\documentclass[12pt]{amsart}

\usepackage{amssymb, amscd, txfonts}
\usepackage{graphicx}
\usepackage[all]{xy} 
\usepackage[mathscr]{euscript}

 
\numberwithin{equation}{section}

\sloppy

\newtheorem{theorem}{Theorem}[section]
\newtheorem{proposition}[theorem]{Proposition}
\newtheorem{lemma}[theorem]{Lemma}
\newtheorem{corollary}[theorem]{Corollary}

\newtheorem{step+}{Step}
\newtheorem{step++}{Step}

\theoremstyle{definition}

\newtheorem{example}[theorem]{Example}

\theoremstyle{remark}
\newtheorem{remark}[theorem]{Remark}
\newtheorem{claim}[theorem]{Claim}

\renewcommand{\hom}{\operatorname{Hom}}
\renewcommand{\ker}{\operatorname{Ker}}

\newcommand{\Z}{\mathbb{Z}}
\newcommand{\Q}{\mathbb{Q}}
\newcommand{\R}{\mathbb{R}}
\newcommand{\C}{\mathbb{C}}

\newcommand{\proj}{{\mathbb P}}

\newcommand{\D}{\mathcal{D}}
\newcommand{\G}{\Gamma}
\newcommand{\E}{\mathcal{E}}
\newcommand{\El}{\mathcal{E}_{\lambda}}
\newcommand{\ElI}{\mathcal{E}_{\lambda}^{I}}
\newcommand{\Ela}{\mathcal{E}_{\lambda(\alpha)}}
\newcommand{\OD}{\mathcal{O}_{\mathcal{D}}}
\newcommand{\LG}{{\rm LG}}

\newcommand{\DI}{\mathcal{D}_{I}}
\newcommand{\VI}{\mathcal{V}_{I}}
\newcommand{\HI}{\mathfrak{H}_{I_{0}}}
\newcommand{\CIo}{\mathcal{C}(I_{0})}
\newcommand{\CIov}{\mathcal{C}(I^{0})}
\newcommand{\CI}{\mathcal{C}(I)}

\newcommand{\Sp}{{\rm Sp}(2g, \mathbb{Q})}
\newcommand{\la}{\lambda(\alpha)}
\newcommand{\Ivl}{I^{0}_{\mathbb{C},\lambda}}
\newcommand{\UIZ}{U(I)_{\mathbb{Z}}}

\newcommand{\GI}{\Gamma(I)}
\newcommand{\GId}{\Gamma(I)'}
\newcommand{\GIbar}{\overline{\Gamma(I)}}
\newcommand{\GIbard}{\overline{\Gamma(I)'}}
\newcommand{\GIl}{\Gamma_{I,\ell}}
\newcommand{\GIh}{\Gamma_{I,h}}
\newcommand{\GIhd}{\Gamma_{I,h}'}
\newcommand{\XI}{\mathcal{X}(I)}
\newcommand{\XIS}{\mathcal{X}(I)^{\Sigma_{I}}}
\newcommand{\YIS}{\mathcal{Y}(I)^{\Sigma_{I}}}
\newcommand{\XIcpt}{\overline{\mathcal{X}(I)}}

\begin{document}

\title[]{Siegel operators for holomorphic differential forms}
\author[]{Shouhei Ma}
\thanks{Supported by KAKENHI 21H00971 and 20H00112} 
\address{Department~of~Mathematics, Tokyo~Institute~of~Technology, Tokyo 152-8551, Japan}
\email{ma@math.titech.ac.jp}
\subjclass[2020]{11F46, 11F55, 11F75}
\keywords{} 

\begin{abstract}
We give a geometric interpretation of the Siegel operators for holomorphic differential forms on Siegel modular varieties. 
This involves extension of the differential forms over a toroidal compactification, 
and we show that the Siegel operator essentially describes 
the restriction and descent to the boundary Kuga variety via holomorphic Leray filtration. 
As a consequence, we obtain equivalence of various notions of ``vanishing at boundary'' 
for holomorphic forms. 
We also study the case of orthogonal modular varieties. 
\end{abstract}

\maketitle

\section{Introduction}\label{sec: intro}

Let $X={\D}/{\G}$ be a Siegel modular variety, 
where ${\G}$ is a neat arithmetic subgroup of ${\Sp}$ with $g>1$ 
and ${\D}$ is the Siegel upper half space of genus $g$. 
By a theorem of Weissauer \cite{We1}, $X$ has holomorphic differential forms only in degrees of the form 
$k(\alpha)=g(g+1)/2-\alpha(\alpha+1)/2$ with $0\leq \alpha \leq g$. 
More specifically, he proved that holomorphic differential forms $\omega$ of degree $k(\alpha)$ 
are the same as (or rather, reduce to) vector-valued Siegel modular forms of weight 
${\la}=((g+1)^{g-\alpha}, (g-\alpha)^{\alpha})$. 

The boundary behavior of $\omega$ is somewhat different between the case $\alpha=0$ and the case $\alpha>0$. 
When $\alpha=0$, namely $\omega$ is a canonical form, it has logarithmic pole at the boundary in general, 
and extendability as a differential form is equivalent to cuspidality as a modular form. 
In \cite{Ma2}, the Siegel operators for canonical forms were interpreted as the residue maps, 
and hence related with the mixed Hodge structure of $X$. 
On the other hand, when $\alpha>0$, Freitag-Pommerening \cite{FP} proved that 
$\omega$ always extends holomorphically over a smooth compactification of $X$. 
Accordingly, the corresponding weight filtration is trivial (\cite{Ma2}). 
The purpose of this paper is nevertheless to investigate a geometric interpretation of 
the Siegel operators for holomorphic differential forms in the case $\alpha>0$. 
It turns out that this still has applications to the cohomology of $X$. 

To be more precise, let $k=k(\alpha)$ with $0< \alpha <g$. 
Let $X_I$ be a cusp of corank $r$ in the Satake compactification $X^{\ast}$ of $X$, 
which corresponds to an isotropic subspace $I$ of ${\Q}^{2g}$ of dimension $r$ up to ${\G}$-equivalence. 
The cusp $X_I$ itself is a Siegel modular variety of genus $g-r$. 
Let $\Phi_{I}$ be the Siegel operator at $X_I$. 
As proved by Weissauer \cite{We1}, 
we have $\Phi_{I}(\omega)=0$ when $r>\alpha$, 
while when $r\leq \alpha$, $\Phi_{I}(\omega)$ is a vector-valued Siegel modular form of weight 
${\la}'=((g+1)^{g-\alpha}, (g-\alpha)^{\alpha-r})$ on $X_I$. 
In what follows, we assume $r\leq \alpha$. 

We choose a smooth projective toroidal compactification $X^{\Sigma}$ of $X$ with simple normal crossing boundary divisor. 
Let $\pi\colon X^{\Sigma}\to X^{\ast}$ be the natural morphism. 
Each irreducible component of the $I$-locus $\pi^{-1}(X_I)\subset X^{\Sigma}$, 
labelled by a suitable cone $\sigma$ and denoted as $\overline{\Delta([ \sigma ])}^{I}$, has the structure 
\begin{equation*}
\overline{\Delta([ \sigma ])}^{I} \stackrel{\pi_{\sigma}}{\longrightarrow } Y_I \stackrel{\pi_I}{\longrightarrow} X_{I}',  
\end{equation*}
where 
$X_{I}'$ is a finite cover of $X_{I}$, 
$\pi_{I}$ is a smooth abelian fibration, and 
$\pi_{\sigma}$ is a smooth projective toric fibration. 
Since the $\pi_{\sigma}$-fibers have no holomorphic forms of positive degree,  
the restriction of $\omega$ to $\overline{\Delta([ \sigma ]) }^{I}$ as a differential form 
is the pullback of a holomorphic $k$-form $\omega_{I}$ on $Y_I$. 
Although the projections $\pi_{\sigma}$ are \textit{not} canonical 
and hence cannot be glued over $\pi^{-1}(X_{I})$ unless $r=1$ (see \S \ref{ssec: full toroidal}), 
we show that $\omega_{I}$ does not depend on $\sigma$. 

Our result says to the effect that $\Phi_{I}(\omega)$ is the descent of $\omega_I$ from $Y_I$ to $X_{I}'$. 
Although the vector bundle $\Omega_{Y_{I}}^{k}$ itself does not descend to $X_{I}'$ unless $r=\alpha$, 
we can do so after breaking it up by the holomorphic Leray filtration 
$L^{\bullet}\Omega_{Y_{I}}^{k} = \pi_{I}^{\ast}\Omega_{X_{I}'}^{\bullet}\wedge \Omega_{Y_{I}}^{k-\bullet}$. 
We have 
\begin{equation*}\label{eqn: Gr hol Leray intro}
{\rm Gr}_{L}^{l}\Omega_{Y_{I}}^{k} \simeq \pi_{I}^{\ast}(\Omega_{X_{I}'}^{l}\otimes \pi_{I \ast}\Omega_{\pi_{I}}^{k-l}) 
\end{equation*}
(see Lemma \ref{lemma: GrL}). 
We denote by ${\E}_{{\la}'}$ the automorphic vector bundle of weight ${\la}'$ on $X_{I}'$. 
Then we can regard $\Phi_{I}(\omega)$ as a section of ${\E}_{{\la}'}$ over $X_{I}'$. 
We can now state our result. 

\begin{theorem}\label{thm: main}
Let $k=k(\alpha)$ with $r \leq \alpha < g$. 
We put $l=k-r(g-\alpha)$. 
There exists an embedding 
$\pi_{I}^{\ast}{\E}_{{\la}'}\hookrightarrow L^{l}\Omega_{Y_{I}}^{k}$
(independent of $\omega$) such that 
the $k$-form $\omega_{I}$ takes values in $\pi_{I}^{\ast}{\E}_{{\la}'}$ and satisfies 
\begin{equation*}\label{eqn: main}
\omega_{I} = \pi_{I}^{\ast} \Phi_I(\omega) 
\end{equation*}
as sections of $\pi_{I}^{\ast}{\E}_{{\la}'}$. 
The projection 
$\pi_{I}^{\ast}{\E}_{{\la}'}\to {\rm Gr}_{L}^{l}\Omega_{Y_{I}}^{k}$ 
is injective and descends to 
${\E}_{{\la}'}\hookrightarrow \Omega_{X_{I}'}^{l}\otimes \pi_{I \ast}\Omega_{\pi_{I}}^{r(g-\alpha)}$ 
over $X_{I}'$.  
\end{theorem}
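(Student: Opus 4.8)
The plan is to reduce everything to a branching computation for the Levi of the parabolic $P_{I}$ stabilizing $I$, carried out on the canonical extensions of the automorphic bundles to the toroidal boundary. Throughout I identify $\omega$ with its vector-valued modular form, i.e. a $\G$-equivariant holomorphic map $f\colon \D\to \mathbb{S}_{\la}V^{\ast}$ into the irreducible $GL_{g}$-module $\mathbb{S}_{\la}V^{\ast}$ that Weissauer exhibits inside $\wedge^{k}\Sym V^{\ast}$, where $V\cong\C^{g}$ is the standard representation of the Levi $GL_{g}$ of the Siegel parabolic. Writing $V=U\oplus W$ with $U=I_{\C}$ of dimension $r$ and $W=(I^{\perp}/I)_{\C}$ of dimension $g-r$, the Levi of $P_{I}$ is $GL_{r}\times {\rm Sp}(2(g-r))$ acting on $U$ and $W$. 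In a toroidal chart adapted to $I$ the cotangent bundle along the boundary decomposes as $\Sym U^{\ast}\oplus(U^{\ast}\otimes W^{\ast})\oplus \Sym W^{\ast}$: the last summand is $\Omega^{1}_{X_{I}'}$, the middle is the relative cotangent (Hodge) bundle $\Omega^{1}_{\pi_{I}}$ of the abelian fibration, and the first is the toric direction, which is exactly why the restriction of $\omega$ annihilates these directions and descends to the $k$-form $\omega_{I}$ on $Y_{I}$. In these terms $L^{\bullet}\Omega^{k}_{Y_{I}}$ is the filtration of $\wedge^{k}(\Sym W^{\ast}\oplus(U^{\ast}\otimes W^{\ast}))$ by the number of factors taken from $\Sym W^{\ast}$.

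The first step is to construct the embedding $\pi_{I}^{\ast}\E_{{\la}'}\hookrightarrow L^{l}\Omega^{k}_{Y_{I}}$ purely representation-theoretically. I would compute the image of $\mathbb{S}_{\la}V^{\ast}$ under the composite $\wedge^{k}\Sym V^{\ast}\to \wedge^{k}(\Sym W^{\ast}\oplus(U^{\ast}\otimes W^{\ast}))\to{\rm Gr}^{\bullet}_{L}$ obtained by dropping the $\Sym U^{\ast}$-summand and passing to the graded of the Leray filtration. Combining the Littlewood--Richardson decomposition of $\mathbb{S}_{\la}(U^{\ast}\oplus W^{\ast})$ with the Cauchy decomposition of $\wedge^{r(g-\alpha)}(U^{\ast}\otimes W^{\ast})$, I expect a distinguished constituent, occurring with multiplicity one, that lands in filtration level exactly $l=k-r(g-\alpha)$ and survives in ${\rm Gr}^{l}_{L}$, namely $(\det U^{\ast})^{g-\alpha}\otimes\mathbb{S}_{{\la}'}W^{\ast}$, where $\mathbb{S}_{{\la}'}W^{\ast}$ is the $GL_{g-r}$-module defining $\E_{{\la}'}$. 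Here the $U^{\ast}$-content is forced to come entirely from the middle summand, assembling to the rectangular $GL_{r}$-character $(\det U^{\ast})^{g-\alpha}$, while consistency of degrees is the identity $l=(g-r)(g-r+1)/2-(\alpha-r)(\alpha-r+1)/2$, i.e. $l$ is precisely the Weissauer degree of weight ${\la}'$ for genus $g-r$. This equivariant inclusion is manifestly independent of $\omega$. Passing to ${\rm Gr}^{l}_{L}$ and using ${\rm Gr}^{l}_{L}\Omega^{k}_{Y_{I}}\simeq \pi_{I}^{\ast}(\Omega^{l}_{X_{I}'}\otimes\pi_{I\ast}\Omega^{r(g-\alpha)}_{\pi_{I}})$ (Lemma \ref{lemma: GrL}), the same map descends to $X_{I}'$ once the fixed line $(\det U^{\ast})^{g-\alpha}$ is identified with the corresponding summand of $\pi_{I\ast}\Omega^{r(g-\alpha)}_{\pi_{I}}=\wedge^{r(g-\alpha)}(U^{\ast}\otimes W^{\ast})$, yielding $\E_{{\la}'}\hookrightarrow\Omega^{l}_{X_{I}'}\otimes\pi_{I\ast}\Omega^{r(g-\alpha)}_{\pi_{I}}$.

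The second step is to identify $\omega_{I}$ with $\pi_{I}^{\ast}\Phi_{I}(\omega)$. For this I would use that the Siegel operator $\Phi_{I}$ is the boundary limit (constant Fourier--Jacobi coefficient) of $f$ followed by the $GL_{r}\times GL_{g-r}$-equivariant projection $\mathbb{S}_{\la}V^{\ast}\to \mathbb{S}_{{\la}'}W^{\ast}$ dual to the branching above. Because $\omega$ extends holomorphically across the boundary (as $\alpha>0$), its restriction to the boundary divisor is computed from the $q\to 0$ limit of the Fourier--Jacobi expansion, which is exactly this constant term, so the higher coefficients die on the boundary. This shows simultaneously that $\omega_{I}$ takes values in the sub-bundle $\pi_{I}^{\ast}\E_{{\la}'}$ just constructed and that it equals $\pi_{I}^{\ast}\Phi_{I}(\omega)$ there; independence of the cone $\sigma$ is automatic, since both the branching and the constant term are intrinsic to $I$ and do not see the chosen ray. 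Finally, injectivity of $\pi_{I}^{\ast}\E_{{\la}'}\to{\rm Gr}^{l}_{L}\Omega^{k}_{Y_{I}}$, and hence of its descent over $X_{I}'$, reduces to checking that a highest-weight vector of $\mathbb{S}_{{\la}'}W^{\ast}$ is not already contained in $L^{l+1}$, which I would verify by a weight argument.

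The main obstacle is the branching/projection step of the second paragraph: pinning down that exactly one Littlewood--Richardson constituent of $\mathbb{S}_{\la}(U^{\ast}\oplus W^{\ast})$ survives in ${\rm Gr}^{l}_{L}$, that it is $(\det U^{\ast})^{g-\alpha}\otimes\mathbb{S}_{{\la}'}W^{\ast}$ rather than a partition with more than $r$ rows in the $U$-factor, and that the projection to the graded piece is injective. The geometry here (toric fibers carrying no positive forms, the abelian fibration, the Leray filtration) is comparatively soft; it is the combinatorics of restricting $\mathbb{S}_{\la}V^{\ast}$ to the Levi and matching the surviving constituent with the Weissauer module $\mathbb{S}_{{\la}'}W^{\ast}$ that carries the real content, together with the bookkeeping that turns the character $(\det U^{\ast})^{g-\alpha}$ into the intended line in $\pi_{I\ast}\Omega^{r(g-\alpha)}_{\pi_{I}}$ over $X_{I}'$.
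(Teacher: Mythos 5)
Your plan follows essentially the same route as the paper: decompose the boundary cotangent bundle via the Siegel domain realization into toric, abelian-fiber and base directions; locate the constituent $(\det U^{\ast})^{g-\alpha}\boxtimes {\C}^{g-r}_{{\la}'}$ of the restriction of ${\C}^{g}_{{\la}}$ to the Levi; read off the Leray level $l=k-r(g-\alpha)$ from the $GL(U)$-determinant character; identify the boundary value with the constant Fourier term; and descend to $X_I'$ by translation-invariance of forms on the abelian fibers. The differences are in how the branching step is executed, and here the paper's route is worth noting because it sidesteps the two points you flag as the main obstacle. First, instead of decomposing $\wedge^{k}{\rm Sym}(U^{\ast}\oplus W^{\ast})$ by Littlewood--Richardson/Cauchy and hunting for a multiplicity-one constituent, the paper first uses Weissauer's result on Fourier coefficients supported on $\overline{\mathcal{C}(I^{\perp})}$ to show that the boundary value lies in the $U(I,I_0)$-invariant subspace $({\C}^{g}_{{\la}})^{U}\simeq (\det U^{\ast})^{g-\alpha}\boxtimes{\C}^{g-r}_{{\la}'}$ of the \emph{irreducible} module ${\C}^{g}_{{\la}}$, where no multiplicity bookkeeping is needed; it then exhibits the explicit highest weight vector $e_{{\la}}=\bigwedge_{i\leq j,\, i\leq g-\alpha}e_{i}e_{j}$, observes that every factor $e_ie_j$ has $e_i\in W^{\ast}$ so that $e_{{\la}}\in\wedge^{k}(W^{\ast}\cdot V^{\ast})$, and concludes by parabolic-stability that the whole invariant part avoids the ${\rm Sym}\,U^{\ast}$ directions (Proposition \ref{prop: ElaI initial filter}); finally the Leray filtration is identified with the filtration by the weight of the scalar torus of ${\rm GL}(r)$, so pure level and injectivity into ${\rm Gr}^{l}_{L}$ come for free from the fact that $(\det U^{\ast})^{g-\alpha}$ has pure weight $r(g-\alpha)$ (Lemma \ref{prop: Leray level}), replacing your separate injectivity check. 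Second, a caution about your construction: the map ``drop the ${\rm Sym}\,U^{\ast}$ summand'' is only Levi-equivariant, and the \emph{image} of ${\C}^{g}_{{\la}}$ under it is not the object the theorem needs --- you must show the invariant subspace is \emph{contained} in the subspace $\wedge^{k}(W^{\ast}\cdot V^{\ast})$ of $\wedge^{k}{\rm Sym}\,V^{\ast}$, which is precisely what the highest-weight-vector argument delivers and what your ``the $U^{\ast}$-content is forced to come from the middle summand'' elides. Likewise, the statement that the boundary restriction is the constant Fourier term and is independent of $\sigma$ is not automatic for $r>1$: it rests on the computation that for an $I$-cone $\sigma$ one has $\sigma^{\perp}\cap\overline{\mathcal{C}(I^{0})}=\overline{\mathcal{C}(I^{\perp})}$ (Lemma \ref{lem: MF at boundary} and Proposition \ref{prop: Siegel operator}), which is where the paper actually proves both claims at once. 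With these two points supplied, your outline closes up into the paper's proof.
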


Interpreted more geometrically, this says that $\omega_{I}$ has pure Leray level $k-r(g-\alpha)$ and 
$\Phi_I(\omega)$ is the descent of $\omega_{I}$ as a section of ${\rm Gr}_{L}^{l}\Omega_{Y_{I}}^{k}$. 
Note that the Leray level $l=k-r(g-\alpha)$ can also be written as 
\begin{equation*}
l = (g-r)(g-r+1)/2 - (\alpha-r)(\alpha-r+1)/2, 
\end{equation*}
namely the Weissauer degree of corank $\alpha-r$ in genus $g-r$. 

When $r=\alpha$, we have $k=\dim Y_{I}$. 
Then $K_{Y_I}\simeq \pi_{I}^{\ast}(K_{X_{I}'}\otimes \pi_{I\ast}K_{\pi_{I}})$, 
and $K_{X_{I}'}\otimes \pi_{I\ast}K_{\pi_{I}}$ is the line bundle of scalar-valued modular forms of weight $(g-r+1)+r=g+1$ on $X_{I}'$. 
In this case, Theorem \ref{thm: main} simply says that 
$\Phi_{I}(\omega)$ is the cusp form of weight $g+1$ 
corresponding to $\omega_{I}$ by this isomorphism. 
When $r=\alpha =1$, this was essentially observed by Weissauer (\cite{We2} p.309). 
Theorem \ref{thm: main} is a generalization of his observation. 
Holomorphic Leray filtration arises in the case $r<\alpha$, 
via which the descent to $X_{I}'$ can be understood more geometrically. 

In the course of the proof of Theorem \ref{thm: main}, 
we also obtain a new proof of the Freitag-Pommerening extension theorem (Remark \ref{remark: FP new proof}). 
In fact, the present study grew out of an effort to understand the theorem of Freitag-Pommerening, 
and can be regarded as a refinement of 
the intersection of the Freitag-Pommerening theorem and Weissauer's theory. 


A natural consequence of Theorem \ref{thm: main} is 
the following equivalence of various notions of ``vanishing at boundary" for $\omega$:  

\begin{corollary}\label{cor: intro}
The following conditions for $\omega$ are equivalent:

(1) As a modular form, $\omega$ is a cusp form. 

(2) As a differential form, the restriction of $\omega$ to every irreducible component of $X^{\Sigma}-X$ vanishes. 

(3) As a cohomology class, the restriction of $\omega$ to the Borel-Serre boundary vanishes. 
\end{corollary}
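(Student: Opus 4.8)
The plan is to establish $(1)\Leftrightarrow(2)$ directly from Theorem \ref{thm: main} (handling the deep cusps by a dimension count), and then $(2)\Leftrightarrow(3)$ by a Hodge-theoretic comparison carried out on the smooth projective model $X^{\Sigma}$. Throughout I use that, by Freitag--Pommerening, $\omega$ extends to a holomorphic, hence closed and harmonic, $k$-form on the compact Kähler manifold $X^{\Sigma}$, so that $[\omega]\in H^{k}(X^{\Sigma})$ is defined and is pure of weight $k$ and of Hodge type $(k,0)$. Write $D=X^{\Sigma}-X=\bigcup_{i}D_{i}$ for the (SNC) boundary, with inclusions $j\colon X\hookrightarrow X^{\Sigma}$ and $i\colon D\hookrightarrow X^{\Sigma}$, and note that $[\omega|_{X}]=j^{\ast}[\omega]$ is the de Rham class appearing in (3).

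First I would prove $(1)\Leftrightarrow(2)$. Each irreducible boundary divisor $D_{i}$ is one of the components $\overline{\Delta([\sigma])}^{I}$ lying over a corank-$r$ cusp $X_{I}$, with $\sigma$ a ray. Recall that $\omega$ is a cusp form if and only if $\Phi_{I}(\omega)=0$ for every cusp $I$ of corank $\geq 1$, and that these vanishings are automatic when $r>\alpha$ by Weissauer's theorem. When $r\leq\alpha$, Theorem \ref{thm: main} gives $\omega|_{D_{i}}=\pi_{\sigma}^{\ast}\pi_{I}^{\ast}\Phi_{I}(\omega)$; since $\pi_{\sigma}$ and $\pi_{I}$ are surjective fibrations, pullback is injective on forms and $\omega|_{D_{i}}=0\iff\Phi_{I}(\omega)=0$. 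When $r>\alpha$ I would invoke the dimension identity $\dim Y_{I}=(g-r)(g+r+1)/2$, which is strictly decreasing in $r$ and equals $k$ at $r=\alpha$; hence $\dim Y_{I}<k$, so $Y_{I}$ carries no nonzero holomorphic $k$-form, and $\omega|_{D_{i}}=\pi_{\sigma}^{\ast}\omega_{I}$ vanishes. Collecting these equivalences over all $D_{i}$, the vanishing of $\omega$ on every boundary divisor matches the vanishing of all $\Phi_{I}(\omega)$, i.e. $(2)\Leftrightarrow(1)$.

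Next I would prove $(2)\Leftrightarrow(3)$. I first identify (3) with $j^{\ast}[\omega]\in H^{k}_{!}(X)$, where interior cohomology $H^{k}_{!}(X)=\mathrm{im}(H^{k}_{c}(X)\to H^{k}(X))$ equals $\ker(H^{k}(X)\to H^{k}(\partial_{\mathrm{BS}}))$ by the homotopy equivalence $X\simeq\overline{X}^{\mathrm{BS}}$ and the long exact sequence of the pair $(\overline{X}^{\mathrm{BS}},\partial_{\mathrm{BS}})$; this in turn equals $j^{\ast}\ker\bigl(i^{\ast}\colon H^{k}(X^{\Sigma})\to H^{k}(D)\bigr)$. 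For $(2)\Rightarrow(3)$: by Hodge theory on each compact Kähler $D_{i}$, condition (2) gives $[\omega|_{D_{i}}]=0$ in $H^{k}(D_{i})$, so $i^{\ast}[\omega]$ lies in $\ker\bigl(H^{k}(D)\to\bigoplus_{i}H^{k}(D_{i})\bigr)=W_{k-1}H^{k}(D)$; since $[\omega]$ is pure of weight $k$, strictness of $i^{\ast}$ as a morphism of mixed Hodge structures forces $i^{\ast}[\omega]=0$, whence $[\omega]$ lifts to $H^{k}(X^{\Sigma},D)=H^{k}_{c}(X)$ and $j^{\ast}[\omega]\in H^{k}_{!}(X)$. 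For $(3)\Rightarrow(2)$: writing $[\omega]=\eta+\zeta$ with $i^{\ast}\eta=0$ and $\zeta\in\ker j^{\ast}=\mathrm{im}(\text{Gysin})$, restriction to a component gives $\iota_{j}^{\ast}[\omega]=\iota_{j}^{\ast}\zeta$, which has no $(k,0)$-part because Gysin images are supported in Hodge bidegrees with both indices $\geq 1$; but $\iota_{j}^{\ast}[\omega]=[\omega|_{D_{j}}]$ is of type $(k,0)$, so $[\omega|_{D_{j}}]=0$ and hence $\omega|_{D_{j}}=0$, giving (2).

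I expect the main obstacle to be the step $(2)\Leftrightarrow(3)$, and within it the Hodge-theoretic bookkeeping: passing between ``vanishing of $\omega$ as a differential form on each divisor'' and ``vanishing of the cohomological restriction to the Borel--Serre boundary'' relies on the purity of $[\omega]$, the strictness of morphisms of mixed Hodge structures, and the disjointness of $(k,0)$-classes from Gysin images, together with the standard (but nontrivial) identification of the Borel--Serre boundary restriction with interior cohomology. By contrast, $(1)\Leftrightarrow(2)$ is essentially a formal consequence of Theorem \ref{thm: main} once the deep cusps $r>\alpha$ are dispatched by the dimension count $\dim Y_{I}<k$.
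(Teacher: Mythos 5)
Your proposal is correct. The first half, $(1)\Leftrightarrow(2)$, is essentially the paper's own argument (Corollary \ref{cor: cuspidality}): for $r\leq\alpha$ one uses $\omega_I=\pi_I^{\ast}\Phi_I(\omega)$ from Theorem \ref{thm: main} together with injectivity of pullback along the surjective fibrations, and for $r>\alpha$ both sides vanish for the same reasons you give (the paper uses the identical dimension count $\dim Y_I<k$, alongside the corank criterion for $\Phi_I(\omega)=0$). For $(2)\Leftrightarrow(3)$ you take a more direct route than the paper: the paper proves $(1)\Leftrightarrow(3)$ by first constructing the corank filtration $V_{\Sigma}^{(\bullet)}$ on $W_kH^k(X)$, identifying $V_{\Sigma}^{(0)}=H^k_{!}(X)$ (Proposition \ref{prop: V1 = H!}) and $F^kV_{\Sigma}^{(0)}=S_{\lambda(\alpha)}(\Gamma)$ (Proposition \ref{prop: Hodge corank filtration}), and then concludes via $F^kH^k_{!}(X)\simeq S_{\lambda(\alpha)}(\Gamma)$ (Corollaries \ref{cor: cusp form Hodge} and \ref{cor: cuspidality Borel-Serre}). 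Your $(2)\Rightarrow(3)$ is in substance the same computation as Proposition \ref{prop: V1 = H!} (the identifications $H^k_c(X)=H^k(X^{\Sigma},D)$ and $\ker\nu^{\ast}=W_{k-1}H^k(D)$ plus strictness), specialized to the single class $[\omega]$; your $(3)\Rightarrow(2)$ replaces the paper's $F^k$-bookkeeping of the corank filtration by the observation that $\ker j^{\ast}$ is spanned by Gysin images, whose Hodge types have both indices $\geq 1$, so the $(k,0)$-component of $\iota_j^{\ast}[\omega]$ is unambiguous, and then harmonicity on the smooth projective $D_j$ upgrades $[\omega|_{D_j}]=0$ to $\omega|_{D_j}=0$. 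What your route buys is a self-contained proof of $(2)\Leftrightarrow(3)$ that never mentions modular forms; what the paper's route buys is the finer structure (the filtration $V_{\Sigma}^{(r)}$ interpolating $H^k_{!}$ and $W_kH^k$) that it wants for its own sake in \S\ref{sec: corank}. One small point of hygiene: when you write $\omega|_{D_i}=\pi_{\sigma}^{\ast}\omega_I$ for $r>\alpha$, this is not literally furnished by Theorem \ref{thm: main} (stated only for $r\leq\alpha$); it follows from Lemma \ref{lem: descend section to VI} together with the vanishing of all boundary Fourier coefficients when the corank of the weight is $<r$ (Proposition \ref{prop: Fourier U-invariant}), which is how the paper justifies the same step.
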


Here Theorem \ref{thm: main} is used to deduce $(1) \Leftrightarrow (2)$ (Corollary \ref{cor: cuspidality}), 
while $(2) \Leftrightarrow (3)$ follows from a general argument in mixed Hodge theory (Corollary \ref{cor: cuspidality Borel-Serre}). 


For the proof of Theorem \ref{thm: main}, 
we need to lift the Siegel operator to the level of toroidal compactifications. 
This is done in \S \ref{sec: Siegel} in the full generality of vector-valued modular forms of arbitrary weight. 
Theorem \ref{thm: main} is deduced in \S \ref{sec: Omega} 
by combining the result of \S \ref{sec: Siegel} for the weight $\lambda(\alpha)$ and 
consideration of filtrations arising from the Siegel domain realization. 
Some applications to the cohomology of $X$ are given in \S \ref{sec: corank}. 
In \S \ref{sec: orthogonal}, we prove a similar (and simpler) result for orthogonal modular varieties. 

The following notation will be used throughout this paper. 
For a $K$-linear space $V$ of finite dimension, 
its dual space $\hom_{K}(V, K)$ is denoted by $V^{\vee}$. 
For a field extension $K'/K$, we write $V_{K'}=V\otimes_{K}K'$. 
We denote by $S^{2}V={\rm Sym}^{2}V$ the symmetric square of $V$. 
When $K\subset {\R}$, we write $\mathcal{C}(V)=\mathcal{C}(V_{{\R}})$ for the cone of positive-definite forms in $S^{2}V_{{\R}}$.

\section{Toroidal compactifications and automorphic vector bundles}\label{sec: toroidal}

In this section we recall the basic theory of 
toroidal compactifications of Siegel modular varieties and vector-valued Siegel modular forms, 
following more or less \cite{AMRT}, \cite{Mu}, \cite{HZ1}, \cite{vdG} 
but in a unified and somewhat self-contained manner.

\subsection{Siegel modular varieties}\label{ssec: modular variety}

Let $g>1$. 
We equip ${\Q}^{2g}$ with the standard symplectic form $(\cdot , \cdot )$. 
Let ${\rm LG}(g)$ be the Lagrangian Grassmannian parametrizing maximal isotropic subspaces $V$ of ${\C}^{2g}$. 
We define the Hermitian symmetric domain ${\D}$ as the open set of ${\rm LG}(g)$ parametrizing those $[V]$ such that  
the Hermitian form $\sqrt{-1}(\cdot , \bar{\cdot})|_{V}$ on $V$ is positive-definite. 
We have $\dim {\D} = g(g+1)/2$. 

Rational boundary components (cusps) of ${\D}$ correspond to isotropic subspaces of ${\Q}^{2g}$. 
If $I\subset {\Q}^{2g}$ is an isotropic subspace, 
the corresponding cusp ${\DI}$ 
can be identified with the Hermitian symmetric domain for the symplectic space $V(I)=I^{\perp}/I$. 
In particular, a maximal isotropic subspace $I_{0}$ corresponds to  
the $0$-dimensional cusp $[I_{0,{\C}}]\in {\rm LG}(g)$.  
If we choose a complementary isotropic subspace $I_{0}'\subset {\Q}^{2g}$, 
the graph construction with respect to $I_{0}, I_{0}'$ gives a birational map 
${\rm LG}(g)\dashrightarrow S^{2}I_{0,{\C}}$. 
This realizes ${\D}$ as the Siegel upper half space 
\begin{equation*}
{\HI}=\{ \: Z\in S^{2}I_{0,{\C}}\: | \: {\rm Im} (Z) \in {\CIo} \: \}.  
\end{equation*} 

Let ${\G}$ be an arithmetic subgroup of ${\rm Sp}(2g, {\Q})$. 
We assume that ${\G}$ is neat. 
The quotient $X={\D}/{\G}$ has the structure of a smooth quasi-projective variety, 
embedded as a Zariski open set in the Satake compactification 
$X^{\ast} = {\D}^{\ast}/{\G}$ where ${\D}^{\ast}$ is the union of ${\D}$ and its cusps equipped with the Satake topology. 
We denote by $X_I$ the $I$-stratum in $X^{\ast}$,  
and also call it the \textit{$I$-cusp}.  
We call $\dim I$ the \textit{corank} of $X_{I}$.

\subsection{Automorphic vector bundles}\label{ssec: automorphic VB}

Let ${\E}$ be the \textit{Hodge bundle} on ${\D}$, 
namely the universal sub vector bundle whose fiber over $[V]\in {\D}$ is $V$ itself. 
Let $\lambda=(\lambda_1 \geq \cdots \geq \lambda_{g}\geq 0)$ be a partition of length $\leq g$ 
(a highest weight for ${\rm GL}(g, {\C})$). 
The Schur functor associated to $\lambda$ produces from ${\E}$ an equivariant vector bundle ${\El}$ on ${\D}$. 
This is called the \textit{automorphic vector bundle} of weight $\lambda$.  
A ${\G}$-invariant section of ${\El}$ over ${\D}$ is called a \textit{modular form} of weight $\lambda$ with respect to ${\G}$. 
We denote by $M_{\lambda}({\G})$ the space of such modular forms. 

We choose a maximal isotropic subspace $I_0$ of ${\Q}^{2g}$ 
and write $I^{0}=I_{0}^{\vee}$. 
Since the pairing between $I_{0,{\C}}$ and $V$ is nondegenerate for $[V]\in {\D}$, 
this defines an isomorphism ${\E}\to I^{0}_{{\C}}\otimes {\OD}$. 
This induces an isomorphism ${\El}\to {\Ivl}\otimes {\OD}$ for every $\lambda$, 
where ${\Ivl}$ is the space obtained by applying the $\lambda$-Schur functor to $I^{0}_{{\C}}$. 
We call this isomorphism the \textit{$I_{0}$-trivialization} of ${\El}$. 

\begin{example}\label{ex: I-trivialization TD}
The tube domain realization ${\D}\simeq \frak{H}_{I_{0}}\subset S^{2}I_{0,{\C}}$ 
at $I_0$ induces an isomorphism 
$T_{{\D}} \simeq S^{2}I_{0,{\C}}\otimes {\OD}$. 
This coincides with the $I_0$-trivialization of $T_{{\D}}$. 
\end{example}

Via the $I_{0}$-trivialization and the tube domain realization at $I_{0}$, 
a ${\G}$-modular form of weight $\lambda$ is identified with a ${\Ivl}$-valued holomorphic function on ${\HI}$. 
It is invariant under the translation by a full lattice $U(I_{0})_{{\Z}}$ in $S^2I_{0}$, 
and so admits a Fourier expansion of the form 
\begin{equation}\label{eqn: Fourier expansion}
f(Z) = \sum_{l\in U(I_0)_{{\Z}}^{\vee}} a(l)q^{l}, \qquad Z\in {\HI}, 
\end{equation}
where $a(l)\in {\Ivl}$, $q^{l}=\exp(2\pi i(l, Z))$, 
and $U(I_0)_{{\Z}}^{\vee}\subset S^{2}I^{0}$ is the dual lattice of $U(I_0)_{{\Z}}$. 
The Koecher principle says that $a(l)\ne 0$ only when $l\in \overline{{\CIov}}$.  
The modular form $f$ is called a \textit{cusp form} if $a(l)\ne 0$ only when $l\in {\CIov}$ 
at every $0$-dimensional cusp $I_0$. 
We denote by $S_{\lambda}({\G})\subset M_{\lambda}({\G})$ the subspace of cusp forms. 

When $I_{0}$ is the standard isotropic subspace ${\Q}^{g}\subset {\Q}^{2g}$, 
this recovers the traditional definition of Siegel modular forms. 
Since we prefer to treat all $0$-dimensional cusps equally, 
we do not normalize $I_0$ to the standard one.

\subsection{Siegel domain realization}\label{ssec: Siegel domain}

Let $I$ be an isotropic subspace of ${\Q}^{2g}$ of dimension $r$. 
Let ${\LG}(V(I))\simeq {\LG}(g-r)$ be the Lagrangian Grassmannian for $V(I)=I^{\perp}/I$. 
We denote by ${\LG}^{\circ}(I^{\perp})$ the parameter space of $(g-r)$-dimensional isotropic subspaces $W$ 
of $I^{\perp}_{{\C}}$ satisfying $W\cap I_{{\C}}= \{ 0 \}$. 
We have the natural projections 
\begin{equation*}\label{eqn: Siegel domain LG}
{\LG}(g) \dashrightarrow {\LG}^{\circ}(I^{\perp}) \dashrightarrow {\LG}(V(I)), \quad 
V\mapsto W=V\cap I^{\perp}_{{\C}}\mapsto [W], 
\end{equation*}
where $[W]$ is the image of $W\subset I^{\perp}_{{\C}}$ in $V(I)_{{\C}}$. 
Restricting this to open subsets gives the extended two-step fibration 
\begin{equation}\label{eqn: Siegel domain}
{\D}\subset {\D}(I) \stackrel{\pi_{1}}{\to} {\VI} \stackrel{\pi_{2}}{\to} {\DI}, 
\end{equation}
where 
${\VI}$ is an affine space bundle over ${\DI}$ isomorphic to $I_{{\C}}\otimes {\E}'$ 
with ${\E}'$ the Hodge bundle on ${\DI}$, 
${\D}(I)$ is a principal $S^2I_{{\C}}$-bundle over ${\VI}$,  
and inside which ${\D}$ is a fibration of Siegel upper half spaces $\frak{H}_{I}$ over ${\VI}$. 
This is the Siegel domain realization of ${\D}$ with respect to $I$. 
If we choose a maximal isotropic subspace $I_0\subset {\Q}^{2g}$ containing $I$, 
the tube domain realization at $I_{0}$ identifies \eqref{eqn: Siegel domain} with a restriction of the linear projection 
\begin{equation*}
S^{2}I_{0,{\C}} \to S^{2}I_{0,{\C}}/S^{2}I_{{\C}} \to S^{2}(I_{0}/I)_{{\C}} 
\end{equation*}
to open subsets. 

Let $P(I)$ be the stabilizer of $I$ in ${\Sp}$. 
We define a filtration 
\begin{equation}\label{eqn: filtration Q}
U(I)\lhd W(I) \lhd P(I)' \lhd P(I) 
\end{equation}
by 
\begin{equation*}
P(I)' = \ker (P(I) \to {\rm GL}(I)), 
\end{equation*}
\begin{equation*}\label{eqn: W(I)}
W(I) = \ker (P(I) \to {\rm GL}(I)\times {\rm Sp}(V(I))), 
\end{equation*}
\begin{equation*}\label{eqn: U(I)}
U(I) = \ker (P(I) \to {\rm GL}(I^{\perp})). 
\end{equation*}
Then $W(I)$ is the unipotent radical of $P(I)$ and $U(I)$ is the center of $W(I)$. 
More specifically, we have a canonical isomorphism $U(I)\simeq S^2I$, and 
$W(I)$ has the structure of a Heisenberg group: 
\begin{equation*}
0 \to U(I) \to W(I) \to I\otimes V(I) \to 0. 
\end{equation*}
For $K={\R}, {\C}$, we define $P(I)_{K}, W(I)_{K}, U(I)_{K}$ similarly. 
Then ${\D}(I)\to {\VI}$ is a principal $U(I)_{{\C}}$-bundle, 
and $W(I)_{{\R}}/U(I)_{{\R}}\simeq I_{{\R}}\otimes V(I)_{{\R}}$ 
acts on each fiber of ${\VI}\to {\DI}$ freely and transitively as translation. 

We take the intersection of \eqref{eqn: filtration Q} with ${\G}$ and denote it by 
\begin{equation}\label{eqn: filtration Z}
{\UIZ} \lhd W(I)_{{\Z}} \lhd {\GId} \lhd {\GI}. 
\end{equation}
Then we take the quotient of \eqref{eqn: filtration Z} by ${\UIZ}$ and denote it by 
\begin{equation*}
\{ 0 \} \lhd \overline{W(I)}_{{\Z}} \lhd {\GIbard} \lhd {\GIbar}. 
\end{equation*}
In particular, $\overline{W(I)}_{{\Z}}$ is a full lattice in $I\otimes V(I)$. 
We will also consider the following (neat) arithmetic groups:  
\begin{equation*}
{\GIh} = {\rm Im}({\GI} \to {\rm Sp}(V(I))), \quad 
{\GIhd} = {\rm Im}({\GId} \to {\rm Sp}(V(I))), 
\end{equation*}
\begin{equation*}
{\GIl} = {\rm Im}({\GI} \to {\rm GL}(I)). 
\end{equation*}
If we denote by ${\G}_{I}$ the image of ${\GI}\to {\rm GL}(I)\times {\rm Sp}(V(I))$, 
then ${\GIhd}$ is the intersection ${\G}_{I}\cap {\rm Sp}(V(I))$, 
while ${\GIh}$ is the image of the projection ${\G}_{I}\to {\rm Sp}(V(I))$. 
This convinces us that ${\GIh}\ne {\GIhd}$ in general. 

We choose a $0$-dimensional cusp $I_0\supset I$. 
Let $j(\gamma, x)$ be the factor of automorphy for the ${\rm Sp}(2g, {\R})$-action on ${\El}$ with respect to the $I_0$-trivialization. 
This is the ${\rm GL}({\Ivl})$-valued function on ${\rm Sp}(2g, {\R})\times {\D}$ defined by the composition 
\begin{equation*}
{\Ivl} \to ({\El})_{x} \stackrel{\gamma}{\to} ({\El})_{\gamma x} \to {\Ivl},  
\end{equation*}
where the first and the last maps are the $I_{0}$-trivialization at $x$ and $\gamma x$ respectively. 
When $\gamma$ stabilizes $I_{0,{\R}}$, this coincides with the $\gamma$-action on ${\Ivl}$. 
The following property will be used later. 

\begin{lemma}\label{lem: f.a. constant} 
If $\pi_{1}(x)=\pi_{1}(x')$, then $j(\gamma, x)=j(\gamma, x')$ for every $\gamma\in P(I)_{{\R}}$. 
\end{lemma}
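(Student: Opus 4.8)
The plan is to reduce everything to the Hodge bundle $\E$ (the case $\lambda=(1)$) and then carry out a short linear-algebra computation using the tube domain realization. Since the $I_{0}$-trivialization of $\El$ and the $\gamma$-action on $\El$ are both obtained from the corresponding data on $\E$ by applying the $\lambda$-Schur functor $\mathbf{S}_{\lambda}$, the factor of automorphy on $\El$ is $j(\gamma,x)=\mathbf{S}_{\lambda}(j_{1}(\gamma,x))$, where $j_{1}(\gamma,x)\in \mathrm{GL}(I^{0}_{{\C}})$ denotes the factor of automorphy on $\E$. Thus it suffices to prove the assertion for $\E$: once $j_{1}(\gamma,x)=j_{1}(\gamma,x')$ is established, applying $\mathbf{S}_{\lambda}$ yields the general case.

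Next I would unwind the definition of $j_{1}(\gamma,x)$. Writing $x=[V]$, the $I_{0}$-trivialization $t_{x}\colon V\to I^{0}_{{\C}}$ sends $v\mapsto (\cdot,v)|_{I_{0,{\C}}}$, so for $\phi\in I^{0}_{{\C}}$ the vector $v_{x}:=t_{x}^{-1}(\phi)\in V$ is the unique element with $(u,v_{x})=\phi(u)$ for all $u\in I_{0,{\C}}$. By definition $j_{1}(\gamma,x)(\phi)=t_{\gamma x}(\gamma v_{x})$, and evaluating at $u\in I_{0,{\C}}$ gives
\begin{equation*}
j_{1}(\gamma,x)(\phi)(u)=(u,\gamma v_{x})=(\gamma^{-1}u,v_{x}).
\end{equation*}
Hence, writing $x'=[V']$, the identity $j_{1}(\gamma,x)=j_{1}(\gamma,x')$ is equivalent to $(\gamma^{-1}u,\,v_{x}-v_{x'})=0$ for all $u\in I_{0,{\C}}$ and all $\phi$. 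Since $v_{x}$ and $v_{x'}$ pair to the same $\phi$ on $I_{0,{\C}}$, their difference is symplectically orthogonal to $I_{0,{\C}}$, hence lies in $I_{0,{\C}}$ because $I_{0}$ is maximal isotropic.

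The crux is to upgrade this to $v_{x}-v_{x'}\in I_{{\C}}$ using the hypothesis $\pi_{1}(x)=\pi_{1}(x')$; this is the step I expect to be the main obstacle, as it is exactly where the geometric meaning of $\pi_{1}$ must be translated into linear algebra. For this I would use the tube domain realization at $I_{0}$, which presents $V$ as the graph of its coordinate $Z_{x}\in S^{2}I_{0,{\C}}$, viewed as a map $I_{0,{\C}}'\to I_{0,{\C}}$. A direct computation then shows $v_{x}=Z_{x}(f_{\phi})+f_{\phi}$, where $f_{\phi}\in I_{0,{\C}}'$ is the element representing $\phi$ under the pairing (independent of $x$); in particular $v_{x}-v_{x'}=(Z_{x}-Z_{x'})(f_{\phi})$. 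Now the excerpt identifies $\pi_{1}$ with the linear projection $S^{2}I_{0,{\C}}\to S^{2}I_{0,{\C}}/S^{2}I_{{\C}}$, so $\pi_{1}(x)=\pi_{1}(x')$ forces $Z_{x}-Z_{x'}\in S^{2}I_{{\C}}$. An element of $S^{2}I_{{\C}}$, regarded as a map $I_{0,{\C}}'\to I_{0,{\C}}$, has image contained in $I_{{\C}}$, whence $v_{x}-v_{x'}\in I_{{\C}}$.

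Finally I would conclude. For $\gamma\in P(I)_{{\R}}$ we have $\gamma(v_{x}-v_{x'})\in I_{{\C}}\subset I_{0,{\C}}$ because $\gamma$ stabilizes $I$; as $u\in I_{0,{\C}}$ and $I_{0}$ is isotropic, this gives $(\gamma^{-1}u,v_{x}-v_{x'})=(u,\gamma(v_{x}-v_{x'}))=0$. Therefore $j_{1}(\gamma,x)=j_{1}(\gamma,x')$, and applying $\mathbf{S}_{\lambda}$ finishes the proof. The only delicate point throughout is the passage from $v_{x}-v_{x'}\in I_{0,{\C}}$ to $v_{x}-v_{x'}\in I_{{\C}}$, which is precisely where the hypothesis $\pi_{1}(x)=\pi_{1}(x')$ enters; everything else is formal.
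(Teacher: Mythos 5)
Your proof is correct, but it takes a genuinely different route from the paper's. The paper argues group-theoretically and without coordinates: since ${\D}(I)\to{\VI}$ is a principal $U(I)_{{\C}}$-bundle, it writes $x'=\gamma_{0}x$ with $\gamma_{0}\in U(I)_{{\C}}$, observes that $U(I)_{{\C}}$ acts trivially on $I_{0}$ (because $I_{0}\subset I^{\perp}$) so that $j(\gamma_{0},\cdot)\equiv\mathrm{id}$, and then concludes in two lines from the cocycle relation together with the normality of $U(I)_{{\C}}$ in $U(I)_{{\C}}\cdot P(I)_{{\R}}$ --- directly for arbitrary $\lambda$. You instead reduce to the Hodge bundle via functoriality of the Schur functor and verify the identity by an explicit computation in the tube domain realization, showing $v_{x}-v_{x'}=(Z_{x}-Z_{x'})(f_{\phi})\in I_{{\C}}$ and then using that $P(I)_{{\R}}$ stabilizes $I$ while $I_{0}$ is isotropic. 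The two arguments rest on the same underlying facts: your containment $v_{x}-v_{x'}\in I_{{\C}}$ is exactly the concrete trace of the paper's statement that $x$ and $x'$ differ by an element of $U(I)_{{\C}}$ acting trivially on $I_{0,{\C}}$. What the paper's version buys is brevity and the avoidance of any choice of complementary Lagrangian $I_{0}'$; what yours buys is that the role of the hypothesis $\pi_{1}(x)=\pi_{1}(x')$ is made completely explicit at the level of period matrices, and the argument is self-contained linear algebra. All steps in your write-up check out (in particular $v_{x}=Z_{x}(f_{\phi})+f_{\phi}$ is valid on all of ${\D}$ since $V\cap I_{0,{\C}}=0$ there), so there is no gap.
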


\begin{proof}
The vector bundle ${\El}$ is naturally defined over ${\D}(I)\subset {\LG}(g)$ 
and is equivariant with respect to $U(I)_{{\C}}\cdot P(I)_{{\R}}$. 
We may write $x'=\gamma_0x$ for some $\gamma_0\in U(I)_{{\C}}$. 
Since $I_{0}\subset I^{\perp}$, $U(I)_{{\C}}$ acts on $I_0$ trivially, so we have 
$j(\gamma_{0}, \cdot) \equiv {\rm id}$ for any $\gamma_{0}\in U(I)_{{\C}}$. 
Using this property, we can calculate 
\begin{eqnarray*}
j(\gamma, \gamma_0x) 
& = & j(\gamma \gamma_0, x) \circ j(\gamma_0, x)^{-1} \; = \; j(\gamma\gamma_0, x) \\ 
& = & j(\gamma \gamma_0 \gamma^{-1}, \gamma x)\circ j(\gamma, x) \; = \; j(\gamma, x). 
\end{eqnarray*}
In the last equality we used 
$\gamma \gamma_0 \gamma^{-1} \in U(I)_{{\C}}$. 
\end{proof}

\subsection{Partial toroidal compactification}\label{ssec: partial toroidal}

We take the quotients 
\begin{equation*}
{\XI}={\D}/{\UIZ}, \quad \mathcal{T}(I)={\D}(I)/{\UIZ}. 
\end{equation*}
Let $T(I)$ be the algebraic torus $U(I)_{{\C}}/{\UIZ}$. 
Then $\mathcal{T}(I)$ is a principal $T(I)$-bundle over ${\VI}$. 
Let ${\CI}^{\ast}\subset S^2I_{{\R}}$ be the cone of positive-semidefinite forms with rational kernel. 
This is the union of ${\CI}$ with the boundary components $\mathcal{C}(J)$ for all $J\subset I$ (including $J=\{ 0 \}$ by convention). 
We take a ${\GIl}$-admissible rational polyhedral cone decomposition $\Sigma_{I}$ of ${\CI}^{\ast}$. 
This defines a torus embedding $T(I)\hookrightarrow T(I)^{\Sigma_{I}}$ 
and hence a relative torus embedding $\mathcal{T}(I)\hookrightarrow \mathcal{T}(I)^{\Sigma_{I}}$. 
Then let ${\XI}^{\Sigma_{I}}$ be the interior of the closure of ${\XI}$ in $\mathcal{T}(I)^{\Sigma_{I}}$. 
This is the same as taking relatively the interior of the closure of each fiber of ${\XI}\to {\VI}$ in $T(I)^{\Sigma_{I}}$. 
Then \eqref{eqn: Siegel domain} descends and extends to the two-step fibration 
\begin{equation}\label{eqn: Siegel domain extended}
{\XIS} \to {\VI} \to {\DI}. 
\end{equation}
When $\Sigma_{I}$ is \textit{regular}, namely every cone $\sigma\in \Sigma_{I}$ is generated by a part of a ${\Z}$-basis of ${\UIZ}$, 
then ${\XI}^{\Sigma_{I}}$ is nonsingular. 
In what follows, we always assume this. 

By construction, the boundary $\Delta(I)_{\mathcal{X}}'={\XIS}-{\XI} $ of ${\XIS}$ is naturally stratified, 
whose strata are labelled by the cones in $\Sigma_{I}$. 
For $\sigma\in \Sigma_{I}$, we denote by $\Delta(I, \sigma)_{\mathcal{X}}$ the corresponding stratum. 
If the interior of $\sigma$ is contained in ${\CI}$ (which is equivalent to $\sigma\cap {\CI} \ne \emptyset$), 
then $\Delta(I, \sigma)_{\mathcal{X}}$ is a principal torus bundle over ${\VI}$ 
for the quotient torus of $T(I)$ defined by $\sigma$. 
We call such a cone $\sigma$ an \textit{$I$-cone}, 
and denote by $\Sigma_{I}^{\circ}\subset \Sigma_{I}$ 
the set of all $I$-cones.  
Then we write 
\begin{equation}\label{eqn: D(I)X}
\Delta(I)_{\mathcal{X}} = \bigsqcup_{\sigma\in \Sigma_{I}^{\circ}} \Delta(I, \sigma)_{\mathcal{X}}. 
\end{equation}
This is closed in $\Delta(I)_{\mathcal{X}}'$. 
The irreducible decomposition of $\Delta(I)_{\mathcal{X}}$ is given by 
\begin{equation*}
\Delta(I)_{\mathcal{X}} = \sum_{\sigma} \overline{\Delta(I, \sigma)}_{\mathcal{X}}, 
\end{equation*}
where now $\sigma$ ranges over $I$-cones which are minimal in $\Sigma_{I}^{\circ}$ and 
\begin{equation}\label{eqn: irr comp D(I)X}
\overline{\Delta(I, \sigma)}_{\mathcal{X}} = 
\bigsqcup_{\sigma' \succeq \sigma} \Delta(I, \sigma')_{\mathcal{X}} 
\end{equation}
is the closure of $\Delta(I, \sigma)_{\mathcal{X}}$ in $\Delta(I)_{\mathcal{X}}$. 
Then $\overline{\Delta(I, \sigma)}_{\mathcal{X}}$ is a smooth projective toric fibration over ${\VI}$. 

On the other hand, if $\sigma\in \Sigma_{I}$ is contained in the boundary ${\CI}^{\ast}-{\CI}$, 
let $J$ be the subspace of $I$ such that $\sigma$ is a $J$-cone. 
Since $U(J)_{{\Z}}\subset {\UIZ}$, we have a natural map $\mathcal{X}(J)\to {\XI}$. 
This extends to $\mathcal{X}(J)^{\Sigma_{J}}\to {\XI}^{\Sigma_{I}}$ where 
$\Sigma_{J}$ is the restriction of $\Sigma_{I}$ to $\mathcal{C}(J)^{\ast}\subset \mathcal{C}(I)^{\ast}$. 
Then the $\sigma$-stratum $\Delta(I, \sigma)_{\mathcal{X}}$ in ${\XI}^{\Sigma_{I}}$ 
is the image of the $\sigma$-stratum $\Delta(J, \sigma)_{\mathcal{X}}$ in $\mathcal{X}(J)^{\Sigma_{J}}$ 
(see \cite{AMRT} p.164). 

Since ${\GIl}$ preserves $\Sigma_{I}$, the group ${\GIbar}$ acts on ${\XIS}$. 
It will be useful to consider the intermediate quotient 
\begin{equation*}
{\YIS} = {\XIS}/{\GIbard}. 
\end{equation*}
If we put 
\begin{equation*}
Y_{I} = {\VI}/{\GIbard}, \quad X_{I}' = {\DI}/{\GIhd}, 
\end{equation*}
then \eqref{eqn: Siegel domain extended} descends to 
\begin{equation}\label{eqn: Siegel domain Y}
{\YIS} \to Y_I \stackrel{\pi_{I}}{\to} X_{I}'. 
\end{equation}
Here ${\YIS} \to Y_I$ is a family of open subsets of $T(I)^{\Sigma_{I}}$, 
$Y_I \to X_{I}'$ is a smooth abelian fibration with translation lattice $\overline{W(I)}_{{\Z}}$, 
and $X_{I}'$ is a finite cover of $X_{I}={\DI}/{\GIh}$. 
(The difference of $X_{I}'$ and $X_{I}$ seems to have been overlooked in some literatures.) 
The group ${\GIbard}$ preserves each boundary stratum $\Delta(I, \sigma)_{\mathcal{X}}$ of ${\XIS}$. 
We denote by 
\begin{equation*}
\Delta(I, \sigma)_{\mathcal{Y}} = \Delta(I, \sigma)_{\mathcal{X}}/{\GIbard} 
\end{equation*}
the corresponding boundary stratum in ${\YIS}$. 
When $\sigma$ is an $I$-cone, $\Delta(I, \sigma)_{\mathcal{Y}}$ is a principal torus bundle over $Y_{I}$. 
For such $\sigma$, we denote by 
\begin{equation*}
\overline{\Delta(I, \sigma)}_{\mathcal{Y}} 
= \overline{\Delta(I, \sigma)}_{\mathcal{X}}/{\GIbard} 
= \bigsqcup_{\sigma'\succeq \sigma} \Delta(I, \sigma')_{\mathcal{Y}} 
\end{equation*}
the closure of $\Delta(I, \sigma)_{\mathcal{Y}}$. 
This is a smooth projective toric fibration over $Y_{I}$.

The remaining group ${\GIbar}/{\GIbard}\simeq {\GIl}$ acts on ${\YIS}$. 
It permutes the strata $\Delta(I, \sigma)_{\mathcal{Y}}$ according to the action of ${\GIl}$ on $\Sigma_{I}$. 
This is free on $\Sigma_{I}^{\circ}$. 
Note that ${\GIbar}/{\GIbard}$ also acts on $\pi_{I} \colon Y_{I}\to X_{I}'$ nontrivially. 
The action on $X_{I}'$ is through the finite quotient ${\GIh}/{\GIhd}$. 
The kernel of 
\begin{equation*}
{\GIl} \simeq {\GIbar}/{\GIbard} \to {\GIh}/{\GIhd} 
\end{equation*}
is the subgroup $\Gamma_{I}\cap {\rm GL}(I)$ of ${\GIl}$. 
This acts on the $\pi_{I}$-fibers, whose action on $H_1$ comes from the ${\rm GL}(I)$-action on $I\otimes V(I)$. 

\subsection{Canonical extension}\label{ssec: canonical extension}

Let ${\El}$ be an automorphic vector bundle on ${\D}$. 
We explain the canonical extension of ${\El}$ over ${\XI}^{\Sigma_{I}}$ (\cite{Mu}) 
in an explicit form which is suitable for dealing with Fourier expansion. 
We use the same symbol ${\El}$ for the descent of ${\El}$ to ${\XI}$. 
We choose a $0$-dimensional cusp $I_0\supset I$. 
Since ${\UIZ}$ acts on $I_0$ trivially, 
the $I_{0}$-trivialization of ${\El}$ over ${\D}$ descends to an isomorphism 
${\El}\simeq {\Ivl}\otimes \mathcal{O}_{{\XI}}$ over ${\XI}$. 
Then we extend ${\El}$ to a vector bundle over ${\XI}^{\Sigma_{I}}$ (still denoted by ${\El}$) so that 
this isomorphism extends to ${\El}\simeq {\Ivl}\otimes \mathcal{O}_{{\XI}^{\Sigma_{I}}}$. 
This is called the \textit{canonical extension} of ${\El}$ over ${\XI}^{\Sigma_{I}}$. 
Lemma \ref{lem: f.a. constant} ensures that this is in fact independent of $I_0$. 

\begin{example}
The canonical extension of $\Omega_{{\XI}}^{k}$ is 
$\Omega_{{\XIS}}^{k}(\log \Delta(I)_{\mathcal{X}}')$,  
the locally free sheaf of $k$-forms with at most logarithmic singularity at the boundary (\cite{Mu} Proposition 3.4). 
\end{example}

The vector bundle ${\El}$ descends to ${\VI}$, 
and the canonical extension is its pullback by ${\XIS}\to {\VI}$ 
(cf.~\cite{Mu} p.257 and \cite{HZ1} \S 3.2):   

\begin{lemma}\label{lemma: descend to VI}
There exists a $P(I)_{{\R}}$-equivariant vector bundle ${\El}|_{{\VI}}$ on ${\VI}$ 
with the following properties. 

(1) We have ${\El}\simeq \pi^{\ast}_{1}({\El}|_{{\VI}})$ as $P(I)_{{\R}}$-equivariant vector bundles on ${\D}$. 

(2) The $I_{0}$-trivialization of ${\El}$ descends to an isomorphism ${\Ivl}\otimes \mathcal{O}_{{\VI}}\simeq {\El}|_{{\VI}}$. 

(3) The isomorphism in (1) extends to an isomorphism between the canonical extension of ${\El}$ 
and the pullback of ${\El}|_{{\VI}}$ by ${\XIS}\to {\VI}$. 
\end{lemma}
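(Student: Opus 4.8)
The plan is to derive all three assertions from equivariant descent along the principal $U(I)_{{\C}}$-bundle $\pi_{1}\colon {\D}(I)\to{\VI}$. The fact that drives everything is already contained in the proof of Lemma~\ref{lem: f.a. constant}: by definition $U(I)$ acts trivially on $I^{\perp}\supset I_{0}$, hence trivially on $I^{0}=I_{0}^{\vee}$ and on the Schur functor ${\Ivl}$; by symplectic invariance of the pairing the $I_{0}$-trivialization then intertwines the action of any $\gamma_{0}\in U(I)_{{\C}}$ on ${\El}$ with the identity on ${\Ivl}$, that is, $j(\gamma_{0},\cdot)\equiv{\rm id}$. In the $I_{0}$-trivialization this says precisely that $U(I)_{{\C}}$ acts on ${\El}$ by the identity along the fibers of $\pi_{1}$.

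First I would recall that ${\El}$ is the Schur functor applied to the tautological subbundle on ${\LG}(g)$, so it is defined on the open set ${\D}(I)\subset{\LG}(g)$ and carries a $U(I)_{{\C}}\cdot P(I)_{{\R}}$-equivariant structure. The $I_{0}$-trivialization is defined wherever $V$ is transverse to $I_{0,{\C}}$; this locus is open, contains ${\D}$, and is $U(I)_{{\C}}$-invariant (since $U(I)_{{\C}}$ fixes $I_{0,{\C}}$, it preserves the condition $V\cap I_{0,{\C}}=\{0\}$), so it is a $\pi_{1}$-saturated open set surjecting onto ${\VI}$. On it the trivialization identifies ${\El}$ with ${\Ivl}\otimes{\mathcal O}$ carrying the trivial $U(I)_{{\C}}$-action on the constant fiber. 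Hence the descent datum is trivial and ${\El}$ is the pullback of a bundle ${\El}|_{{\VI}}$ on ${\VI}$, giving ${\El}\simeq\pi_{1}^{\ast}({\El}|_{{\VI}})$; in the trivialization this descended bundle is ${\Ivl}\otimes{\mathcal O}_{{\VI}}$, which is (2). The $P(I)_{{\R}}$-equivariant structure passes to ${\El}|_{{\VI}}$ because $U(I)_{{\C}}$ is normal in $U(I)_{{\C}}\cdot P(I)_{{\R}}$, so $P(I)_{{\R}}$ acts on ${\VI}={\D}(I)/U(I)_{{\C}}$ and the action lifts to the descent.

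For (3), both sheaves are, after the $I_{0}$-trivialization, literally the trivial bundle ${\Ivl}\otimes{\mathcal O}_{\XIS}$: the canonical extension is defined so that the trivialization ${\El}\simeq{\Ivl}\otimes{\mathcal O}_{\XI}$ extends across the toroidal boundary, while the pullback of ${\El}|_{{\VI}}$ along $\XIS\to{\VI}$ is the pullback of ${\Ivl}\otimes{\mathcal O}_{{\VI}}$, again the trivial bundle. The isomorphism of (1) is the identity in these trivializations over the open part $\XI$, and since both sides are locally free extensions over the same base it propagates as the identity over $\XIS$, which is (3).

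The genuinely delicate points are the bookkeeping ones. First, one must check that the $I_{0}$-trivialization, a priori written down only on ${\D}$, covers the full $U(I)_{{\C}}$-orbits needed for descent over all of ${\VI}$; this is the transversality and saturation argument above, which is exactly where $I_{0}\subset I^{\perp}$ is used. Second, one must know that descent and canonical extension are compatible, i.e.\ that restricting the extended trivialization agrees with pulling back the descended one; this is immediate here only because both constructions are pinned down by the same $I_{0}$-trivialization, so the real task is to phrase the construction of the canonical extension (via Lemma~\ref{lem: f.a. constant}, which already guarantees independence of $I_{0}$) in such a way that this compatibility is built in rather than recomputed.
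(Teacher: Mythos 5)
Your proposal is correct and follows essentially the same route as the paper: the paper defines $\El|_{\VI}$ by observing (via Lemma \ref{lem: f.a. constant}) that the factor of automorphy $j(\gamma,x)$ in the $I_0$-trivialization is constant along $\pi_1$-fibers and hence descends to $\VI$, which is the same mechanism as your equivariant descent with trivial $U(I)_{\C}$-descent datum, and (3) is likewise disposed of in both arguments by noting that the canonical extension is pinned down by the very same $I_0$-trivialization. Your extra care about the saturation of the transversality locus is a detail the paper leaves implicit but does not change the substance.
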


\begin{proof}
Let $j(\gamma, x)$ be the factor of automorphy for the $P(I)_{{\R}}$-action on ${\El}$ with respect to the $I_{0}$-trivialization. 
By Lemma \ref{lem: f.a. constant}, 
$j(\gamma, x)$ descends to a function on $P(I)_{{\R}}\times {\VI}$ 
(trivial on $U(I)_{{\R}}\times {\VI}$). 
This defines the factor of automorphy of a $P(I)_{{\R}}$-equivariant vector bundle on ${\VI}$: 
this is ${\El}|_{{\VI}}$. 
By construction it is equipped with a trivialization ${\El}|_{{\VI}}\simeq {\Ivl}\otimes \mathcal{O}_{{\VI}}$. 
Then (1) and (2) follow from the construction. 
The assertion (3) holds because the canonical extension is also defined via the $I_{0}$-trivialization. 
\end{proof}

In the descent to ${\El}|_{{\VI}}$, 
the fibers of ${\El}$ over points in the same $\pi_{1}$-fiber are identified via the $I_{0}$-trivialization (by construction). 
Since $U(I)_{{\C}}$ acts on $I_{0, {\C}}$ trivially, 
this is the same as the identification via the equivariant $U(I)_{{\C}}$-action on ${\El}$. 

Let $\pi_{1,\Delta}\colon \Delta(I)_{\mathcal{X}} \to {\VI}$ be the natural projection. 
By Lemma \ref{lemma: descend to VI} (3), we have 
${\El}|_{\Delta(I)_{\mathcal{X}}} \simeq \pi_{1,\Delta}^{\ast}({\El}|_{{\VI}})$.  
Modular forms restricted to the boundary descend to ${\VI}$ in the following sense. 

\begin{lemma}\label{lem: descend section to VI}
Let $f$ be a modular form of weight $\lambda$, viewed as a section of ${\El}$ over ${\XI}$. 
Then $f$ extends to a holomorphic section of the canonical extension of ${\El}$ over ${\XIS}$. 
Moreover, there exists a ${\GIbar}$-invariant section $f_{I}$ of ${\El}|_{{\VI}}$ such that 
$f|_{\Delta(I)_{\mathcal{X}}}=\pi_{1,\Delta}^{\ast}f_{I}$.  
\end{lemma}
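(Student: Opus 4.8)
The plan is to reduce both assertions to the Fourier--Jacobi expansion of $f$ along the $U(I)$-directions and then to read everything off from the combinatorics of the torus embedding $T(I)\hookrightarrow T(I)^{\Sigma_{I}}$. First I would fix a $0$-dimensional cusp $I_{0}\supset I$ and pass to the $I_{0}$-trivialization, under which $f$ becomes a $\Ivl$-valued holomorphic function on $\XI$. Recall that $\XI=\D/\UIZ$ sits inside the principal $T(I)$-bundle $\mathcal{T}(I)=\D(I)/\UIZ$ over $\VI$, with $T(I)=U(I)_{\C}/\UIZ$. Restricting $f$ to the fibres of $\mathcal{T}(I)\to \VI$, which are open subsets of $T(I)$, and using $\UIZ$-invariance, I obtain an expansion
\[
f=\sum_{l\in \UIZZ}a(l)\,q^{l},
\]
where $q^{l}=\exp(2\pi i(l,Z))$ is the character of $T(I)$ attached to $l$ and each coefficient $a(l)$ is a $\Ivl$-valued holomorphic function on $\VI$. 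The Koecher principle at the cusp $I$ gives the support condition that $a(l)\neq 0$ only for $l\in \overline{\CI}^{\vee}$, i.e. for positive-semidefinite $l$.

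Granting this, the extension statement follows from the geometry of the relative torus embedding. Every cone $\sigma\in \Sigma_{I}$ lies in $\mathcal{C}(I)^{\ast}\subset \overline{\CI}$, so its dual satisfies $\sigma^{\vee}\supseteq \overline{\CI}^{\vee}$; hence each monomial $q^{l}$ occurring in $f$ is a regular function, bounded by $1$ in absolute value, on the affine toric chart attached to $\sigma$. The series therefore converges to a holomorphic function across the corresponding boundary stratum, as in the standard theory (cf.~\cite{Mu}, \cite{AMRT}). Since the canonical extension of $\El$ over $\XIS$ is by definition the one for which the $I_{0}$-trivialization extends to $\El\simeq \Ivl\otimes \mathcal{O}_{\XIS}$, this extended function is exactly a holomorphic section of the canonical extension, proving the first claim.

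For the boundary restriction I would restrict the expansion to a stratum $\Delta(I,\sigma)_{\mathcal{X}}$ with $\sigma$ an $I$-cone. On the associated torus orbit $q^{l}$ restricts to $0$ unless $l\in \sigma^{\perp}$, in which case it restricts to a character of the quotient torus. Because $\sigma$ is an $I$-cone, its relative interior meets $\CI$ and thus contains a positive-definite form $s_{0}$; if $l\in \overline{\CI}^{\vee}\cap \sigma^{\perp}$ then $(l,s_{0})=0$ with $l$ positive-semidefinite and $s_{0}$ positive-definite, forcing $l=0$. Hence only the constant term survives, and
\[
f|_{\Delta(I,\sigma)_{\mathcal{X}}}=a(0).
\]
Viewing the $\Ivl$-valued function $a(0)$ on $\VI$ as a section $f_{I}$ of $\El|_{\VI}$ through the trivialization $\El|_{\VI}\simeq \Ivl\otimes \mathcal{O}_{\VI}$ of Lemma \ref{lemma: descend to VI}(2), this reads $f|_{\Delta(I,\sigma)_{\mathcal{X}}}=\pi_{1,\Delta}^{\ast}f_{I}$. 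As $a(0)$ is independent of $\sigma$, these restrictions agree over all of $\Delta(I)_{\mathcal{X}}=\bigsqcup_{\sigma\in \Sigma_{I}^{\circ}}\Delta(I,\sigma)_{\mathcal{X}}$, giving $f|_{\Delta(I)_{\mathcal{X}}}=\pi_{1,\Delta}^{\ast}f_{I}$. Finally, $\GIbar$-invariance of $f_{I}$ I would deduce from that of $f$: the $\GIbar$-action preserves the canonical extension and each boundary stratum, so $f|_{\Delta(I)_{\mathcal{X}}}$ is $\GIbar$-invariant, and since $\pi_{1,\Delta}$ is $\GIbar$-equivariant and $\El|_{\VI}$ is $P(I)_{\R}$-equivariant (Lemma \ref{lemma: descend to VI}), invariance descends to $f_{I}$.

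The main obstacle is less any single computation than the coordinated matching of three descriptions: the analytic Fourier--Jacobi expansion, the toric combinatorics of $\Sigma_{I}$, and the equivariant bundle $\El|_{\VI}$. Concretely the crux is the support statement together with the orthogonality $\sigma^{\perp}\cap \overline{\CI}^{\vee}=\{0\}$ for $I$-cones, which is precisely what isolates the constant term $a(0)$ and realizes it as the boundary datum descending to $\VI$; the convergence of the extended series across the boundary, while essential, is standard toroidal bookkeeping that can be quoted.
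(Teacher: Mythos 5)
Your proof is correct, and it diverges from the paper's in an interesting way on the second assertion. For the extension statement both arguments ultimately rest on the Koecher principle, but the paper reduces to the zero-dimensional cusp via the gluing map ${\XIS}\to \mathcal{X}(I_0)^{\Sigma_{I_0}}$ and quotes the bound $l\in\overline{\mathcal{C}(I^{0})}$ there, whereas you work with the Fourier--Jacobi expansion along $U(I)$ directly; on that route you should justify the support condition $a(l)\neq 0\Rightarrow l\in\overline{\CI}^{\vee}$ at the cusp $I$, which follows in one line from the stated Koecher principle at a cusp $I_{0}\supset I$ by grouping the full expansion under the restriction map $S^{2}I^{0}\to S^{2}I^{\vee}$, which carries $\overline{\mathcal{C}(I^{0})}$ into the positive semidefinite cone. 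The genuine difference is in the descent to ${\VI}$: the paper's argument is soft --- the $\pi_{1,\Delta}$-fibers are connected unions of smooth projective toric varieties, so a section of the pullback bundle $\pi_{1,\Delta}^{\ast}({\El}|_{{\VI}})$ is constant on each fiber and therefore descends --- while you compute explicitly that on an $I$-cone stratum only the constant Fourier--Jacobi coefficient $a(0)$ survives, via the orthogonality $\sigma^{\perp}\cap\overline{\CI}^{\vee}=\{0\}$ for cones whose relative interior meets ${\CI}$. Your computation buys the extra identification $f_{I}=a(0)$, which anticipates Lemma \ref{lem: MF at boundary} and the Fourier formula \eqref{eqn: Siegel Fourier} for the Siegel operator; the paper's softer argument is shorter and defers all Fourier analysis to those later statements. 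One minor inaccuracy: the monomials $q^{l}$ need not be bounded by $1$ on the fibers of ${\XI}\to{\VI}$, since ${\rm Im}(Z)$ ranges over a translate of ${\CIo}$ rather than the cone itself; this does not affect the convergence, which only needs normal convergence on the relevant closed subsets of the toric charts.
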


\begin{proof}
The canonical extension over ${\XIS}$ is the pullback of the canonical extension over 
$\mathcal{X}(I_0)^{\Sigma_{I_{0}}}$ by the gluing map 
${\XIS} \to \mathcal{X}(I_0)^{\Sigma_{I_{0}}}$. 
Therefore, as for the first assertion, it suffices to prove the extendability at $I_0$. 
By construction, this is the same as the extendability of $f$ over $\mathcal{X}(I_{0})^{\Sigma_{I_{0}}}$ 
as a ${\Ivl}$-valued function,  
which in turn follows from the Koecher principle $l\in \overline{\mathcal{C}(I^{0})}$ 
in the Fourier expansion \eqref{eqn: Fourier expansion}. 

Since the fibers of $\pi_{1,\Delta}$ are connected union of smooth projective varieties, 
the restriction of $f|_{\Delta(I)_{\mathcal{X}}}$ to each $\pi_{1,\Delta}$-fiber 
as a section of $\pi_{1,\Delta}^{\ast}({\El}|_{{\VI}})$ is constant. 
Therefore $f|_{\Delta(I)_{\mathcal{X}}}$ is the pullback of a section $f_{I}$ of ${\El}|_{{\VI}}$. 
The ${\GIbar}$-invariance of $f|_{\Delta(I)_{\mathcal{X}}}$ implies that of $f_{I}$. 
\end{proof}

\subsection{Full toroidal compactification}\label{ssec: full toroidal}

Let $\Sigma=(\Sigma_{I})_{I}$ be a ${\G}$-admissible collection of fans in the sense of \cite{AMRT} \S III.5. 
Here $I$ ranges over all isotropic subspaces of ${\Q}^{2g}$, 
and each $\Sigma_{I}$ is a ${\GIl}$-admissible cone decomposition of ${\CI}^{\ast}$ 
satisfying suitable compatibility conditions. 
The toroidal compactification $X^{\Sigma}$ of $X$ associated to $\Sigma$ is defined as the quotient space 
\begin{equation*}
X^{\Sigma} = \left(  {\D}\sqcup \bigsqcup_{I}{\XI}^{\Sigma_{I}} \right) / \sim,  
\end{equation*}
where $\sim$ is the equivalence relation generated by 
the gluing maps ${\D}\to {\XIS}$, $\mathcal{X}(J)^{\Sigma_{J}}\to {\XIS}$ for $J\subset I$, 
and the ${\G}$-action. 

It is a fundamental theorem of Ash-Mumford-Rapoport-Tai \cite{AMRT} that 
$X^{\Sigma}$ is a compact analytic space containing $X$ as a Zariski open set, 
and the identity of $X$ extends to a morphism $\pi\colon X^{\Sigma}\to X^{\ast}$ to the Satake compactification. 
We may choose $\Sigma$ so that $X^{\Sigma}$ is smooth projective (\cite{AMRT}), 
where the smoothness is assured by requiring that each $\Sigma_{I}$ is regular. 
Furthermore, we may assume that the boundary divisor $X^{\Sigma}-X$ is simple normal crossing (see \cite{Ma2} Appendix). 

The natural projection ${\XIS}\to X^{\Sigma}$ factors through the quotient by ${\GIbar}$:   
\begin{equation*}
{\XIS}\to {\XIS}/{\GIbar}\to X^{\Sigma}. 
\end{equation*}
The second map is an isomorphism around the $I$-locus $\pi^{-1}(X_{I})$ (see \cite{AMRT} p.175). 
A boundary stratum $\Delta(I, \sigma)_{\mathcal{X}}$ of ${\XIS}$ is mapped into $\pi^{-1}(X_{I})$ if and only if $\sigma$ is an $I$-cone 
(\cite{AMRT} p.164). 
Therefore 
\begin{equation*}
\pi^{-1}(X_{I}) \simeq \Delta(I)_{\mathcal{X}}/{\GIbar}. 
\end{equation*}
For an $I$-cone $\sigma$ 
we denote by $\Delta([\sigma])\subset \pi^{-1}(X_{I})$ the image of $\Delta(I, \sigma)_{\mathcal{X}}$. 
Then $\pi^{-1}(X_{I})$ has the stratification 
\begin{equation*}
\pi^{-1}(X_{I})  =  \bigsqcup_{[\sigma]\in \Sigma_{I}^{\circ}/{\GIl}} \Delta([\sigma]), 
\end{equation*}
and $\Delta([\sigma])$ is isomorphic to $\Delta(I, \sigma)_{\mathcal{Y}}$ by the description in \S \ref{ssec: partial toroidal}. 
Note that the isomorphism $\Delta([\sigma]) \simeq \Delta(I, \sigma)_{\mathcal{Y}}$ 
depends on the cone $\sigma$, rather than its ${\GIl}$-equivalence class $[\sigma]$. 
If we use another equivalent cone $\gamma \sigma$, $\gamma\in {\GIl}$, 
the difference 
\begin{equation*}
\Delta(I, \sigma)_{\mathcal{Y}} \simeq \Delta([\sigma]) \simeq \Delta(I, \gamma \sigma)_{\mathcal{Y}} 
\end{equation*}
is the $\gamma$-action on $\Delta(I, \sigma)_{\mathcal{Y}}\subset {\YIS}$. 
Thus we have a projection 
\begin{equation*}
\Delta([\sigma]) \simeq \Delta(I, \sigma)_{\mathcal{Y}} \to Y_{I} 
\end{equation*}
for each choice of $\sigma$, 
but we cannot glue them over $\Sigma_{I}^{\circ}/{\GIl}$ unless $\dim I=1$ (where ${\GIl}$ is trivial). 
In other words, projection from the whole $\pi^{-1}(X_{I})$ to $Y_{I}$ is not well-defined. 

For a minimal $I$-cone $\sigma$, we denote by 
\begin{equation}\label{eqn: irr comp D(I)}
\overline{\Delta([\sigma])}^{I} 
\simeq \overline{\Delta(I, \sigma)}_{\mathcal{X}}/{\GIbard} 
\simeq \overline{\Delta(I, \sigma)}_{\mathcal{Y}}
\end{equation}
the image of $\overline{\Delta(I, \sigma)}_{\mathcal{X}}$ in $\pi^{-1}(X_{I})$. 
This is the closure of $\Delta([\sigma])$ in $\pi^{-1}(X_{I})$, 
and has the structure of a smooth projective toric fibration over $Y_{I}$ 
(depending on the choice of a cone $\sigma$ representing $[\sigma]$).  
The irreducible decomposition of $\pi^{-1}(X_{I})$ is given by 
\begin{equation*}
\pi^{-1}(X_{I}) = 
\sum_{\substack{[ \sigma ] \in \Sigma_{I}^{\circ}/{\GIl} \\ \textrm{minimal}}} \overline{\Delta([\sigma])}^{I}. 
\end{equation*}

\section{Siegel operator via toroidal compactification}\label{sec: Siegel}

The basic theory of Siegel operators for vector-valued Siegel modular forms was developed by Weissauer in \cite{We1} \S 2. 
In this section we reformulate the Siegel operators at the level of toroidal compactifications. 
In \S \ref{ssec: canonical extension}, we descended modular forms at the boundary to ${\VI}$. 
What we do here is to descend it further to ${\DI}$. 
See \cite{Ma1} Chapter 6 for a similar treatment in the orthogonal case. 

Throughout this section ${\G}$ is a neat arithmetic subgroup of ${\Sp}$. 
We fix an isotropic subspace $I$ of ${\Q}^{2g}$ of dimension $r$ 
and a smooth toroidal compactification $X^{\Sigma}$ of $X={\D}/{\G}$. 
Then we abbreviate ${\XIcpt}={\XIS}$. 
We also choose a maximal isotropic subspace $I_0\subset {\Q}^{2g}$ containing $I$. 
This plays only auxiliary roles, but is also fixed throughout this section.

\subsection{Boundary Fourier coefficients}\label{ssec: Fourier coefficient}

In this subsection we recall some results of Weissauer (\cite{We1} \S 2) in a form adapted to our purpose. 
We denote by $P(I, I_0)$ the stabilizer of $I$ in ${\rm GL}(I_0)$, 
and $U(I, I_0)$ the unipotent radical of $P(I, I_0)$. 
They fit into the exact sequence 
\begin{equation}\label{eqn: PI0I}
0 \to U(I, I_0) \to P(I, I_0) \to {\rm GL}(I)\times {\rm GL}(I_0/I) \to 1, 
\end{equation}
and $U(I, I_0)$ is canonically isomorphic to $I\otimes (I_0/I)^{\vee}$. 
The group $W(I)$ stabilizes $I_{0}$, and 
$U(I, I_{0})$ is the image of the natural map $W(I)\to {\rm GL}(I_{0})$. 

We identify ${\rm GL}(I_0)= {\rm GL}(I^{0})$ naturally where $I^0=I_{0}^{\vee}$ 
and regard $P(I, I_0)$ as a subgroup of ${\rm GL}(I^{0})$. 
Let $I^{\perp}=I^{\perp}\cap I^{0}$ be the annihilator of $I$ in $I^{0}$. 
This is canonically identified with $(I_0/I)^{\vee}$. 
Then $P(I, I_0)$ is the stabilizer of the subspace $I^{\perp}$ of $I^{0}$ in ${\rm GL}(I^{0})$, 
and \eqref{eqn: PI0I} can be rewritten as 
\begin{equation}\label{eqn: PI0I dual}
0 \to U(I, I_0) \to P(I, I_0) \to {\rm GL}(I^{\perp})\times {\rm GL}(I^{\vee}) \to 1 
\end{equation}
where $I^{\vee}$ is identified with the quotient $I^{0}/I^{\perp}$ of $I^{0}$. 

Let $\lambda=(\lambda_{1} \geq \cdots \geq \lambda_{g} \geq 0)$ be a highest weight for 
${\rm GL}(g, {\C})\simeq {\rm GL}(I^{0}_{{\C}})$. 
The number of indices $i$ with $\lambda_{i}=\lambda_{g}$ is called the \textit{corank} of $\lambda$. 
We denote by $({\Ivl})^{U}$ the $U(I, I_{0})_{{\C}}$-invariant part of ${\Ivl}$. 
By \eqref{eqn: PI0I dual}, this is a representation of ${\rm GL}(I^{\perp}_{{\C}})\times {\rm GL}(I_{{\C}}^{\vee})$. 
More specifically, by \cite{We1} Lemma 2, we have 
\begin{equation}\label{eqn: U-inv rep}
({\Ivl})^{U} \simeq I^{\perp}_{{\C},\lambda'} \boxtimes I_{{\C},\lambda''}^{\vee} 
\end{equation}
where $\lambda'=(\lambda_{1}, \cdots, \lambda_{g-r})$ and $\lambda''=(\lambda_{g-r+1}, \cdots, \lambda_{g})$. 
In particular, when $\lambda$ has corank $\geq r$, 
then $I_{{\C},\lambda''}^{\vee}$ is $1$-dimensional and so 
\begin{equation}\label{eqn: U-inv corank >= r}
({\Ivl})^{U} \simeq I^{\perp}_{{\C},\lambda'} 
\end{equation}
as a representation of ${\rm GL}(I^{\perp}_{{\C}})$. 

Let $f=\sum_{l}a(l)q^{l}$ be the Fourier expansion of a modular form of weight $\lambda$ at the $I_{0}$-cusp, 
where $a(l)\in {\Ivl}$ and $l\in \overline{\mathcal{C}(I^{0})}\cap U(I_{0})_{{\Z}}^{\vee}$. 
Via the inclusion $S^{2}(I^{\perp})\subset S^{2}I^{0}$, 
the cone $\overline{\mathcal{C}(I^{0})}$ contains $\overline{\mathcal{C}(I^{\perp})}$ in its boundary. 
This consists of positive-semidefinite forms on $I_{0,{\R}}$ 
whose nullspace contains $I_{{\R}}$ ($\Leftrightarrow$ which vanishes on $I_{{\R}}$). 
Thus we have 
\begin{equation*}
\overline{\mathcal{C}(I^{\perp})} = 
S^2(I^{\perp}_{{\R}}) \cap \overline{\mathcal{C}(I^{0})} = 
(S^2I_{{\R}})^{\perp}\cap \overline{\mathcal{C}(I^{0})}. 
\end{equation*}  

\begin{proposition}[\cite{We1} \S 2]\label{prop: Fourier U-invariant}
Let $l\in \overline{\mathcal{C}(I^{\perp})}$. 
If $\lambda$ has corank $<r$, then $a(l)= 0$. 
If $\lambda$ has corank $\geq r$, then 
$a(l) \in ({\Ivl})^{U}\simeq I^{\perp}_{{\C},\lambda'}$. 
\end{proposition}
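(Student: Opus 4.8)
The plan is to read off both assertions from the invariance of $f$ under two arithmetic subgroups of $\G$ that stabilize the chosen cusp $I_0$ — namely $W(I)_{\Z}$ and a Levi copy of $GL(I)$ — by substituting the Fourier expansion and comparing $q^l$-coefficients, and then to feed the resulting invariance of $a(l)$ into the representation-theoretic decomposition \eqref{eqn: U-inv rep}. The one tool I use repeatedly is that any $\gamma\in\Sp$ stabilizing $I_{0,\R}$ acts on $\HI\subset S^2I_{0,\C}$ by an affine map $Z\mapsto (\gamma|_{I_0})\,Z\,(\gamma|_{I_0})^{t}+S(\gamma)$, with a \emph{constant} factor of automorphy equal to the $GL(I^0_{\C})$-representation $\rho_{\lambda}(\gamma|_{I_0})$ on $\Ivl$ (the statement recorded just before Lemma \ref{lem: f.a. constant}).

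First I would treat the unipotent invariance, which already yields the corank $\geq r$ case. Fix $w\in W(I)_{\Z}\subset\G$. Since $W(I)$ stabilizes $I_0$, its image $n=w|_{I_0}$ lies in $U(I,I_0)$ and $w$ acts by $Z\mapsto nZn^{t}+S$ with $S\in S^2I_0$. In the block decomposition $S^2I_0=S^2I\oplus (I\otimes (I_0/I))\oplus S^2(I_0/I)$ and its dual $S^2I^0=S^2I^{\vee}\oplus(I^{\vee}\otimes I^{\perp})\oplus S^2(I^{\perp})$, two facts hold for $l\in\overline{\CIov}\subset S^2(I^{\perp})$: (i) because $W(I)$ acts trivially on $V(I)\supset I_0/I$, the $S^2(I_0/I)$-component of $S$ vanishes, whence $(l,S)=0$; and (ii) because $n$ induces the identity on $I_0/I$, the congruence action fixes $S^2(I^{\perp})$ pointwise, whence $n^{t}ln=l$. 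Substituting the expansion into $f(wZ)=\rho_{\lambda}(n)f(Z)$, reindexing the exponentials by $l\mapsto n^{t}ln$, and comparing the $q^{l}$-coefficient for $l\in\overline{\CIov}$, facts (i)--(ii) collapse the identity to $\rho_{\lambda}(n)a(l)=a(l)$. As $w$ runs over $W(I)_{\Z}$ the images $n$ fill a lattice in $U(I,I_0)$, which is Zariski dense in $U(I,I_0)_{\C}$; hence $a(l)\in(\Ivl)^{U}$. With \eqref{eqn: U-inv corank >= r} this settles the case where $\lambda$ has corank $\geq r$.

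For the vanishing when $\lambda$ has corank $<r$ (which forces $r\geq 2$) I would add the Levi invariance. Consider the Levi copy $L_I\simeq GL(I)$ of elements of $\Sp$ acting as a given $m\in GL(I)$ on $I$, trivially on $V(I)$, and correspondingly on $\Q^{2g}/I^{\perp}$; these stabilize $I_0$ (they fix $I_0/I\subset V(I)$ pointwise) and act linearly, $Z\mapsto m_1Zm_1^{t}$ with $m_1=\mathrm{diag}(m,1)$ and no translation. For $l\in S^2(I^{\perp})$ one has $m_1^{t}lm_1=l$, so the same coefficient comparison gives $\rho_{\lambda}(m_1)a(l)=a(l)$. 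Since $\G\cap L_I$ is arithmetic in $GL(I)$, it is Zariski dense in $SL(I)$ by Borel density, so $a(l)$ is $SL(I_{\C})$-invariant. Now by \eqref{eqn: U-inv rep} we have $a(l)\in(\Ivl)^{U}\simeq I^{\perp}_{\C,\lambda'}\boxtimes I^{\vee}_{\C,\lambda''}$, where $SL(I)$ acts trivially on the first factor and by the $\lambda''$-Schur functor on the second; taking $SL(I)$-invariants places $a(l)$ in $I^{\perp}_{\C,\lambda'}\otimes (I^{\vee}_{\C,\lambda''})^{SL(I)}$. The last space is nonzero exactly when $\lambda''=(\lambda_{g-r+1},\dots,\lambda_g)$ is constant, i.e.\ when $\lambda$ has corank $\geq r$; when the corank is $<r$ it is zero, forcing $a(l)=0$.

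The main obstacle is group-theoretic rather than analytic: I must verify that the factor of automorphy for elements stabilizing $I_0$ is the \emph{untwisted} constant $\rho_{\lambda}(\gamma|_{I_0})$ together with the stated affine action, so that the $q^l$-coefficient comparison carries no stray phase or determinant factor; check the two vanishing facts (i)--(ii) that make the coefficient identity collapse to a plain eigenvector equation; and ensure that $\G$ meets $W(I)$ and the Levi $L_I$ in Zariski-dense (arithmetic) subgroups, so that pointwise invariance upgrades to invariance under the full complex groups $U(I,I_0)_{\C}$ and $SL(I_{\C})$. The representation-theoretic decomposition \eqref{eqn: U-inv rep} I take as given from Weissauer.
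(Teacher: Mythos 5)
The paper gives no proof of this proposition --- it is quoted directly from Weissauer \cite{We1} \S 2 --- so there is no internal argument to compare yours against; your reconstruction is correct and is essentially the standard one. The two computational facts you isolate do hold: every $w\in W(I)_{{\Z}}$ stabilizes $I_{0}$, so its factor of automorphy is the constant $w$-action on ${\Ivl}$ (as recorded just before Lemma \ref{lem: f.a. constant}), and writing its tube-domain action as $Z\mapsto AZA^{t}+S$, the $S^{2}(I_{0}/I)$-component of $S$ vanishes while every $l\in S^{2}(I^{\perp})$ descends to $I_{0}/I$ and is therefore fixed by $A$; the resulting fixed-vector equation for $a(l)$ propagates from the lattice ${\rm Im}(W(I)_{{\Z}}\to {\rm GL}(I_{0}))$ to all of $U(I,I_{0})_{{\C}}$ by Zariski density of a lattice in a unipotent group, giving $a(l)\in ({\Ivl})^{U}$ for every $l\in\overline{\mathcal{C}(I^{\perp})}$ irrespective of the corank, whence the second assertion via \eqref{eqn: U-inv corank >= r}. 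For the first assertion your logic is also sound: corank $<r$ forces $r\geq 2$ since every $\lambda$ has corank $\geq 1$, the intersection of ${\G}$ with a rational Levi copy of ${\rm GL}(I)$ is arithmetic there and hence Zariski dense in ${\rm SL}(I_{{\C}})$, and $I^{\vee}_{{\C},\lambda''}$ in \eqref{eqn: U-inv rep} is an irreducible ${\rm SL}(r,{\C})$-module which is trivial exactly when $\lambda''$ is constant, i.e.\ when the corank is $\geq r$; so the Levi invariance forces $a(l)=0$ in the corank $<r$ range. The one step I would ask you to write out in full is your fact (i): the vanishing of the $S^{2}(I_{0}/I)$-block of the translation $S=BA^{t}$ uses both that the corresponding block of $B$ vanishes (triviality of $w$ on $V(I)$ applied to a complement of $I$ in $I^{\perp}$) and that $A$ is block upper-triangular with respect to $I\subset I_{0}$ (so that no contribution leaks in from the other blocks of $B$); the one-clause justification should become a short block-matrix computation.
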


\subsection{Automorphic vector bundles at boundary}\label{ssec: cusp reduction}

From now on, we assume that $\lambda$ has corank $\geq r$. 
We define a sub vector bundle ${\ElI}$ of ${\El}$ as 
the image of $({\Ivl})^{U}\otimes {\OD}$ by the $I_{0}$-trivialization 
${\Ivl}\otimes {\OD} \to {\El}$. 
We call this isomorphism $({\Ivl})^{U}\otimes {\OD}\to {\ElI}$ the $I_{0}$-trivialization of ${\ElI}$. 

\begin{lemma}\label{lem: ElI}
The sub vector bundle ${\ElI}$ of ${\El}$ does not depend on the choice of $I_{0}$ and is $P(I)_{{\R}}$-invariant. 
Moreover, ${\ElI}$ descends to a sub vector bundle ${\ElI}|_{{\VI}}$ of ${\El}|_{{\VI}}$, 
and the $I_{0}$-trivialization of ${\ElI}$ descends to an isomorphism 
$({\Ivl})^{U}\otimes \mathcal{O}_{{\VI}}\to {\ElI}|_{{\VI}}$. 
\end{lemma}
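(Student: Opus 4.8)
The plan is to give an intrinsic, trivialization-free description of $\ElI$ as a Schur-functorial sub vector bundle of $\El$ attached to the tautological flag cut out by $I^{\perp}$; from such a description, independence of $I_{0}$, $P(I)_{\R}$-invariance, and the descent to $\VI$ all become formal consequences of functoriality rather than of cocycle computations with the factor of automorphy.

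First I would introduce the sub vector bundle $\mathcal{W}\subset \E$ over $\D(I)$ whose fibre at $[V]$ is $W=V\cap I^{\perp}_{\C}$; this has constant rank $g-r$ precisely because we work over $\D(I)$. The key computation is that the $I_{0}$-trivialization $\E\to I^{0}_{\C}\otimes \OD$, which sends $v\in V$ to the functional $(v,\cdot)|_{I_{0,\C}}\in I_{0,\C}^{\vee}=I^{0}_{\C}$, carries $\mathcal{W}$ isomorphically onto the constant sub vector bundle $I^{\perp}_{\C}\otimes \OD$. Indeed $(v,\cdot)|_{I_{0,\C}}$ lies in the annihilator $I^{\perp}$ of $I$ in $I^{0}$ exactly when it kills $I_{\C}$, i.e.\ when $v\in I^{\perp}_{\C}$, hence when $v\in W$; a dimension count then gives the isomorphism.

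Next, by \eqref{eqn: PI0I dual} the group $U(I,I_{0})$ is the unipotent radical of the stabilizer of $I^{\perp}$ in ${\rm GL}(I^{0})$, so the invariant subspace $(\Ivl)^{U}=(S^{\lambda}I^{0}_{\C})^{U(I,I_{0})}$ (here $S^{\lambda}$ is the $\lambda$-Schur functor) depends only on the subspace $I^{\perp}_{\C}\subset I^{0}_{\C}$ and is functorial in the pair $(I^{0}_{\C},I^{\perp}_{\C})$, consistently with \eqref{eqn: U-inv corank >= r}. Transporting this through the isomorphism of the previous paragraph identifies $\ElI$ with the sub vector bundle $(S^{\lambda}\E)^{U_{\mathcal{W}}}$ cut out functorially by the flag $\mathcal{W}\subset \E$, where $U_{\mathcal{W}}$ denotes the unipotent radical of the stabilizer of $\mathcal{W}$. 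This description refers only to $\mathcal{W}$, hence only to $I$, so $\ElI$ is independent of $I_{0}$. Moreover every $\gamma\in P(I)_{\R}$ preserves $I^{\perp}$ and therefore sends $W_{[V]}=V\cap I^{\perp}_{\C}$ to $\gamma V\cap I^{\perp}_{\C}=W_{\gamma[V]}$, so it preserves $\mathcal{W}$, and functoriality of $(S^{\lambda}(-))^{U_{(-)}}$ yields the $P(I)_{\R}$-invariance.

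Finally, for the descent I would invoke Lemma \ref{lemma: descend to VI}: under $\El\simeq \pi_{1}^{\ast}(\El|_{\VI})$ the $I_{0}$-trivialization of $\El$ is the pullback of the trivialization $\Ivl\otimes \mathcal{O}_{\VI}\simeq \El|_{\VI}$. One checks that $\mathcal{W}$ itself descends to $\VI$, since $U(I)_{\C}$ acts trivially on $I^{\perp}$ and hence fixes each fibre $W$ as a subspace; equivalently, defining $\ElI|_{\VI}$ to be the image of $(\Ivl)^{U}\otimes \mathcal{O}_{\VI}$ produces a sub vector bundle with $\pi_{1}^{\ast}(\ElI|_{\VI})=\ElI$, and the $I_{0}$-trivialization of $\ElI$ descends to the asserted isomorphism $(\Ivl)^{U}\otimes \mathcal{O}_{\VI}\to \ElI|_{\VI}$. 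The main obstacle is the identification in the third paragraph: a priori $(\Ivl)^{U}$ is only the invariant space of a unipotent group and could depend on more than the subspace $I^{\perp}$, so the crux is to use \eqref{eqn: PI0I dual}, that $U(I,I_{0})$ is the \emph{full} unipotent radical of the stabilizer of $I^{\perp}$ in ${\rm GL}(I^{0})$, to make the invariants genuinely functorial in $\mathcal{W}$. Once this is secured, checking that taking the Schur functor and the $U$-invariants commutes with the trivialization is the routine part.
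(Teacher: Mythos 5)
Your proposal is correct and is essentially the paper's own argument, repackaged: the paper proves independence of $I_0$ by checking that the transition map between the $I_0$- and $I_0'$-trivializations carries $I^{\perp}\cap I^{\vee}_{0,{\C}}$ to $I^{\perp}\cap (I_0')^{\vee}_{{\C}}$ via $I^{\perp}\cap{\E}$ (your $\mathcal{W}$) and hence $U(I,I_0)_{{\C}}$ to $U(I,I_0')_{{\C}}$, which is precisely your observation that $({\Ivl})^{U}$ depends functorially only on the flag $I^{\perp}\subset I^{0}$, and the descent to ${\VI}$ is likewise obtained from the triviality of the $U(I)_{{\C}}$-action on $I_{0,{\C}}$. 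Your intrinsic fibrewise description of ${\ElI}$ as the invariants of $V_{\lambda}$ under the unipotent radical of the stabilizer of $V\cap I^{\perp}_{{\C}}$ is in fact made explicit by the paper slightly later, in the proof of Proposition \ref{prop: ElI descend}.
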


\begin{proof}
As for the independence from $I_0$, 
it suffices to show that if we use another $I_{0}'\supset I$, 
the difference of the $I_0$-trivialization and the $I_{0}'$-trivialization 
as linear maps $(I_{0,{\C}})_{\lambda}^{\vee}\to (I_{0,{\C}}')_{\lambda}^{\vee}$ 
sends the $U(I, I_{0})_{{\C}}$-invariant part of $(I_{0,{\C}})_{\lambda}^{\vee}$ 
to the $U(I, I_{0}')_{{\C}}$-invariant part of $(I_{0,{\C}}')_{\lambda}^{\vee}$. 
This amounts to checking that the isomorphism 
${\rm GL}(I^{\vee}_{0,{\C}})\to {\rm GL}((I')^{\vee}_{0,{\C}})$ 
induced by the similar difference 
$I^{\vee}_{0,{\C}} \to (I')^{\vee}_{0,{\C}}$ 
for ${\E}$ sends $U(I, I_{0})_{{\C}}$ to $U(I, I_{0}')_{{\C}}$. 
This in turn holds because  
$I^{\vee}_{0,{\C}} \to (I')^{\vee}_{0,{\C}}$ 
sends 
$I^{\perp}\cap I^{\vee}_{0,{\C}}$ to $I^{\perp}\cap (I')^{\vee}_{0,{\C}}$ 
via $I^{\perp} \cap {\E}$. 
This proves the independence from $I_{0}$. 
The $P(I)_{{\R}}$-invariance can be verified similarly. 

Since $U(I)_{{\C}}$ acts on $I_{0,{\C}}$ trivially, 
${\ElI}$ is a $U(I)_{{\C}}$-invariant sub vector bundle of ${\El}$ and 
the $I_{0}$-trivialization ${\ElI}\to ({\Ivl})^{U} \otimes \mathcal{O}_{{\D}}$ is $U(I)_{{\C}}$-invariant. 
This shows that ${\ElI}$ descends to a sub vector bundle ${\ElI}|_{{\VI}}$ of ${\El}|_{{\VI}}$ 
via the equivariant action of $U(I)_{{\C}}$, 
and the $I_{0}$-trivialization of ${\ElI}$ descends to a trivialization of ${\ElI}|_{{\VI}}$ 
(which is just a restriction of the $I_{0}$-trivialization of ${\El}|_{{\VI}}$). 
\end{proof}


\begin{example}
When ${\El}={\E}$, the fiber of ${\E}^{I}\subset {\E}$ over $[V]\in {\D}$ is $I^{\perp}\cap V \subset V$. 
The $I_{0}$-trivialization sends it to $I^{\perp}\cap I^{0}_{{\C}} \subset I^{0}_{{\C}}$. 
Moreover, ${\E}^{I}$ is isomorphic to the pullback of the Hodge bundle on ${\DI}$. 
Indeed, if $[W]\in {\DI}$ is the image of $[V]$, 
the projection $I^{\perp}\to V(I)$ gives a natural isomorphism $I^{\perp}\cap V \to W$, 
the target being the fiber of the Hodge bundle on ${\DI}$ over $[W]$. 
%
\end{example}

Let $f$ be a modular form of weight $\lambda$. 
Recall from Lemma \ref{lem: descend section to VI} that the restriction of $f$ to 
the $I$-locus $\Delta(I)_{\mathcal{X}}$ in the boundary of $\overline{\mathcal{X}(I)}$
descends to a section $f_{I}$ of ${\El}|_{{\VI}}$. 
The relevance of ${\ElI}$ to the Siegel operator comes from the following refinement of Lemma \ref{lem: descend section to VI}. 

\begin{lemma}\label{lem: MF at boundary}
$f_{I}$ takes values in ${\ElI}|_{{\VI}}$. 
\end{lemma}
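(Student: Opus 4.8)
The plan is to reduce the statement, via the $I_{0}$-trivialization, to a linear-algebraic assertion about the image of $f_{I}$, and then to recognize the relevant values as Fourier coefficients covered by Proposition \ref{prop: Fourier U-invariant}.

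First I would fix compatible trivializations. By Lemma \ref{lemma: descend to VI}(2) the $I_{0}$-trivialization identifies ${\El}|_{\VI}$ with ${\Ivl}\otimes \mathcal{O}_{\VI}$, and by Lemma \ref{lem: ElI} it restricts to an identification of the subbundle ${\ElI}|_{\VI}$ with $({\Ivl})^{U}\otimes \mathcal{O}_{\VI}$. Under these, $f_{I}$ becomes a ${\Ivl}$-valued holomorphic function on ${\VI}$, and the assertion of the lemma is precisely that this function is valued in $({\Ivl})^{U}$.

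Next I would compute $f_{I}$ from the Fourier expansion of $f$ at the fixed $0$-dimensional cusp $I_{0}\supset I$. Recall from the proof of Lemma \ref{lem: descend section to VI} that $f$ extends over $\mathcal{X}(I_{0})^{\Sigma_{I_{0}}}$ as the ${\Ivl}$-valued function $\sum_{l}a(l)q^{l}$, and that $f_{I}$ arises by restricting to $\Delta(I)_{\mathcal{X}}$ and descending to ${\VI}$. The $I$-stratum is the locus where the $S^{2}I_{\R}$-component of ${\rm Im}(Z)$ degenerates to the boundary, so that $|q^{l}| = \exp(-2\pi (l, {\rm Im}(Z)))\to 0$ for every $l$ not annihilating $S^{2}I_{\R}$, while the terms with
\begin{equation*}
l \in (S^{2}I_{\R})^{\perp}\cap \overline{\CIov} = \overline{\mathcal{C}(I^{\perp})}
\end{equation*}
survive (no term blows up, since Koecher gives $l\in\overline{\CIov}$). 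Hence the value of $f_{I}$ at each point of ${\VI}$ is a linear combination of coefficients $a(l)$ with $l\in \overline{\mathcal{C}(I^{\perp})}$.

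Finally I would apply Proposition \ref{prop: Fourier U-invariant}: since $\lambda$ has corank $\geq r$, every such coefficient $a(l)$ lies in $({\Ivl})^{U}$; as this is a linear subspace, $f_{I}$ is valued in $({\Ivl})^{U}$, which is the conclusion. The step I expect to be the main obstacle is the identification of the surviving Fourier modes: one must verify, using the torus-embedding geometry of $\mathcal{X}(I_{0})^{\Sigma_{I_{0}}}$, that each $I$-cone $\sigma$ kills the modes $l$ positive on $\sigma$, and that the descent along the compact toric fibers of $\Delta(I, \sigma)_{\mathcal{X}}\to {\VI}$ (as in Lemma \ref{lem: descend section to VI}) further restricts to modes trivial on all of $S^{2}I_{\R}$, namely the face $\overline{\mathcal{C}(I^{\perp})}$ of $\overline{\CIov}$. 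Once this is established, the conclusion is immediate.
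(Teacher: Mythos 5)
Your proposal is correct and follows essentially the same route as the paper: trivialize via $I_{0}$, restrict the Fourier expansion to the boundary, observe that only the modes $l\in\sigma^{\perp}\cap\overline{\CIov}=\overline{\mathcal{C}(I^{\perp})}$ survive, and invoke Proposition \ref{prop: Fourier U-invariant}. The only cosmetic difference is that the paper settles your ``main obstacle'' already on a single stratum $\Delta(I_{0},\sigma)_{\mathcal{X}}$ --- since $\sigma$ is an $I$-cone, $(l,\sigma)\equiv 0$ for $l\in\overline{\CIov}$ already forces $l\in(S^{2}I_{\R})^{\perp}$ --- so the descent along the compact toric fibers is not needed for this step.
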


\begin{proof}
Let $\sigma\in \Sigma_{I}^{\circ}$ be an arbitrary $I$-cone. 
What has to be shown is that the restriction of $f$ to the boundary stratum $\Delta(I, \sigma)_{\mathcal{X}}$ 
takes values in $({\Ivl})^{U}$ via the $I_{0}$-trivialization. 
By the gluing ${\XIcpt}\to \overline{\mathcal{X}(I_0)}$, 
this is reduced to showing that the restriction of $f$ to the $\sigma$-stratum 
$\Delta(I_{0}, \sigma)_{\mathcal{X}}$ in $\overline{\mathcal{X}(I_0)}$ 
takes values in $({\Ivl})^{U}$ via the $I_{0}$-trivialization. 
Let $f=\sum_{l}a(l)q^{l}$ be the Fourier expansion of $f$ at $I_{0}$. 
Since $\overline{\mathcal{C}(I^0)}$ is the dual cone of $\overline{\mathcal{C}(I_0)}$, 
we have $(l ,\sigma)\geq 0$ for every index vector $l$ with $a(l)\ne 0$. 
When $(l ,\sigma)\not \equiv 0$, $l$ has positive pairing with the interior of $\sigma$, 
so the function $q^{l}$ vanishes at $\Delta(I_{0}, \sigma)_{\mathcal{X}}$. 
Therefore, if we restrict $f$ to $\Delta(I_0, \sigma)_{\mathcal{X}}$, 
only those functions $q^{l}$ with $(l ,\sigma)\equiv 0$ survive. 
Since $\sigma$ is an $I$-cone, we have 
\begin{equation*}
\sigma^{\perp}\cap \overline{\mathcal{C}(I^{0})} = 
\mathcal{C}(I)^{\perp}\cap \overline{\mathcal{C}(I^{0})} = 
(S^2I_{{\R}})^{\perp}\cap \overline{\mathcal{C}(I^{0})} = 
\overline{\mathcal{C}(I^{\perp})}. 
\end{equation*}
Therefore 
\begin{equation}\label{eqn: Fourier f at boundary stratum}
f|_{\Delta(I_0, \sigma)_{\mathcal{X}}} = 
\sum_{l\in \overline{\mathcal{C}(I^{\perp})}} a(l) q^{l}|_{\Delta(I_{0}, \sigma)_{\mathcal{X}}}. 
\end{equation}
By Proposition \ref{prop: Fourier U-invariant}, 
we have $a(l)\in ({\Ivl})^{U}$ for every index vector $l$ appearing here. 
This proves our assertion. 
\end{proof}

Although ${\El}|_{{\VI}}$ no longer descends to ${\DI}$, 
we show that the sub vector bundle ${\ElI}|_{{\VI}}$ does so. 
Let ${\E}_{\lambda'}$ be the automorphic vector bundle on ${\DI}$ of weight 
$\lambda'=(\lambda_{1}, \cdots, \lambda_{g-r})$. 

\begin{proposition}\label{prop: ElI descend}
${\ElI}|_{{\VI}}\simeq \pi_{2}^{\ast}{\E}_{\lambda'}$ 
as $P(I)_{{\R}}$-equivariant vector bundles. 
\end{proposition}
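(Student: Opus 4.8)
The plan is to prove the isomorphism by trivializing both sides and reducing to a comparison of factors of automorphy for $P(I)_{\R}$. By Lemma~\ref{lem: ElI} the $I_{0}$-trivialization identifies ${\ElI}|_{{\VI}}$ with $({\Ivl})^{U}\otimes \mathcal{O}_{\VI}$. On the other side, since $I_{0}\subset I^{\perp}$ the subspace $I_{0}/I$ is a maximal isotropic subspace of $V(I)$, hence a $0$-dimensional cusp of ${\DI}$; it gives a trivialization ${\E}'\simeq (I_{0}/I)^{\vee}_{{\C}}\otimes \mathcal{O}_{\DI}$ of the Hodge bundle on ${\DI}$, and applying the $\lambda'$-Schur functor, ${\E}_{\lambda'}\simeq (I_{0}/I)^{\vee}_{{\C},\lambda'}\otimes \mathcal{O}_{\DI}$, so that $\pi_{2}^{\ast}{\E}_{\lambda'}\simeq (I_{0}/I)^{\vee}_{{\C},\lambda'}\otimes \mathcal{O}_{\VI}$. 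Because $(I_{0}/I)^{\vee}\simeq I^{\perp}$ canonically and $({\Ivl})^{U}\simeq I^{\perp}_{{\C},\lambda'}$ by \eqref{eqn: U-inv corank >= r}, both sides are trivialized by one and the same vector space, and it remains only to verify that the resulting identity map intertwines the two $P(I)_{\R}$-factors of automorphy.

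First I would dispose of the Hodge bundle itself, $\lambda=(1,0,\dots,0)$, which is exactly the Example above computing ${\E}^{I}$: the fiber of ${\E}^{I}$ over $[V]$ is $I^{\perp}\cap V$, the projection $I^{\perp}\to V(I)$ carries it isomorphically onto the fiber $W$ of ${\E}'$, and this identification is $P(I)_{\R}$-equivariant since the projection $I^{\perp}\to V(I)$ is. As $U(I)_{\C}$ acts trivially both on $I_{0}$ and on $V(I)$, the isomorphism is $U(I)_{\C}$-invariant, so by the descent used in Lemma~\ref{lemma: descend to VI} together with Lemma~\ref{lem: ElI} it descends to ${\E}^{I}|_{\VI}\simeq \pi_{2}^{\ast}{\E}'$ on ${\VI}$, compatibly with the trivializations of the previous paragraph. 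This pins down the factor of automorphy on the Hodge sub-bundle.

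For general $\lambda$ of corank $\geq r$ I would then apply the $\lambda'$-Schur functor. Since Schur functors commute with pullback, $\pi_{2}^{\ast}{\E}_{\lambda'}$ is the $\lambda'$-Schur functor of $\pi_{2}^{\ast}{\E}'\simeq {\E}^{I}|_{\VI}$, so it suffices to identify ${\ElI}|_{\VI}$ with the $\lambda'$-Schur functor of ${\E}^{I}|_{\VI}$, equivariantly. Fiberwise this is Weissauer's computation \eqref{eqn: U-inv corank >= r}: the invariant space $({\Ivl})^{U}$ is $I^{\perp}_{{\C},\lambda'}$, which under the Hodge-bundle identification of the previous paragraph is the $\lambda'$-Schur functor of the fiber of ${\E}^{I}$. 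Naturality of the Schur functor promotes this to an isomorphism of sub-bundles of ${\El}$, and combining it with the Hodge-bundle case yields ${\ElI}|_{\VI}\simeq \pi_{2}^{\ast}{\E}_{\lambda'}$ as vector bundles.

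The step I expect to be the crux is the $P(I)_{\R}$-equivariance of this identification, and in particular the bookkeeping of the ${\rm GL}(I)$-action. In its full form \eqref{eqn: U-inv rep} reads $({\Ivl})^{U}\simeq I^{\perp}_{{\C},\lambda'}\boxtimes I^{\vee}_{{\C},\lambda''}$, and although the factor $I^{\vee}_{{\C},\lambda''}$ is one-dimensional when $\lambda$ has corank $\geq r$, it is a nontrivial power of $\det I^{\vee}_{\C}$ on which ${\rm GL}(I)$ acts by a character that \eqref{eqn: U-inv corank >= r} suppresses. I would therefore have to pin down the $P(I)_{\R}$-equivariant structure on $\pi_{2}^{\ast}{\E}_{\lambda'}$ so as to carry exactly this ${\rm GL}(I)$-character, and check that $U(I)$ and the Heisenberg quotient $W(I)/U(I)$ act trivially on the invariant part on both sides; as in Lemma~\ref{lem: f.a. constant}, the factor of automorphy is constant along the $\pi_{1}$-fibers, which is what guarantees a clean descent to ${\VI}$. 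This matching of factors of automorphy, rather than the underlying fiberwise isomorphism, is where the genuine content lies.
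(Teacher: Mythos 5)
Your proposal follows essentially the same route as the paper: trivialize via $I_{0}$, use Weissauer's computation \eqref{eqn: U-inv rep}--\eqref{eqn: U-inv corank >= r} of $({\Ivl})^{U}$, identify the fiber through $I^{\perp}\cap V\to W$, and reduce everything to a comparison of factors of automorphy; your repackaging through the Hodge bundle plus the $\lambda'$-Schur functor is only a cosmetic variant of the paper's direct fiberwise identification, and your care about the $\det I^{\vee}$-character is, if anything, more explicit than the paper's remark that ${\rm GL}(I_{{\R}})$ acts by scalars. The one step you declare but do not execute --- the equivariance/descent verification --- is exactly where the paper's proof lives, and it closes in two lines: since $W(I)_{{\R}}$ stabilizes $I_{0,{\R}}$ with image $U(I,I_{0})_{{\R}}$ in ${\rm GL}(I_{0,{\R}})$, the factor of automorphy of ${\ElI}$ is the identity on all of $W(I)_{{\R}}$, and because $W(I)_{{\R}}$ is normal in $P(I)_{{\R}}$ and acts transitively on the $\pi_{2}$-fibers, the cocycle computation of Lemma \ref{lem: f.a. constant} (with $W(I)_{{\R}}$ in place of $U(I)_{{\C}}$) shows $j(\gamma,x)=j(\gamma,x')$ whenever $\pi_{2}(x)=\pi_{2}(x')$. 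Note that the descent you need here is along $\pi_{2}$ down to ${\DI}$, not along $\pi_{1}$ down to ${\VI}$ as your last sentence suggests --- the latter is already Lemma \ref{lem: ElI} and is not the content of this proposition.
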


\begin{proof}
Let $j(\gamma, x)$ be the factor of automorphy for the $P(I)_{{\R}}$-action on ${\ElI}|_{{\VI}}$ 
with respect to the $I_{0}$-trivialization of ${\ElI}|_{{\VI}}$. 
By construction, this is the descent of the one for ${\ElI}$. 
Since $W(I)_{{\R}}$ stabilizes $I_{0,{\R}}$ and the image of $W(I)_{{\R}}\to {\rm GL}(I_{0,{\R}})$ is $U(I, I_{0})_{{\R}}$, 
we have $j(\gamma, \cdot) \equiv {\rm id}$ for every $\gamma \in W(I)_{{\R}}$ 
by the definition of ${\ElI}$. 
Since $W(I)_{{\R}}$ acts on each $\pi_{2}$-fiber transitively,  
the same argument as the proof of Lemma \ref{lem: f.a. constant} shows that 
$j(\gamma, x) = j(\gamma, x')$ if $\pi_{2}(x)=\pi_{2}(x')$ for any $\gamma \in P(I)_{{\R}}$. 
Then we can argue as in the proof of Lemma \ref{lemma: descend to VI}: 
the function $j(\gamma, x)$ descends to a function on $P(I)_{{\R}}\times {\DI}$ 
which defines a $P(I)_{{\R}}$-equivariant vector bundle on ${\DI}$. 
($W(I)_{{\R}}$ acts trivially.) 
This is the descent of ${\ElI}|_{{\VI}}$ to ${\DI}$. 
We denote it by $\Phi_{I}{\El}$. 
In this descent, the fibers of ${\ElI}|_{{\VI}}$ over points in the same $\pi_{2}$-fiber are identified 
by the equivariant $W(I)_{{\R}}$-action. 

By construction $W(I)_{{\R}}$ acts on $\Phi_{I}{\El}$ trivially, 
and \eqref{eqn: U-inv rep} shows that 
${\rm GL}(I_{{\R}})$ acts on $\Phi_{I}{\El}$ as scalar multiplication. 
We calculate the weight of $\Phi_{I}{\El}$ with respect to ${\rm Sp}(V(I))$. 
We choose a base point $[V]\in {\D}$ and let $[W]\in {\DI}$ be the image of $[V]$. 
Recall from \S \ref{ssec: Siegel domain} that $W$ is the injective image of $I^{\perp}\cap V$ in $V(I)_{{\C}}$. 
Since the $I_{0}$-trivialization $V\to I^{0}_{{\C}}$ sends 
$I^{\perp}\cap V$ to $I^{\perp}\cap I^{0}_{{\C}}$, 
the $I_{0}$-trivialization of $V_{\lambda}$ sends 
$({\Ivl})^{U}$ to the invariant part of $V_{\lambda}$ for the unipotent radical of the stabilizer of 
$I^{\perp}\cap V$ in ${\rm GL}(V)$. 
This shows that the latter is the fiber of ${\ElI}$ over $[V]$ and hence the fiber of $\Phi_{I}{\El}$ over $[W]$. 
By \eqref{eqn: U-inv corank >= r}, this is isomorphic to 
$(I^{\perp}\cap V)_{\lambda'} \simeq W_{\lambda'}$ as a representation of ${\rm GL}(W)$. 
This implies $\Phi_{I}{\El}\simeq {\E}_{\lambda'}$. 
\end{proof}

By construction $\Phi_{I}{\El}$ comes with a $I_{0}$-trivialization 
$\Phi_{I}{\El} \simeq ({\Ivl})^{U}\otimes \mathcal{O}_{{\DI}}$, 
which is the descent of the $I_{0}$-trivialization of ${\ElI}|_{{\VI}}$.

\subsection{Siegel operator}\label{ssec: Siegel operator}

We can now formulate the Siegel operator at the level of ${\XIcpt}$. 
Let $f$ be a modular form of weight $\lambda$, 
which we view as a section of ${\El}$ over ${\XIcpt}$ by Lemma \ref{lem: descend section to VI}. 
Let $\Delta(I)_{\mathcal{X}}$ be the $I$-locus in the boundary of ${\XIcpt}$ as defined in \eqref{eqn: D(I)X}. 
We denote by $\pi_{\Delta}\colon \Delta(I)_{\mathcal{X}} \to {\DI}$ the projection. 
By Lemma \ref{lem: ElI} and Proposition \ref{prop: ElI descend}, 
we have a ${\GIbar}$-equivariant isomorphism 
\begin{equation}\label{eqn: ElI boundary}
{\ElI}|_{\Delta(I)_{\mathcal{X}}}\simeq \pi_{\Delta}^{\ast}\Phi_{I}{\El} \simeq \pi_{\Delta}^{\ast}{\E}_{\lambda'}. 
\end{equation}

\begin{proposition}\label{prop: Siegel operator}
There exists a modular form $\Phi_{I}(f)$ of weight $\lambda'$ on ${\DI}$ with respect to ${\GIh}$ such that 
$f|_{\Delta(I)_{\mathcal{X}}}=\pi_{\Delta}^{\ast}\Phi_{I}(f)$ as sections of \eqref{eqn: ElI boundary}. 
If $f=\sum_{l}a(l)q^{l}$ is the Fourier expansion of $f$ at the $I_{0}$-cusp, 
the Fourier expansion of $\Phi_{I}(f)$ at the $I_{0}/I$-cusp of ${\DI}$ is given by 
\begin{equation}\label{eqn: Siegel Fourier}
\Phi_{I}(f) = \sum_{l\in \overline{\mathcal{C}(I^{\perp})}} a(l)q^{l}. 
\end{equation}
\end{proposition}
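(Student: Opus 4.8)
The plan is to obtain $\Phi_I(f)$ by pushing the boundary value $f_I$ one step further than Lemma \ref{lem: descend section to VI}, namely from $\VI$ down to $\DI$. By Lemma \ref{lem: descend section to VI} we already have $f|_{\Delta(I)_{\mathcal{X}}}=\pi_{1,\Delta}^{\ast}f_{I}$ for a $\GIbar$-invariant section $f_{I}$ of $\El|_{\VI}$; by Lemma \ref{lem: MF at boundary} this $f_I$ takes values in $\ElI|_{\VI}$; and by Proposition \ref{prop: ElI descend} the subbundle $\ElI|_{\VI}$ is the pullback $\pi_{2}^{\ast}\Phi_{I}\El\simeq\pi_{2}^{\ast}\E_{\lambda'}$. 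So the whole task reduces to showing that $f_I$ is itself pulled back from a section $\Phi_I(f)$ of $\Phi_I\El$ over $\DI$, and to identifying its Fourier expansion. Recall that in Proposition \ref{prop: ElI descend} the fibers of $\ElI|_{\VI}$ along a $\pi_{2}$-fiber are glued by the equivariant $W(I)_{\R}$-action; since the image of $W(I)_{\R}\to{\rm GL}(I_{0,\R})$ is $U(I,I_{0})_{\R}$, which by the very definition of $\ElI$ acts trivially on $(\Ivl)^{U}$, this gluing is the identity in the $I_{0}$-trivialization. Consequently $f_I$ descends precisely when, written through the $I_{0}$-trivialization as a $(\Ivl)^{U}$-valued holomorphic function on $\VI$, it is constant along the fibers of $\pi_{2}$.

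I would establish this constancy directly from the Fourier expansion. Working at the $I_{0}$-cusp and using the gluing $\XIcpt\to\overline{\mathcal{X}(I_{0})}$ together with \eqref{eqn: Fourier f at boundary stratum}, the function $f_I$ reads, in the $I_{0}$-trivialization,
\begin{equation*}
f_{I}=\sum_{l\in\overline{\mathcal{C}(I^{\perp})}}a(l)q^{l},\qquad q^{l}=\exp(2\pi i(l,Z)).
\end{equation*}
The essential observation is that every surviving index satisfies $l\in S^{2}(I^{\perp})=S^{2}((I_{0}/I)^{\vee})$, so the pairing $(l,Z)$ depends only on the image of $Z$ in $S^{2}(I_{0}/I)_{\C}$, that is, only on the $\DI$-coordinate in the Siegel domain realization at $I_{0}$. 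Hence each $q^{l}$ is $U(I)_{\C}$-invariant (so it lives on $\VI$) and is constant along the $I\otimes(I_{0}/I)$-translation fibers of $\pi_{2}$. Therefore $f_I$ factors through $\DI$ and defines the desired section $\Phi_I(f)$ of $\Phi_I\El\simeq\E_{\lambda'}$, whose Fourier expansion at the $I_{0}/I$-cusp of $\DI$ is exactly \eqref{eqn: Siegel Fourier}; note that the Koecher cone $\overline{\mathcal{C}((I_{0}/I)^{\vee})}=\overline{\mathcal{C}(I^{\perp})}$ for $\DI$ matches the range of summation, so no extra Fourier coefficients are forced to vanish.

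Finally I would verify that $\Phi_I(f)$ is a genuine modular form of weight $\lambda'$ for $\GIh$. Holomorphicity is inherited from $f_I$. For invariance, the $\GIbar$-invariance of $f_I$ descends along the surjection $\GIbar\to\GIh$ induced by $\Gamma(I)\to{\rm Sp}(V(I))$, using that the descent $\ElI|_{\VI}\to\Phi_I\El$ of Proposition \ref{prop: ElI descend} is $P(I)_{\R}$-equivariant; here one must confirm that the residual ${\rm GL}(I)$-action on $\Phi_I\El$ is absorbed into the weight-$\lambda'$ structure, which is part of the identification $\Phi_I\El\simeq\E_{\lambda'}$. The main obstacle is the descent step itself: one must reconcile the two meanings of ``constant along $\pi_{2}$-fibers''---the analytic one coming from the Fourier expansion and the bundle-theoretic one coming from the $W(I)_{\R}$-equivariant gluing of Proposition \ref{prop: ElI descend}---and this is exactly where Proposition \ref{prop: Fourier U-invariant}, which pins the surviving indices to $\overline{\mathcal{C}(I^{\perp})}$, does the decisive work.
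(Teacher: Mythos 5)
Your proposal is correct, and its overall architecture (restrict to the boundary, descend to $\mathcal{V}_I$ via Lemma \ref{lem: descend section to VI}, land in $\mathcal{E}_{\lambda}^{I}|_{\mathcal{V}_I}\simeq\pi_2^{\ast}\mathcal{E}_{\lambda'}$ via Lemma \ref{lem: MF at boundary} and Proposition \ref{prop: ElI descend}, then descend to $\mathcal{D}_I$) matches the paper. The one place you genuinely diverge is the justification of the final descent along $\pi_2$. The paper argues softly: $f_I$ is $\overline{\Gamma(I)}$-invariant, hence invariant under the lattice $\overline{W(I)}_{\mathbb{Z}}$, so it descends to $\mathcal{V}_I/\overline{W(I)}_{\mathbb{Z}}$, whose fibers over $\mathcal{D}_I$ are compact abelian varieties; a holomorphic section of a bundle pulled back from the base is then constant on fibers. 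You instead read the constancy off the Fourier expansion: the surviving indices lie in $S^2(I^{\perp})=S^2((I_0/I)^{\vee})$, so each $q^l$ factors through the linear projection $S^2I_{0,\mathbb{C}}\to S^2(I_0/I)_{\mathbb{C}}$ and hence through $\pi_2\circ\pi_1$, which gives literal constancy along the $\pi_2$-fibers in the $I_0$-trivialization (and you correctly note that the $W(I)_{\mathbb{R}}$-gluing of Proposition \ref{prop: ElI descend} is the identity in that trivialization, so analytic constancy really does give bundle-theoretic descent). Your route has the merit of producing the first assertion and the Fourier formula \eqref{eqn: Siegel Fourier} in one stroke, at the cost of being tied to the $I_0$-chart; the paper's compactness argument is coordinate-free and does not need the Fourier expansion until the second assertion. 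Both are complete proofs, and your closing remarks on $\Gamma_{I,h}$-invariance and the identification $\Phi_I\mathcal{E}_{\lambda}\simeq\mathcal{E}_{\lambda'}$ agree with how the paper handles modularity.
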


Here we view $q^l$ for $l\in \overline{\mathcal{C}(I^{\perp})}=\overline{\mathcal{C}((I_{0}/I)^{\vee})}$ 
as a function on ${\DI}$ via the tube domain realization ${\DI}\subset S^2(I_{0}/I)_{{\C}}$ of ${\DI}$ 
with respect to $I_{0}/I \subset V(I)$. 

\begin{proof}
By Lemma \ref{lem: MF at boundary}, the descent $f_{I}$ of $f|_{\Delta(I)_{\mathcal{X}}}$ to ${\VI}$ takes values in 
${\ElI}|_{{\VI}}\simeq \pi_{2}^{\ast}{\E}_{\lambda'}$. 
Since $f_{I}$ is ${\GIbar}$-invariant, 
it is in particular invariant under the translation by $\overline{W(I)}_{{\Z}}$ 
on the fibers of $\pi_{2}\colon {\VI}\to {\DI}$ 
and so descends to ${\VI}/\overline{W(I)}_{{\Z}}$. 
Since ${\VI}/\overline{W(I)}_{{\Z}} \to {\DI}$ has compact fibers (abelian varieties), 
we find that $f_{I}=\pi_{2}^{\ast}\Phi_{I}(f)$ 
for a section $\Phi_{I}(f)$ of ${\E}_{\lambda'}$ over ${\DI}$.  
Hence $f|_{\Delta(I)_{\mathcal{X}}}=\pi_{\Delta}^{\ast}\Phi_{I}(f)$. 
The ${\GIbar}$-invariance of $f_{I}$ implies the ${\GIh}$-invariance of $\Phi_{I}(f)$. 

It remains to verify \eqref{eqn: Siegel Fourier}. 
By \eqref{eqn: Fourier f at boundary stratum}, the $I_{0}$-trivialization of $\pi_{\Delta}^{\ast}\Phi_{I}(f)$ is expressed as  
\begin{equation*}
\pi_{\Delta}^{\ast}\Phi_{I}(f) = f|_{\Delta(I)_{\mathcal{X}}} 
= \sum_{l\in \overline{\mathcal{C}(I^{\perp})}} a(l)q^{l}|_{\Delta(I)_{\mathcal{X}}}, 
\quad a(l)\in ({\Ivl})^{U}. 
\end{equation*}
Here, for $l\in \overline{\mathcal{C}(I^{\perp})}\cap U(I_{0})_{{\Z}}^{\vee}$, 
we view the function $q^{l}$ on $\overline{\mathcal{X}(I_{0})}$ as a function on ${\XIcpt}$ 
by the pullback by the gluing map ${\XIcpt}\to \overline{\mathcal{X}(I_{0})}$. 
This descends to a function on ${\DI}\subset S^2(I_0/I)_{{\C}}$ 
by the condition $l\in \overline{\mathcal{C}(I^{\perp})}\subset S^2(I_{0}/I)_{{\R}}^{\vee}$. 
By the construction of $\Phi_{I}{\El}$, 
pullback of sections by $\pi_{\Delta}$ is identified with ordinary pullback of $({\Ivl})^{U}$-valued functions by the $I_0$-trivialization. 
Therefore the $I_0$-trivialization of $\Phi_{I}(f)$ has the Fourier expansion \eqref{eqn: Siegel Fourier}. 
\end{proof}

We call the map 
\begin{equation*}
\Phi_{I} : M_{\lambda}({\G}) \to M_{\lambda'}({\GIh}), \qquad 
f\mapsto \Phi_{I}(f), 
\end{equation*}
the \textit{Siegel operator} at the $I$-cusp. 
This can be understood as a two-step operation: 
restriction to the toroidal boundary $\Delta(I)_{\mathcal{X}}$, and 
then descent to ${\DI}$. 
By the calculation \eqref{eqn: Siegel Fourier} of Fourier expansion, 
this agrees with the classical definition of Siegel operator after taking the $I_{0}$-trivialization. 

\section{Holomorphic differential forms}\label{sec: Omega}

In this section we prove Theorem \ref{thm: main} 
by applying the result of \S \ref{sec: Siegel} to holomorphic differential forms 
and combining it with consideration of filtrations associated to the Siegel domain realization. 
The essential points are contained in the representation-theoretic calculation of the relevant vector bundles in 
\S \ref{ssec: first filtration} and \S \ref{ssec: holomorphic Leray}. 

Throughout this section, $X={\D}/{\G}$ is a Siegel modular variety of genus $g>1$ with ${\G}$ neat. 
We fix an isotropic subspace $I$ of ${\Q}^{2g}$ of dimension $r$, 
together with an auxiliary maximal isotropic subspace $I_{0}\supset I$. 

\subsection{Weissauer weights}\label{ssec: Weissauer weight}

Let $0\leq \alpha \leq g$. 
Let ${\la}$ be the highest weight for ${\rm GL}(g, {\C})$ defined by 
\begin{equation*}
{\la} = ((g+1)^{g-\alpha}, (g-\alpha)^{\alpha}), 
\end{equation*}
where the notation means that $g+1$ repeats $g-\alpha$ times and 
then $g-\alpha$ repeats $\alpha$ times. 
When $\alpha >0$, ${\la}$ has corank $\alpha$. 
We write 
\begin{equation*}
k(\alpha) = |{\la}|/2 = g(g+1)/2-\alpha(\alpha+1)/2. 
\end{equation*}
The ${\rm GL}(g, {\C})$-representation 
$\wedge^{k(\alpha)}S^2{\C}^{g}$ 
contains an irreducible representation of highest weight ${\la}$ with multiplicity $1$ (see \cite{We1} \S 7). 
We denote it by ${\C}^{g}_{{\la}}$. 
Correspondingly, 
$\wedge^{k(\alpha)}S^2{\E}\simeq \Omega^{k(\alpha)}_{{\D}}$ 
contains the automorphic vector bundle ${\Ela}$ with multiplicity $1$. 

In \cite{We1} \S 7, Weissauer proved that $H^{0}(\Omega_{X}^{k})=0$ 
unless $k=k(\alpha)$ for some $0\leq \alpha \leq g$. 
Moreover, he proved that global sections of $\Omega^{k(\alpha)}_{X}$ 
take values in the sub vector bundle ${\Ela}$, and hence 
\begin{equation*}
H^0(\Omega_{X}^{k(\alpha)}) = M_{{\la}}({\G}). 
\end{equation*}

For later use, let us calculate a highest weight vector of ${\C}^{g}_{{\la}}$. 

\begin{lemma}\label{lem: highest weight vector}
Let $e_{1}, \cdots, e_{g}$ be the standard basis of ${\C}^{g}$. 
Then the vector 
\begin{equation*}
e_{{\la}} := \bigwedge_{\substack{i\leq j \\ i\leq g-\alpha}} e_{i}e_{j} \quad 
\in \wedge^{k(\alpha)}S^2{\C}^{g} 
\end{equation*}
is a highest weight vector of ${\C}^{g}_{{\la}}$. 
\end{lemma}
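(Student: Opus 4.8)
The plan is to verify directly that $e_{\la}$ is a nonzero weight vector of weight $\la$ annihilated by every raising operator, and then to identify the irreducible component it generates with ${\C}^{g}_{\la}$ via the multiplicity-one statement recalled above. Write $S=\{(i,j): i\leq j\leq g,\ i\leq g-\alpha\}$, so that $e_{\la}=\bigwedge_{(i,j)\in S}e_ie_j$. The factors are distinct members of the standard basis of $S^2{\C}^g$, so $e_{\la}$ is a nonzero basis vector, and the complementary count $\binom{g+1}{2}-\binom{\alpha+1}{2}=k(\alpha)$ confirms $|S|=k(\alpha)$. The diagonal torus acts on $e_ie_j$ with weight $\epsilon_i+\epsilon_j$, so the $a$-th coordinate of the weight of $e_{\la}$ equals the number of factors $(i,j)\in S$ with $i=a$ plus the number with $j=a$. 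When $a\leq g-\alpha$ this is $(g-a+1)+a=g+1$, and when $a>g-\alpha$ the first count is empty and the second is $g-\alpha$; hence the weight is exactly $\la$.

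The main step is to show $E_{ab}\,e_{\la}=0$ for every raising operator $E_{ab}$ with $a<b$, where $E_{ab}$ acts as a derivation on $S^2{\C}^g$ and hence on the wedge. In the expansion $E_{ab}\,e_{\la}=\sum_{m}\cdots\wedge E_{ab}(w_m)\wedge\cdots$ with $w_m=e_ie_j$, only the factors containing the index $b$ contribute, and for each such factor $E_{ab}$ replaces one occurrence of $b$ by $a$, producing a single product $e_pe_q$ (with coefficient $2$ in the diagonal case $w_m=e_b^2$). The \emph{crucial} observation is that, since $a<b$ and every $(i,j)\in S$ has $i\leq g-\alpha$, the smaller index $\min(p,q)$ of the resulting product is again $\leq g-\alpha$; hence $e_pe_q$ already occurs as another factor of $e_{\la}$, and because this factor differs from $w_m$ (as $a\neq b$) the corresponding term carries a repeated factor and vanishes. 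Thus every summand is zero.

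The hard part is precisely this bookkeeping, and I expect the care to lie in checking the inequality $\min(p,q)\leq g-\alpha$ uniformly across the subcases $b=i$ with $j\geq b$, $b=j$ with $i<b$, and the diagonal $i=j=b$; in each case one reduces to $a<b\leq g-\alpha$ or $a<i\leq g-\alpha$. Once $E_{ab}\,e_{\la}=0$ is established for all $a<b$, the conclusion is immediate: $e_{\la}$ is a nonzero highest weight vector of the dominant weight $\la$, so it generates an irreducible subrepresentation of $\wedge^{k(\alpha)}S^2{\C}^g$ of highest weight $\la$. By the multiplicity-one statement of \S\ref{ssec: Weissauer weight} (following \cite{We1} \S 7), this subrepresentation is ${\C}^{g}_{\la}$, and $e_{\la}$ is its highest weight vector.
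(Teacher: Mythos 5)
Your proof is correct and takes essentially the same route as the paper: verify the torus weight is $\la$ by counting occurrences of each $e_a$ among the wedge factors, check invariance under the unipotent upper-triangular subgroup, and conclude via the multiplicity-one property. The paper dismisses the unipotent-invariance step as a ``direct calculation''; your bookkeeping with the raising operators $E_{ab}$ acting as derivations --- showing each resulting factor $e_pe_q$ still has $\min(p,q)\leq g-\alpha$ and hence duplicates an existing factor --- is precisely that calculation, carried out at the Lie algebra level, which is equivalent.
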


\begin{proof}
In the definition of $e_{{\la}}$, each vector $e_i$ with $1\leq i \leq g-\alpha$ appears $g+1$ times, 
while $e_{i}$ with $g-\alpha < i \leq g$ appears $g-\alpha$ times. 
This shows that $e_{{\la}}$ is an eigenvector of weight ${\la}$ for the diagonal torus $({\C}^{\ast})^{g}\subset {\rm GL}(g, {\C})$. 
Moreover, direct calculation shows that $e_{{\la}}$ is fixed by unipotent upper triangular matrices. 
Thus $e_{{\la}}$ is a highest weight vector of weight ${\la}$. 
The irreducible representation generated by $e_{{\la}}$ 
must coincide with ${\C}^{g}_{{\la}}$ by the multiplicity $1$ property. 
\end{proof}

\subsection{First filtration}\label{ssec: first filtration}

From now on, we fix $\alpha$ with $r\leq \alpha < g$ and write $k=k(\alpha)$. 
The Siegel domain realization 
${\D}\stackrel{\pi_{1}}{\to} {\VI}\stackrel{\pi_{2}}{\to} {\DI}$ 
with respect to $I$ defines the filtration 
\begin{equation}\label{eqn: Siegel domain filtration}
\pi_{1}^{\ast}\pi_{2}^{\ast}\Omega_{{\DI}}^k \subset \pi_{1}^{\ast}\Omega_{{\VI}}^k \subset \Omega_{{\D}}^k 
\end{equation}
on $\Omega_{{\D}}^k$. 
As in \S \ref{sec: Siegel}, we write $I^{0}=I_{0}^{\vee}$ and $I^{\perp}=(I_{0}/I)^{\vee}\subset I^{0}$.

\begin{lemma}
By the $I_{0}$-trivialization of $\Omega_{{\D}}^{k}$, 
the filtration \eqref{eqn: Siegel domain filtration} corresponds to the constant filtration 
\begin{equation}\label{eqn: Siegel domain filtration constant}
\wedge^kS^2(I^{\perp}) \subset \wedge^k(I^{\perp}\cdot I^0) \subset \wedge^kS^2I^0 
\end{equation}
on $\wedge^{k}S^2I^{0}$ tensored with ${\C}$. 
\end{lemma}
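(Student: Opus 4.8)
The plan is to reduce the statement to a piece of constant linear algebra on the fixed vector space $\wedge^{k}S^{2}I^{0}_{{\C}}$, exploiting that the tube domain realization at $I_{0}$ is \emph{linear}: this is exactly what makes the $I_{0}$-trivialization carry all the subbundles in \eqref{eqn: Siegel domain filtration} to constant subspaces. First I would record the trivialization of $\Omega^{k}_{{\D}}$ itself. By Example \ref{ex: I-trivialization TD} the tube domain realization gives $T_{{\D}}\simeq S^{2}I_{0,{\C}}\otimes {\OD}$, compatibly with the $I_{0}$-trivialization; dualizing and using $I^{0}=I_{0}^{\vee}$ yields $\Omega^{1}_{{\D}}\simeq S^{2}I^{0}_{{\C}}\otimes {\OD}$ and hence $\Omega^{k}_{{\D}}\simeq \wedge^{k}S^{2}I^{0}_{{\C}}\otimes {\OD}$, which is the trivialization with respect to which \eqref{eqn: Siegel domain filtration constant} is phrased.

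Next I would identify the tangent directions of the fibers of the two projections. By \S\ref{ssec: Siegel domain}, the tube domain realization identifies the Siegel domain fibration \eqref{eqn: Siegel domain} with a restriction of the two linear projections $S^{2}I_{0,{\C}}\to S^{2}I_{0,{\C}}/S^{2}I_{{\C}}\to S^{2}(I_{0}/I)_{{\C}}$. Consequently the fibers of $\pi_{1}$ have constant tangent space $S^{2}I_{{\C}}\subset S^{2}I_{0,{\C}}$, while the fibers of the composite $\pi_{2}\circ\pi_{1}\colon {\D}\to {\DI}$ have constant tangent space equal to $\ker(S^{2}I_{0}\to S^{2}(I_{0}/I))_{{\C}}=(I\cdot I_{0})_{{\C}}$. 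Since $\pi_{1}$ and $\pi_{2}\circ\pi_{1}$ are smooth, $\pi_{1}^{\ast}\Omega^{1}_{{\VI}}$ (resp.\ $\pi_{1}^{\ast}\pi_{2}^{\ast}\Omega^{1}_{{\DI}}$) is precisely the subbundle of $\Omega^{1}_{{\D}}$ annihilating the $\pi_{1}$-fiber (resp.\ composite-fiber) tangent directions.

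I then dualize, the key computation being that the annihilator of $S^{2}I$ in $S^{2}I^{0}=(S^{2}I_{0})^{\vee}$ equals $I^{\perp}\cdot I^{0}$, while the annihilator of $I\cdot I_{0}$ equals $S^{2}(I^{\perp})$, where $I^{\perp}=(I_{0}/I)^{\vee}$. Both are elementary in a basis of $I_{0}$ adapted to $I\subset I_{0}$: a symmetric form on $I_{0}$ kills $S^{2}I$ iff its $I\times I$ block vanishes, which is exactly the condition of lying in $I^{\perp}\cdot I^{0}$; and it kills $I\cdot I_{0}$ iff it is pulled back from $S^{2}(I_{0}/I)$, i.e.\ lies in $S^{2}(I^{\perp})$. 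Thus at the level of $\Omega^{1}$ the filtration trivializes to the constant chain $S^{2}(I^{\perp})\subset I^{\perp}\cdot I^{0}\subset S^{2}I^{0}$. Finally, since pullback along a smooth morphism commutes with $\wedge^{k}$ and sends the level-one subbundle to its $k$-th exterior power inside $\Omega^{k}_{{\D}}$, I obtain $\pi_{1}^{\ast}\Omega^{k}_{{\VI}}\simeq \wedge^{k}(I^{\perp}\cdot I^{0})\otimes {\OD}$ and $\pi_{1}^{\ast}\pi_{2}^{\ast}\Omega^{k}_{{\DI}}\simeq \wedge^{k}S^{2}(I^{\perp})\otimes {\OD}$, which is exactly \eqref{eqn: Siegel domain filtration constant}; constancy is automatic because every identification is made through the linear tube domain model.

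The main obstacle is bookkeeping rather than depth: I must correctly match the two fibration steps with the two-step linear projection, so that the smaller and larger pieces of the cotangent filtration land on the correct annihilators (note that the bigger composite fibers produce the \emph{smaller} cotangent subbundle $S^{2}(I^{\perp})$). The one genuine computation is the dualization $\operatorname{Ann}(S^{2}I)=I^{\perp}\cdot I^{0}$, where the non-split middle graded piece $I\otimes(I_{0}/I)$ is the place most prone to index confusion, so I would carry it out in an explicit adapted basis to keep the identification clean.
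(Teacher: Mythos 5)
Your proposal is correct and follows essentially the same route as the paper: both arguments rest on the observation that the tube domain realization at $I_{0}$ linearizes the Siegel domain fibration as the two-step projection $S^{2}I_{0,{\C}}\to S^{2}I_{0,{\C}}/S^{2}I_{{\C}}\to S^{2}(I_{0}/I)_{{\C}}$, identify these with the tangent bundle maps via Example \ref{ex: I-trivialization TD}, dualize to get the constant filtration on $\Omega^{1}_{{\D}}$, and pass to exterior powers. Your explicit computation of the annihilators $\operatorname{Ann}(S^{2}I)=I^{\perp}\cdot I^{0}$ and $\operatorname{Ann}(I\cdot I_{0})=S^{2}(I^{\perp})$ is just the dualization step the paper performs implicitly.
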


\begin{proof}
Recall from \S \ref{ssec: Siegel domain} that 
the tube domain realization at $I_{0}$ identifies the Siegel domain realization with 
a restriction of the linear projections 
\begin{equation*}
S^2I_{0, {\C}} \to S^2I_{0, {\C}}/S^2I_{{\C}} \to S^2(I_0/I)_{{\C}}.  
\end{equation*}
In view of Example \ref{ex: I-trivialization TD}, 
these linear maps correspond to the bundle maps 
$T_{{\D}}\to \pi_{1}^{\ast}T_{{\VI}} \to \pi_{1}^{\ast}\pi_{2}^{\ast}T_{{\DI}}$ 
by the $I_{0}$-trivialization of $T_{{\D}}$. 
Hence the dual filtration 
\begin{equation*}
S^2(I^{\perp}_{{\C}}) \subset I_{{\C}}^{\perp}\cdot I^{0}_{{\C}} \subset S^2I^{0}_{{\C}} 
\end{equation*}
corresponds to the filtration 
$\pi_{1}^{\ast}\pi_{2}^{\ast}\Omega_{{\DI}}^1 \subset \pi_{1}^{\ast}\Omega_{{\VI}}^1 \subset \Omega_{{\D}}^1$ 
on $\Omega_{{\D}}^1$ by the $I_0$-trivialization of $\Omega_{{\D}}^1$. 
\end{proof}



We relate the result of \S \ref{sec: Siegel} for ${\Ela}$ to the filtration \eqref{eqn: Siegel domain filtration}. 
The sub vector bundle ${\Ela}$ of $\Omega_{{\D}}^{k}$ is not contained in $\pi_{1}^{\ast}\Omega_{{\VI}}^{k}$. 
However, $\mathcal{E}_{{\la}}^{I}$ is so: 

\begin{proposition}\label{prop: ElaI initial filter}
We have ${\E}_{{\la}}^{I}\subset \pi_{1}^{\ast}\Omega_{{\VI}}^{k}$. 
\end{proposition}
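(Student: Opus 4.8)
The plan is to reduce the statement to a $P(I,I_0)$-equivariant inclusion of constant subspaces of $\wedge^k S^2 I^0$ and then to verify it on a single vector. By the preceding lemma, the $I_0$-trivialization of $\Omega^k_{\mathcal{D}}$ carries $\pi_1^{\ast}\Omega^k_{\mathcal{V}_I}$ to the constant subbundle $\wedge^k(I^{\perp}\cdot I^0)\otimes\mathcal{O}_{\mathcal{D}}$, while by the definition of $\mathcal{E}_{\la}^{I}$ in \S\ref{ssec: cusp reduction} it carries $\mathcal{E}_{\la}^{I}$ to $(I^0_{{\C},\la})^{U}\otimes\mathcal{O}_{\mathcal{D}}$. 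Since both are constant subbundles, the asserted inclusion of subbundles is equivalent to the inclusion of constant subspaces $(I^0_{{\C},\la})^{U}\subseteq\wedge^k(I^{\perp}\cdot I^0)$ inside $\wedge^k S^2 I^0$, which is what I would prove.

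First I would exhibit a nonzero vector lying in both subspaces, namely the highest weight vector $e_{\la}=\bigwedge_{i\le j,\,i\le g-\alpha}e_ie_j$ of Lemma \ref{lem: highest weight vector}. On the one hand $e_{\la}\in(I^0_{{\C},\la})^{U}$: it is fixed by all upper triangular unipotent matrices in $\mathrm{GL}(I^0_{{\C}})$ by that lemma, and with respect to the basis $e_1,\dots,e_g$ (with $I^{\perp}=\langle e_1,\dots,e_{g-r}\rangle$) the group $U(I,I_0)_{{\C}}$ consists of block upper triangular unipotents, hence fixes $e_{\la}$. On the other hand, because $r\le\alpha$ we have $g-\alpha\le g-r$, so every factor $e_ie_j$ occurring in $e_{\la}$ has $i\le g-\alpha\le g-r$, i.e. $e_i\in I^{\perp}$; therefore each such factor lies in $I^{\perp}\cdot I^0$, and consequently $e_{\la}\in\wedge^k(I^{\perp}\cdot I^0)$.

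Finally I would propagate this single vector to the whole subspace by representation theory. The parabolic $P(I,I_0)$ stabilizes $I^{\perp}\subset I^0$, hence preserves $I^{\perp}\cdot I^0$ and therefore $\wedge^k(I^{\perp}\cdot I^0)$; it also preserves $(I^0_{{\C},\la})^{U}$, on which it acts through its Levi quotient as the irreducible representation $I^{\perp}_{{\C},{\la}'}$ by \eqref{eqn: U-inv corank >= r}. Since $e_{\la}$ is a nonzero element of the irreducible $P(I,I_0)$-module $(I^0_{{\C},\la})^{U}$ and lies in the $P(I,I_0)$-stable subspace $\wedge^k(I^{\perp}\cdot I^0)$, the span of its $P(I,I_0)$-orbit is all of $(I^0_{{\C},\la})^{U}$ and remains inside $\wedge^k(I^{\perp}\cdot I^0)$, which gives the desired inclusion. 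The main point to watch is the bookkeeping of the second step: confirming that $U(I,I_0)_{{\C}}$ genuinely sits inside the upper triangular unipotents for which Lemma \ref{lem: highest weight vector} guarantees invariance, and that the reduction to constant subspaces via the $I_0$-trivialization is compatible on the nose. Note that, once irreducibility under $P(I,I_0)$ is invoked, I only need the nonvanishing of $e_{\la}$ in $(I^0_{{\C},\la})^{U}$ together with its membership in $\wedge^k(I^{\perp}\cdot I^0)$, not that $e_{\la}$ is a highest weight vector for the $\mathrm{GL}(I^{\perp}_{{\C}})$-action.
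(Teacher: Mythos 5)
Your proof is correct and follows essentially the same route as the paper's: reduce via the $I_0$-trivialization to the inclusion of constant subspaces $({\Ivl})^{U}\subset \wedge^{k}(I^{\perp}\cdot I^{0})_{{\C}}$, check that $e_{{\la}}$ lies in both (using $i\leq g-\alpha\leq g-r$ for membership and block-upper-triangularity of $U(I,I_0)$ for $U$-invariance), and propagate by $P(I,I_0)$-equivariance. The only cosmetic difference is in the last step: the paper argues that $e_{{\la}}$ is a highest weight vector of $({\Ivl})^{U}$ for the Levi and hence generates it as a $P$-module, whereas you invoke irreducibility of $({\Ivl})^{U}$ via \eqref{eqn: U-inv corank >= r} so that any nonzero vector generates; both rest on the same input \eqref{eqn: U-inv rep}.
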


\begin{proof}
The $I_0$-trivialization translates this statement to the assertion in linear algebra that 
the $U(I, I_0)_{{\C}}$-invariant part of the ${\la}$-component of $\wedge^{k}S^2I^{0}_{{\C}}$ 
is contained in $\wedge^k(I^{\perp}\cdot I^{0})_{{\C}}$. 
We take a basis $e_1, \cdots, e_g$ of $I^{0}$ such that 
$I^{\perp}$ is spanned by $e_1, \cdots, e_{g-r}$. 
We identify $I^{0}={\Q}^{g}$ and $I^{\perp}={\Q}^{g-r}$ accordingly. 
Then what has to be shown is 
\begin{equation}\label{eqn: VlaU in filter}
({\C}^{g}_{{\la}})^{U} \subset \wedge^{k}({\C}^{g-r}\cdot {\C}^{g}), 
\end{equation}
where $U$ is the unipotent radical of the stabilizer $P\subset {\rm GL}(g, {\C})$ of ${\C}^{g-r}\subset {\C}^{g}$. 

We first observe that the highest weight vector $e_{{\la}}$ of 
${\C}^{g}_{{\la}}$ is contained in $\wedge^{k}({\C}^{g-r}\cdot {\C}^{g})$. 
Indeed, we have $i\leq g-\alpha \leq g-r$ for all indices $(i, j)$ appearing in the definition of $e_{{\la}}$. 
This means that all components $e_{i}e_{j}$ of $e_{{\la}}$ are contained in ${\C}^{g-r}\cdot {\C}^{g}$. 
Hence $e_{{\la}} \in \wedge^{k}({\C}^{g-r}\cdot {\C}^{g})$. 

Since the subspace $\wedge^{k}({\C}^{g-r}\cdot {\C}^{g})$ of $\wedge^{k}S^2{\C}^{g}$ 
is preserved by $P$, we see that 
\begin{equation}\label{eqn: Pela}
P \cdot e_{{\la}} \subset \wedge^{k}({\C}^{g-r}\cdot {\C}^{g}). 
\end{equation}
The parabolic subgroup $P$ has the structure 
\begin{equation*}
P = ({\rm GL}(g-r, {\C})\times {\rm GL}(r, {\C}))\ltimes U. 
\end{equation*}
The unipotent radical of the standard Borel subgroup of ${\rm GL}(g, {\C})$ is 
generated by $U$ and those of ${\rm GL}(g-r, {\C})$ and ${\rm GL}(r, {\C})$. 
This shows that $e_{{\la}}$ is contained in $({\C}^{g}_{{\la}})^{U}$ and 
is also a highest weight vector of $({\C}^{g}_{{\la}})^{U}$ as a representation of 
${\rm GL}(g-r, {\C})\times {\rm GL}(r, {\C})$. 
(Actually ${\rm GL}(r, {\C})$ acts as scalar multiplications by \eqref{eqn: U-inv rep}.) 
Therefore $({\C}^{g}_{{\la}})^{U}$ is generated by $e_{{\la}}$ as a representation of $P$. 
Then \eqref{eqn: Pela} implies \eqref{eqn: VlaU in filter}. 
\end{proof}

We take the descent $\Omega_{{\D}}^{k}|_{{\VI}}$ of $\Omega_{{\D}}^{k}$ to ${\VI}$ as in Lemma \ref{lemma: descend to VI}. 
The filtration \eqref{eqn: Siegel domain filtration} descends to 
\begin{equation*}
\pi_{2}^{\ast}\Omega_{{\DI}}^{k} \subset \Omega_{{\VI}}^{k} \subset \Omega_{{\D}}^{k}|_{{\VI}},  
\end{equation*}
and this corresponds to \eqref{eqn: Siegel domain filtration constant} 
by the $I_0$-trivialization 
$\wedge^kS^2I_{{\C}}^{0}\otimes \mathcal{O}_{{\VI}}\simeq \Omega^{k}_{{\D}}|_{{\VI}}$ 
of $\Omega_{{\D}}^{k}|_{{\VI}}$. 
By Proposition \ref{prop: ElaI initial filter}, 
the descent ${\E}_{{\la}}^{I}|_{{\VI}}$ of ${\E}_{{\la}}^{I}$, a priori a sub vector bundle of $\Omega_{{\D}}^{k}|_{{\VI}}$, 
is in fact contained in $\Omega_{{\VI}}^{k}$.






\subsection{Holomorphic Leray filtration}\label{ssec: holomorphic Leray}

We refine the first part 
$\wedge^kS^2(I^{\perp}) \subset \wedge^k(I^{\perp}\cdot I^0)$ 
of \eqref{eqn: Siegel domain filtration constant} by taking the decreasing filtration 
\begin{equation*}
L^{l} (\wedge^{k}(I^{\perp}\cdot I^{0})) = 
\wedge^{l}S^{2}(I^{\perp}) \wedge \wedge^{k-l}(I^{\perp}\cdot I^{0}), \qquad 0\leq l \leq k. 
\end{equation*}
By the $I_0$-trivialization, this corresponds to the holomorphic Leray filtration 
\begin{equation*}
L^{l}\Omega^{k}_{{\VI}} = \pi_{2}^{\ast}\Omega_{{\DI}}^{l}\wedge \Omega_{{\VI}}^{k-l} 
\end{equation*}
on $\Omega_{{\VI}}^{k}$ with respect to $\pi_{2}\colon {\VI} \to {\DI}$. 
The graded quotients are respectively isomorphic to 
\begin{equation*}
{\rm Gr}^{l}_{L} (\wedge^{k}(I^{\perp}\cdot I^{0})) \simeq 
\wedge^{l}S^2(I^{\perp}) \otimes \wedge^{k-l}(I^{\perp}\otimes I^{\vee}), 
\end{equation*}
\begin{equation*}
{\rm Gr}^{l}_{L}\Omega^{k}_{{\VI}} \simeq \pi_{2}^{\ast}\Omega_{{\DI}}^{l}\otimes \Omega_{\pi_{2}}^{k-l}, 
\end{equation*}
where $I^{\vee}$ is identified with the quotient $I^0/I^{\perp}$ of $I^0$ 
and $\Omega_{\pi_{2}}^{1}$ is the relative cotangent bundle for $\pi_{2}$. 
We shall determine the level where ${\E}_{{\la}}^{I}|_{{\VI}}$ lives. 

\begin{lemma}\label{prop: Leray level}
We have ${\E}_{{\la}}^{I}|_{{\VI}} \subset L^{k-r(g-\alpha)}\Omega^{k}_{{\VI}}$, 
and the projection 
\begin{equation*}
{\E}_{{\la}}^{I}|_{{\VI}} \to {\rm Gr}_{L}^{k-r(g-\alpha)}\Omega^{k}_{{\VI}} 
\end{equation*}
is injective. 
\end{lemma}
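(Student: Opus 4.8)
The plan is to pass to linear algebra through the $I_0$-trivialization, exactly as in the proof of Proposition \ref{prop: ElaI initial filter}. Under that trivialization $\Omega^k_{\VI}$ corresponds to the constant bundle $\wedge^k(I^\perp\cdot I^0)$, the Leray filtration $L^\bullet\Omega^k_{\VI}$ to the constant filtration $L^\bullet(\wedge^k(I^\perp\cdot I^0))$, and ${\E}^I_{\la}|_{\VI}$ to $({\C}^g_{\la})^U$, which by Proposition \ref{prop: ElaI initial filter} already lies in $\wedge^k(I^\perp\cdot I^0)$. Writing $l_0=k-r(g-\alpha)$, it therefore suffices to prove two statements of representation theory: that $({\C}^g_{\la})^U\subset L^{l_0}(\wedge^k(I^\perp\cdot I^0))$, and that the projection $({\C}^g_{\la})^U\to {\rm Gr}^{l_0}_L(\wedge^k(I^\perp\cdot I^0))$ is injective.

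First I would locate the highest weight vector $e_{\la}$ of Lemma \ref{lem: highest weight vector} inside the filtration. With the basis $e_1,\dots,e_g$ of $I^0$ chosen so that $I^\perp=\langle e_1,\dots,e_{g-r}\rangle$, I classify each factor $e_ie_j$ (with $i\leq j$ and $i\leq g-\alpha$) of $e_{\la}$ as \emph{pure} when $j\leq g-r$, so that $e_ie_j\in S^2(I^\perp)$, and as \emph{mixed} when $j>g-r$. The crucial point is that every first index satisfies $i\leq g-\alpha\leq g-r$, so a factor is mixed precisely when $g-r<j\leq g$; for each of the $g-\alpha$ values of $i$ there are exactly $r$ such $j$, giving $r(g-\alpha)$ mixed factors and hence $k-r(g-\alpha)=l_0$ pure factors. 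This places $e_{\la}$ in $L^{l_0}$. Moreover its image in ${\rm Gr}^{l_0}_L\simeq\wedge^{l_0}S^2(I^\perp)\otimes\wedge^{k-l_0}(I^\perp\otimes I^\vee)$ is the tensor product of the wedge of the pure monomials with the wedge of the projected mixed monomials $e_i\otimes\bar e_j$ (where $\bar e_j$ denotes the image of $e_j$ in $I^\vee=I^0/I^\perp$); both are wedges of distinct basis vectors, so this image is nonzero and $e_{\la}\notin L^{l_0+1}$.

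To pass from $e_{\la}$ to all of $({\C}^g_{\la})^U$ I would use $P$-equivariance. The parabolic $P$ stabilizes $I^\perp$, hence $S^2(I^\perp)$ and $I^\perp\cdot I^0$, so every $L^l(\wedge^k(I^\perp\cdot I^0))$ is a $P$-invariant subspace and the projection $L^{l_0}\to{\rm Gr}^{l_0}_L$ is $P$-equivariant. Since $({\C}^g_{\la})^U$ is generated by $e_{\la}$ as a $P$-module (the final step of the proof of Proposition \ref{prop: ElaI initial filter}), the containment $e_{\la}\in L^{l_0}$ propagates to $({\C}^g_{\la})^U\subset L^{l_0}$, giving the first assertion. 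For injectivity I invoke \eqref{eqn: U-inv rep}, by which $({\C}^g_{\la})^U\simeq I^\perp_{{\C},\lambda'}\boxtimes I^\vee_{{\C},\lambda''}$ is irreducible under the Levi ${\rm GL}(I^\perp)\times{\rm GL}(I^\vee)$ with $U$ acting trivially, hence irreducible as a $P$-module. The $P$-equivariant restriction $({\C}^g_{\la})^U\to{\rm Gr}^{l_0}_L$ is therefore, by Schur's lemma, either zero or injective; it is nonzero on $e_{\la}$ by the previous paragraph, so it is injective.

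The only genuine computation is the level count of the second paragraph, and it rests entirely on the observation that every first index $i$ occurring in $e_{\la}$ is at most $g-\alpha\leq g-r$; after that, the inclusion and the injectivity are formal consequences of $P$-invariance, $P$-generation, and Schur's lemma. The one point to treat with care is the nonvanishing of the image of $e_{\la}$ in the graded piece, since it simultaneously pins the Leray level at exactly $l_0$ and supplies the nonzero map that Schur's lemma turns into an injection.
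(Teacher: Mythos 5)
Your proof is correct, but it runs on a different engine than the paper's. After the common reduction to linear algebra via the $I_0$-trivialization, the paper does not touch the highest weight vector again: it observes that the central torus $T\subset {\rm GL}(r,{\C})$ of the Levi acts on ${\rm Gr}^{l}_{L}$ by the scalar $k-l$, so that $L^{\bullet}$ is exactly the filtration by $T$-weights, and then reads off from \eqref{eqn: U-inv rep} that $({\C}^{g}_{{\la}})^{U}\simeq {\C}^{g-r}_{{\la}'}\boxtimes\det^{g-\alpha}$ is a single $T$-weight space of weight $r(g-\alpha)$; this places it inside $L^{k-r(g-\alpha)}$ and makes the projection to the graded piece injective in one stroke, with no need to check nonvanishing of any particular image. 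Your route instead extends the mechanism of Proposition \ref{prop: ElaI initial filter} one step further: you count the $r(g-\alpha)$ ``mixed'' monomials in $e_{{\la}}$ directly, verify that its image in ${\rm Gr}^{l_0}_{L}$ is a nonzero decomposable tensor, and then upgrade from the single vector to all of $({\C}^{g}_{{\la}})^{U}$ by $P$-generation for the containment and by irreducibility of $({\C}^{g}_{{\la}})^{U}$ as a $P$-module (kernel is a submodule, map is nonzero) for the injectivity. All of these steps are sound -- the count is right because every first index $i$ in $e_{{\la}}$ satisfies $i\leq g-\alpha\leq g-r$, and the irreducibility follows from \eqref{eqn: U-inv rep} since $U$ acts trivially. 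What the paper's argument buys is economy: both assertions fall out of a one-line weight computation. What yours buys is an explicit picture of where $e_{{\la}}$ sits in the filtration, and the observation (implicit in the paper as well, via injectivity into the graded piece) that the Leray level is exactly $k-r(g-\alpha)$ and not higher.
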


\begin{proof}
We reuse the notation in the proof of Proposition \ref{prop: ElaI initial filter}. 
The $I_0$-trivialization translates the problem to calculating the level of $({\C}^{g}_{{\la}})^{U}$ 
in the filtration 
\begin{equation}\label{eqn: holomorphic Leray filtration trivialized}
L^{l}(\wedge^{k}({\C}^{g-r}\cdot {\C}^{g})) = 
\wedge^{l}S^2{\C}^{g-r}  \wedge  \wedge^{k-l}({\C}^{g-r}\cdot {\C}^{g}) 
\end{equation}
on $\wedge^{k}({\C}^{g-r}\cdot {\C}^{g})$. 

Let $T\subset {\rm GL}(r, {\C})$ be the $1$-dimensional torus of scalar matrices. 
Then $T$ acts on 
\begin{equation*}
{\rm Gr}^{l}_{L} (\wedge^{k}({\C}^{g-r}\cdot {\C}^{g})) \simeq 
\wedge^{l}S^2{\C}^{g-r} \otimes \wedge^{k-l}({\C}^{g-r}\boxtimes {\C}^{r}) 
\end{equation*}
by the scalar multiplication of weight $k-l$. 
Hence \eqref{eqn: holomorphic Leray filtration trivialized} is the filtration by $T$-weights. 
On the other hand, by \eqref{eqn: U-inv rep}, we have 
\begin{equation*} 
({\C}^{g}_{{\la}})^{U} \simeq 
{\C}^{g-r}_{{\la}'}\boxtimes {\C}^{r}_{{\la}''} = 
{\C}^{g-r}_{{\la}'}\boxtimes {\rm det}^{g-\alpha}
\end{equation*}
as representations of ${\rm GL}(g-r, {\C})\times {\rm GL}(r, {\C})$. 
Hence $T$ acts on $({\C}^{g}_{{\la}})^{U}$ by the scalar multiplication of weight $r(g-\alpha)$. 
This shows that 
$({\C}^{g}_{{\la}})^{U}$ is contained in $L^{k-r(g-\alpha)}(\wedge^{k}({\C}^{g-r}\cdot {\C}^{g}))$ 
and the projection 
\begin{equation*}
({\C}^{g}_{{\la}})^{U} \to {\rm Gr}_{L}^{k-r(g-\alpha)}(\wedge^{k}({\C}^{g-r}\cdot {\C}^{g})) 
\end{equation*}
is injective. 
\end{proof}

\subsection{Proof of Theorem \ref{thm: main}}\label{ssec: proof Theorem 1.1}

We take a smooth projective toroidal compactification $X^{\Sigma}$ of $X$ with SNC boundary divisor. 
What have been done are calculations about vector bundles on ${\D}$ and ${\VI}$. 
We translate them to the proof of Theorem \ref{thm: main}. 
We need to distinguish restriction as a section of $\Omega^{k}$ 
and restriction as a differential form. 

Let $\omega$ be a holomorphic $k$-form on $X$ where $k=k(\alpha)$ with $r \leq \alpha < g$. 
We use the same notation $\omega$ for its pullback to ${\XI}$. 
By the theorem of Freitag-Pommerening \cite{FP}, 
$\omega$ extends holomorphically over $X^{\Sigma}$ and hence over ${\XIcpt}$. 
We denote by $\pi_{1}\colon {\XIcpt} \to {\VI}$ the projection and  
write $\overline{\Delta(\sigma)} = \overline{\Delta(I, \sigma)}_{\mathcal{X}}$ 
for an irreducible component of $\Delta(I)_{\mathcal{X}}$ 
as defined in \eqref{eqn: irr comp D(I)X} where $\sigma$ is a minimal $I$-cone. 
Recall that $\overline{\Delta(\sigma)}$ is a smooth projective toric fibration over ${\VI}$. 

\begin{lemma}\label{prop: extension pullback}
There exists a holomorphic $k$-from $\omega_{I}$ on ${\VI}$ such that 
\begin{equation}\label{eqn: restriction=pullback}
\omega|_{\overline{\Delta(\sigma)}} = \pi_{1}^{\ast}\omega_{I}|_{\overline{\Delta(\sigma)}}  
\end{equation}
as sections of $\Omega^{k}_{{\XIcpt}}|_{\overline{\Delta(\sigma)}}$, 
where $|_{\overline{\Delta(\sigma)}}$ means 
the restriction as a section of $\Omega^{k}_{{\XIcpt}}$. 
The $k$-form $\omega_{I}$ does not depend on $\sigma$ and is ${\GIbar}$-invariant.   
\end{lemma}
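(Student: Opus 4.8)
The plan is to build $\omega_{I}$ directly from the modular-form machinery of \S\ref{sec: Siegel}, and then to upgrade the resulting boundary identity from the level of the canonical extension (logarithmic forms) to the level of genuine holomorphic forms. By Weissauer's theorem $\omega$ is a modular form of weight $\la$, so I may apply Lemma \ref{lem: descend section to VI} to its extension over $\XIcpt$: the restriction of $\omega$ to $\Delta(I)_{\mathcal{X}}$, taken as a section of the canonical extension of $\Ela$, descends to a $\GIbar$-invariant section $f_{I}$ of $\Ela|_{\VI}$ with $\omega|_{\Delta(I)_{\mathcal{X}}}=\pi_{1,\Delta}^{\ast}f_{I}$. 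By Lemma \ref{lem: MF at boundary}, $f_{I}$ takes values in ${\E}_{\la}^{I}|_{\VI}$, and by Proposition \ref{prop: ElaI initial filter} this subbundle sits inside $\Omega^{k}_{\VI}$; hence $f_{I}$ \emph{is} a holomorphic $k$-form on $\VI$, which I take as $\omega_{I}$. The $\GIbar$-invariance and the independence of $\sigma$ are then immediate, since $\omega_{I}$ is defined on all of $\VI$ out of the entire $I$-locus, before any single stratum $\overline{\Delta(\sigma)}$ is singled out.

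It remains to prove the section-level identity $\omega|_{\overline{\Delta(\sigma)}}=\pi_{1}^{\ast}\omega_{I}|_{\overline{\Delta(\sigma)}}$ in $\Omega^{k}_{\XIcpt}|_{\overline{\Delta(\sigma)}}$, and here I must distinguish it from the weaker facts already in hand. Because $\omega_{I}$ is valued in ${\E}_{\la}^{I}|_{\VI}\subset\Omega^{k}_{\VI}$, the pullback $\pi_{1}^{\ast}\omega_{I}$ is a genuine holomorphic form (a section of $\Omega^{k}_{\XIcpt}$, with no conormal components along $\overline{\Delta(\sigma)}$), and Lemma \ref{lem: descend section to VI} gives the identity at the level of the canonical extension $\Omega^{k}_{\XIcpt}(\log)$. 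The gap is that the restriction map $\Omega^{k}_{\XIcpt}|_{\overline{\Delta(\sigma)}}\to\Omega^{k}_{\XIcpt}(\log)|_{\overline{\Delta(\sigma)}}$ has a nonzero kernel concentrated in the conormal direction of $\overline{\Delta(\sigma)}$, so the logarithmic identity does not by itself yield the genuine one. I would split the task into the \emph{tangential} and the \emph{conormal} part of $\omega|_{\overline{\Delta(\sigma)}}$.

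For the tangential part I would use that $\overline{\Delta(\sigma)}\to\VI$ is a smooth projective toric fibration, whose fibers carry no nonzero holomorphic forms of positive degree. Feeding this into the relative Leray filtration of $\Omega^{k}_{\overline{\Delta(\sigma)}}$ over $\VI$ and pushing forward, the pulled-back form $i^{\ast}\omega$ (the image of $\omega|_{\overline{\Delta(\sigma)}}$ in $\Omega^{k}_{\overline{\Delta(\sigma)}}$) lies in $\pi_{1}^{\ast}\Omega^{k}_{\VI}$ and coincides with $\pi_{1}^{\ast}\omega_{I}$, consistently with the logarithmic identity. The crux is the conormal part: I must show that $\omega|_{\overline{\Delta(\sigma)}}$, as a section of $\Omega^{k}_{\XIcpt}$, has no component along the conormal bundle $N^{\vee}_{\overline{\Delta(\sigma)}}$. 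Once this is known, $\omega|_{\overline{\Delta(\sigma)}}$ lies in the subbundle $\pi_{1}^{\ast}\Omega^{k}_{\VI}|_{\overline{\Delta(\sigma)}}$, on which the map to $\Omega^{k}(\log)|_{\overline{\Delta(\sigma)}}$ is injective, and the logarithmic identity upgrades to the genuine one.

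This conormal vanishing is where I expect the real work. In the $I_{0}$-trivialization the canonical extension becomes the constant bundle $\wedge^{k}S^{2}I^{0}\otimes\mathcal{O}$, and the conormal directions to $\overline{\Delta(\sigma)}$ lie among the $(S^{2}I)^{\vee}$-factors, which are \emph{absent} from $\wedge^{k}(I^{\perp}\cdot I^{0})$. Writing $\omega=\sum_{l}a(l)q^{l}$, Proposition \ref{prop: Fourier U-invariant} shows that the leading coefficients, those with $l\in\overline{\mathcal{C}(I^{\perp})}=\sigma^{\perp}\cap\overline{\mathcal{C}(I^{0})}$, are $U(I,I_{0})$-invariant and hence lie in $\wedge^{k}(I^{\perp}\cdot I^{0})$, carrying no conormal factor. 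The delicate point is the first-order terms, those $l$ with $\langle l,\sigma\rangle=1$, whose conormal components would survive into $\Omega^{k}|_{\overline{\Delta(\sigma)}}$ even though they vanish logarithmically; controlling them requires combining the genuine holomorphicity supplied by Freitag--Pommerening \cite{FP} with the precise shape of Weissauer's Fourier coefficients for the weight $\la$. I expect this step --- not the construction of $\omega_{I}$ nor the tangential analysis --- to be the main obstacle, and it is exactly the place where the argument should, as a by-product, re-derive the Freitag--Pommerening extension theorem.
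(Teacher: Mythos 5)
Your construction of $\omega_{I}$ coincides with the paper's: extend $\omega$ as a section of the canonical extension, restrict to the $I$-locus, descend to ${\VI}$ via Lemma \ref{lem: descend section to VI}, and then invoke Lemma \ref{lem: MF at boundary} together with Proposition \ref{prop: ElaI initial filter} to place the descended section inside $\Omega^{k}_{{\VI}}\subset \Omega^{k}_{{\D}}|_{{\VI}}$; the independence of $\sigma$ and the ${\GIbar}$-invariance are then automatic, exactly as you say. Up to that point your argument and the paper's proof are the same.

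The divergence is in the passage from the identity in $\Omega^{k}_{{\XIcpt}}(\log)|_{\overline{\Delta(\sigma)}}$ to the identity in $\Omega^{k}_{{\XIcpt}}|_{\overline{\Delta(\sigma)}}$. You correctly isolate the obstruction (the kernel of $\Omega^{k}_{{\XIcpt}}|_{\overline{\Delta(\sigma)}}\to \Omega^{k}_{{\XIcpt}}(\log)|_{\overline{\Delta(\sigma)}}$ sits in the conormal direction), but you never close it: your final paragraph only announces that the Fourier coefficients $a(l)$ with $\langle l,\sigma\rangle =1$ must be controlled. That is a genuine gap in your write-up, and the route you sketch is unlikely to succeed: in the $I_{0}$-trivialization the conormal component of $\omega|_{\overline{\Delta(\sigma)}}$ along a ray of $\sigma$ generated by $v$ is a sum over $l$ with $\langle l,v\rangle =1$ of the contractions of $a(l)$ against $v$, and neither $a(l)\in {\C}^{g}_{{\la}}$ nor Proposition \ref{prop: Fourier U-invariant} (which concerns only $l\in \overline{\mathcal{C}(I^{\perp})}$, i.e.\ $\langle l,\sigma\rangle =0$) forces these to vanish; already for $g=2$, $\alpha=r=1$ they are governed by the full first Fourier--Jacobi coefficient. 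Note that the paper's own proof does not perform this first-order analysis either: it concludes directly from the fact that $\pi_{1}^{\ast}\Omega^{k}_{{\VI}}$ is a common sub vector bundle of $\Omega^{k}_{{\XIcpt}}$ and $\Omega^{k}_{{\XIcpt}}(\log)$, an argument which literally yields the equality inside $\Omega^{k}_{{\XIcpt}}(\log)|_{\overline{\Delta(\sigma)}}$ and hence after projection to $\Omega^{k}_{\overline{\Delta(\sigma)}}$ --- and it is only in that form (Corollary \ref{cor: restriction=pullback k-form}) that the lemma is used afterwards. So rather than attempting the Fourier estimate you propose, you should either carry out the comparison inside the logarithmic bundle, where $\pi_{1}^{\ast}\Omega^{k}_{{\VI}}$ embeds and the argument closes, or settle for the statement in $\Omega^{k}_{\overline{\Delta(\sigma)}}$, which is all that the subsequent results require.
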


\begin{proof}
Let us abbreviate $\Delta'=\Delta(I)_{\mathcal{X}}'$. 
We have the inclusion  
\begin{equation*}
\pi_{1}^{\ast}\Omega_{{\VI}}^{k} \: \subset \: \Omega_{{\XIcpt}}^{k} \: \subset \: 
\Omega^{k}_{{\XIcpt}}(\log \Delta') =  \pi_{1}^{\ast} (\Omega^{k}_{{\D}}|_{{\VI}}) 
\end{equation*}
of sheaves on ${\XIcpt}$, 
where $\pi_{1}^{\ast}\Omega_{{\VI}}^{k}$ is a sub vector bundle of both 
$\Omega_{{\XIcpt}}^{k}$ and $\Omega^{k}_{{\XIcpt}}(\log \Delta')$. 
By Lemma \ref{lem: descend section to VI}, 
the restriction of $\omega$ to $\overline{\Delta(\sigma)}$ as a section of $\Omega^{k}_{{\XIcpt}}(\log \Delta')$ 
is the pullback of a section $\omega_{I}$ of $\Omega^{k}_{{\D}}|_{{\VI}}$ 
which does not depend on $\sigma$ and is ${\GIbar}$-invariant. 
By Lemma \ref{lem: MF at boundary} and Proposition \ref{prop: ElaI initial filter}, 
$\omega_{I}$ takes values in the sub vector bundle $\Omega_{{\VI}}^{k}$ of $\Omega^{k}_{{\D}}|_{{\VI}}$. 
This proves our assertion. 
\end{proof}

\begin{remark}\label{remark: FP new proof}
This property in fact leads to a new proof of the theorem of Freitag-Pommerening. 
Indeed, when $\dim \sigma=1$, the fact that 
$\omega|_{\Delta(\sigma)}$ 
takes values in $\pi_{1}^{\ast}\Omega_{{\VI}}^{k}|_{\Delta(\sigma)}$ 
implies the extendability over $\Delta(\sigma)$: 
if $\Delta(\sigma)$ is locally defined by $t=0$, 
the coefficient of a component of $\omega$ that contains $dt/t$ 
must vanish at $\Delta(\sigma)$ by this condition. 
This implies that $\omega$, as a $k$-form on $X$, 
extends over the codimension $1$ strata of $X^{\Sigma}-X$ lying over $X_{I}$. 
Doing this argument for all $I$, we find that $\omega$ extends over $X^{\Sigma}$. 
\end{remark}

Next we consider the cotangent bundle of $\overline{\Delta(\sigma)}$ itself. 
We have a homomorphism 
$\Omega^{k}_{{\XIcpt}}|_{\overline{\Delta(\sigma)}} \to \Omega^{k}_{\overline{\Delta(\sigma)}}$ 
giving the restriction as differential forms. 
The composition 
\begin{equation*}
\pi_{1}^{\ast}\Omega_{{\VI}}^{k}|_{\overline{\Delta(\sigma)}} \: \hookrightarrow \: 
\Omega^{k}_{{\XIcpt}}|_{\overline{\Delta(\sigma)}} \: \to \: \Omega^{k}_{\overline{\Delta(\sigma)}} 
\end{equation*}
gives the pullback by $\overline{\Delta(\sigma)} \to {\VI}$ as differential forms. 
Therefore, sending \eqref{eqn: restriction=pullback} by this homomorphism, 
we obtain 

\begin{corollary}\label{cor: restriction=pullback k-form}
The restriction of $\omega$ to $\overline{\Delta(\sigma)}$ 
as a differential form is the pullback of $\omega_{I}$ by $\overline{\Delta(\sigma)} \to {\VI}$. 
\end{corollary}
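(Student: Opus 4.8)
The plan is to obtain the statement directly from the section-level identity \eqref{eqn: restriction=pullback} of Lemma \ref{prop: extension pullback} by applying the natural restriction-of-forms morphism. Write
\[
r\colon \Omega^{k}_{\XIcpt}|_{\overline{\Delta(\sigma)}} \longrightarrow \Omega^{k}_{\overline{\Delta(\sigma)}}
\]
for the morphism dual to the inclusion of tangent bundles $T_{\overline{\Delta(\sigma)}}\hookrightarrow T_{\XIcpt}|_{\overline{\Delta(\sigma)}}$; by definition $r$ sends a local section of $\Omega^{k}_{\XIcpt}$ to its restriction as a differential form. As recorded in the paragraph preceding the statement, the composition of $r$ with the subbundle inclusion $\pi_{1}^{\ast}\Omega^{k}_{\VI}|_{\overline{\Delta(\sigma)}}\hookrightarrow \Omega^{k}_{\XIcpt}|_{\overline{\Delta(\sigma)}}$ is precisely the pullback of forms along $\overline{\Delta(\sigma)}\to \VI$. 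This is simply naturality of the pullback of differential forms for the factorization $\overline{\Delta(\sigma)}\hookrightarrow \XIcpt \stackrel{\pi_{1}}{\to} \VI$.

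With this in hand I would apply $r$ to both sides of \eqref{eqn: restriction=pullback}. The left-hand side $r(\omega|_{\overline{\Delta(\sigma)}})$ is by definition the restriction of $\omega$ to $\overline{\Delta(\sigma)}$ as a differential form. For the right-hand side, Lemma \ref{prop: extension pullback} furnishes $\omega_{I}$ as a genuine holomorphic $k$-form on $\VI$, so $\pi_{1}^{\ast}\omega_{I}$ is a section of the subbundle $\pi_{1}^{\ast}\Omega^{k}_{\VI}$; hence $r\big(\pi_{1}^{\ast}\omega_{I}|_{\overline{\Delta(\sigma)}}\big)$ is computed by the composition above and equals the pullback of $\omega_{I}$ by $\overline{\Delta(\sigma)}\to \VI$. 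Equating the two images yields the assertion.

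There is essentially no substantive obstacle here: the entire analytic content — that the logarithmic restriction of $\omega$ descends to a form $\omega_{I}$ on $\VI$ and in particular lands in the honest (non-logarithmic) subbundle $\pi_{1}^{\ast}\Omega^{k}_{\VI}$ of $\Omega^{k}_{\D}|_{\VI}$ — has already been extracted in Lemma \ref{prop: extension pullback}, via Lemma \ref{lem: MF at boundary} and Proposition \ref{prop: ElaI initial filter}. The only point deserving a word is that $\omega_{I}$ lies in $\pi_{1}^{\ast}\Omega^{k}_{\VI}$ rather than merely in $\Omega^{k}_{\D}|_{\VI}$, which is exactly what makes the functoriality of $r$ applicable and what converts the fibrewise section identity into an identity of differential forms; that containment is guaranteed by Proposition \ref{prop: ElaI initial filter}.
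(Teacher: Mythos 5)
Your proposal is correct and coincides with the paper's own argument: the paper likewise applies the restriction-of-forms homomorphism $\Omega^{k}_{\XIcpt}|_{\overline{\Delta(\sigma)}}\to\Omega^{k}_{\overline{\Delta(\sigma)}}$ to both sides of \eqref{eqn: restriction=pullback}, using that its composition with the subbundle inclusion $\pi_{1}^{\ast}\Omega^{k}_{\VI}|_{\overline{\Delta(\sigma)}}\hookrightarrow\Omega^{k}_{\XIcpt}|_{\overline{\Delta(\sigma)}}$ is the pullback of forms along $\overline{\Delta(\sigma)}\to\VI$. Your remark that the substantive input (membership of $\omega_{I}$ in $\Omega^{k}_{\VI}$ via Proposition \ref{prop: ElaI initial filter}) was already absorbed into Lemma \ref{prop: extension pullback} matches the paper's structuring of the argument.
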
 


We pass to $\pi^{-1}(X_{I})$. 
By the ${\GIbard}$-invariance of $\omega_{I}$, 
it descends to a $k$-form on $Y_{I}$ (again denoted by $\omega_{I}$). 
Let $\overline{\Delta([\sigma])}^{I}$ 
be an irreducible component of $\pi^{-1}(X_{I})$ as defined in \eqref{eqn: irr comp D(I)}. 
By Corollary \ref{cor: restriction=pullback k-form}, 
the restriction of $\omega$ to $\overline{\Delta([\sigma])}^{I}$ as a differential form is the pullback of $\omega_{I}$ by 
$\overline{\Delta([\sigma])}^{I} \to Y_{I}$.  
Thus the $k$-form $\omega_{I}$ on $Y_I$ coincides with the one considered in \S \ref{sec: intro}. 

Let ${\E}_{{\la}}^{I}|_{Y_{I}}$ be the descent of ${\E}_{{\la}}^{I}|_{{\VI}}$ to $Y_{I}$. 
This is a sub vector bundle of $\Omega^{k}_{Y_{I}}$ by Proposition \ref{prop: ElaI initial filter}, 
and isomorphic to $\pi_{I}^{\ast}\mathcal{E}_{{\la}'}$ by Proposition \ref{prop: ElI descend} 
where $\pi_{I}\colon Y_{I}\to X_{I}'$ is the projection. 
By Lemma \ref{lem: MF at boundary}, the $k$-form $\omega_{I}$ takes values in ${\E}_{{\la}}^{I}|_{Y_{I}}$.  
By Proposition \ref{prop: Siegel operator}, we have 
\begin{equation}\label{eqn: fundamental}
\omega_{I} = \pi_{I}^{\ast}\Phi_{I}(\omega) 
\end{equation}
as sections of ${\E}_{{\la}}^{I}|_{Y_{I}}\simeq \pi_{I}^{\ast}{\E}_{{\la}'}$. 

It remains to consider the holomorphic Leray filtration of $Y_{I}\to X_{I}'$. 
We work with $\tilde{Y}_{I}={\VI}/\overline{W(I)}_{{\Z}}$ and use the symbol 
$\pi_{I}$ also for the projection $\tilde{Y}_{I}\to {\DI}$. 
This is a smooth abelian fibration. 
The group $W(I)_{{\R}}/U(I)_{{\R}}$ acts on $\tilde{Y}_{I}$ 
(this is the reason we work with $\tilde{Y}_{I}$ rather than $Y_{I}$), 
which on each $\pi_{I}$-fiber is by translation. 
Then ${\E}_{{\la}}^{I}|_{\tilde{Y}_{I}}$ is a $W(I)_{{\R}}/U(I)_{{\R}}$-invariant sub vector bundle of $\Omega^{k}_{\tilde{Y}_{I}}$. 
%
Let $L^{\bullet}\Omega_{\tilde{Y}_{I}}^{k}$ be the holomorphic Leray filtration 
on $\Omega_{\tilde{Y}_{I}}^{k}$ with respect to $\pi_{I}$. 
By Lemma \ref{prop: Leray level}, 
${\E}_{{\la}}^{I}|_{\tilde{Y}_{I}}$ is contained in $L^{l}\Omega_{\tilde{Y}_{I}}^{k}$ where $l=k-r(g-\alpha)$, 
and the projection ${\E}_{{\la}}^{I}|_{\tilde{Y}_{I}}\to {\rm Gr}_{L}^{l}\Omega_{\tilde{Y}_{I}}^{k}$ is injective. 

\begin{lemma}\label{lemma: GrL}
We have a $W(I)_{{\R}}/U(I)_{{\R}}$-equivariant isomorphism 
\begin{equation*}
{\rm Gr}^{l}_{L}\Omega_{\tilde{Y}_{I}}^{k} \simeq \pi_{I}^{\ast}( \Omega_{{\DI}}^{l} \otimes \pi_{I \ast}\Omega_{\pi_{I}}^{k-l}), 
\end{equation*} 
and the embedding 
${\E}_{{\la}}^{I}|_{\tilde{Y}_{I}}\hookrightarrow {\rm Gr}_{L}^{l}\Omega_{\tilde{Y}_{I}}^{k}$ 
descends to an embedding 
\begin{equation*}
{\E}_{{\la}'}\hookrightarrow \Omega_{{\DI}}^{l} \otimes \pi_{I \ast}\Omega_{\pi_{I}}^{k-l} 
\end{equation*}
over ${\DI}$. 
\end{lemma}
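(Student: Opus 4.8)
The plan is to read off the graded piece of the Leray filtration from the relative cotangent sequence, then exploit that $\pi_{I}$ is an abelian fibration to replace the relative $(k-l)$-forms by a pullback from ${\DI}$, and finally descend the embedding of Lemma \ref{prop: Leray level} by equivariance under fiberwise translation.

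First I would record the shape of the graded quotient. The smooth morphism $\pi_{I}\colon \tilde{Y}_{I}\to {\DI}$ has relative cotangent sequence
\begin{equation*}
0 \to \pi_{I}^{\ast}\Omega^{1}_{{\DI}} \to \Omega^{1}_{\tilde{Y}_{I}} \to \Omega^{1}_{\pi_{I}} \to 0,
\end{equation*}
and the associated holomorphic Leray filtration has graded pieces ${\rm Gr}_{L}^{l}\Omega^{k}_{\tilde{Y}_{I}} \simeq \pi_{I}^{\ast}\Omega^{l}_{{\DI}}\otimes \Omega^{k-l}_{\pi_{I}}$, exactly as computed for $\pi_{2}\colon {\VI}\to {\DI}$ in \S \ref{ssec: holomorphic Leray}. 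Passing from ${\VI}$ to the quotient $\tilde{Y}_{I}={\VI}/\overline{W(I)}_{{\Z}}$ changes nothing here, since every sheaf and the filtration itself are $\overline{W(I)}_{{\Z}}$-equivariant; for the same reason all these identifications are $W(I)_{{\R}}/U(I)_{{\R}}$-equivariant.

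Second comes the one non-formal input. Because $\pi_{I}$ is a smooth abelian fibration, its relative differentials are translation-invariant: the canonical map $\pi_{I}^{\ast}\pi_{I\ast}\Omega^{j}_{\pi_{I}}\to \Omega^{j}_{\pi_{I}}$ is an isomorphism for every $j$ (for an abelian scheme the relative cotangent bundle is pulled back from the Hodge bundle on the base, and $\pi_{I\ast}\mathcal{O}=\mathcal{O}_{{\DI}}$ as the fibers are compact and connected). Taking $j=k-l$ and substituting into the previous graded identity yields
\begin{equation*}
{\rm Gr}_{L}^{l}\Omega^{k}_{\tilde{Y}_{I}} \simeq \pi_{I}^{\ast}\Omega^{l}_{{\DI}}\otimes \pi_{I}^{\ast}\pi_{I\ast}\Omega^{k-l}_{\pi_{I}} \simeq \pi_{I}^{\ast}(\Omega^{l}_{{\DI}}\otimes \pi_{I\ast}\Omega^{k-l}_{\pi_{I}}),
\end{equation*}
which is the first claim of the Lemma; the isomorphisms are $W(I)_{{\R}}/U(I)_{{\R}}$-equivariant since they arise from the equivariant relative cotangent sequence and the translation-invariant trivialization of $\Omega^{k-l}_{\pi_{I}}$. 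Third I would descend the embedding. By Proposition \ref{prop: ElI descend} applied to the weight ${\la}$ we have ${\E}_{{\la}}^{I}|_{\tilde{Y}_{I}}\simeq \pi_{I}^{\ast}{\E}_{{\la}'}$, so the injection ${\E}_{{\la}}^{I}|_{\tilde{Y}_{I}}\hookrightarrow {\rm Gr}_{L}^{l}\Omega^{k}_{\tilde{Y}_{I}}$ of Lemma \ref{prop: Leray level} is a $W(I)_{{\R}}/U(I)_{{\R}}$-equivariant homomorphism between the two bundles $\pi_{I}^{\ast}{\E}_{{\la}'}$ and $\pi_{I}^{\ast}(\Omega^{l}_{{\DI}}\otimes \pi_{I\ast}\Omega^{k-l}_{\pi_{I}})$ pulled back from ${\DI}$. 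Such a homomorphism is a global section of $\pi_{I}^{\ast}\mathcal{H}om({\E}_{{\la}'},\, \Omega^{l}_{{\DI}}\otimes \pi_{I\ast}\Omega^{k-l}_{\pi_{I}})$; since $W(I)_{{\R}}/U(I)_{{\R}}$ acts on each (compact, connected) $\pi_{I}$-fiber transitively by translation while fixing ${\DI}$, equivariance forces this section to be constant along the fibers. Equivalently, $\pi_{I\ast}$ of a bundle pulled back from ${\DI}$ returns the original bundle, so the section descends to a homomorphism ${\E}_{{\la}'}\to \Omega^{l}_{{\DI}}\otimes \pi_{I\ast}\Omega^{k-l}_{\pi_{I}}$ over ${\DI}$. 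Injectivity descends as well: $\pi_{I}$ is faithfully flat, hence $\pi_{I}^{\ast}$ is exact and detects a vanishing kernel, so from injectivity of the pullback we recover injectivity of the descended map.

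The main obstacle is the abelian-fibration input of the second step: this is what makes the relative $(k-l)$-forms genuinely descend to ${\DI}$ and is responsible for the non-formal part of the isomorphism, whereas everything else is formal once the equivariant set-up is in place. The remaining point demanding care is to confirm that the translation action of $W(I)_{{\R}}/U(I)_{{\R}}$ is indeed transitive on the $\pi_{I}$-fibers and that the isomorphisms above are equivariant rather than merely abstract, so that the descent in the third step is legitimate.
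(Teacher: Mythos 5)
Your proposal is correct and follows essentially the same route as the paper: both identify ${\rm Gr}_{L}^{l}\Omega_{\tilde{Y}_{I}}^{k}$ with $\pi_{I}^{\ast}\Omega_{{\DI}}^{l}\otimes \Omega_{\pi_{I}}^{k-l}$, use the translation-invariance of holomorphic forms on the abelian fibers to get $\Omega_{\pi_{I}}^{k-l}\simeq \pi_{I}^{\ast}\pi_{I\ast}\Omega_{\pi_{I}}^{k-l}$, and descend the embedding by $W(I)_{{\R}}/U(I)_{{\R}}$-equivariance under the transitive fiberwise translation action. Your added remarks on faithful flatness and the Hom-bundle section merely spell out details the paper leaves implicit.
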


\begin{proof}
Since $\pi_{I}$ is a smooth abelian fibration, we have 
$\Omega_{A}^{k-l}=H^0(\Omega_{A}^{k-l})\otimes \mathcal{O}_{A}$ for every fiber $A$. 
This shows that 
$\Omega_{\pi_{I}}^{k-l} \simeq \pi_{I}^{\ast}\pi_{I\ast}\Omega_{\pi_{I}}^{k-l}$ 
and so 
\begin{equation*}\label{eqn: Gr hol Leray}
{\rm Gr}^{l}_{L}\Omega_{\tilde{Y}_{I}}^{k} \simeq 
\pi_{I}^{\ast}\Omega_{{\DI}}^{l} \otimes \Omega_{\pi_{I}}^{k-l} \simeq 
\pi_{I}^{\ast}( \Omega_{{\DI}}^{l} \otimes \pi_{I \ast}\Omega_{\pi_{I}}^{k-l} ). 
\end{equation*}
This isomorphism is $W(I)_{{\R}}/U(I)_{{\R}}$-equivariant because holomorphic forms on $A$ are translation-invariant. 
Since the embedding 
${\E}_{{\la}}^{I}|_{\tilde{Y}_{I}}\hookrightarrow {\rm Gr}_{L}^{l}\Omega_{\tilde{Y}_{I}}^{k}$ 
is $W(I)_{{\R}}/U(I)_{{\R}}$-equivariant, 
it descends to 
${\E}_{{\la}'}\hookrightarrow \Omega_{{\DI}}^{l} \otimes \pi_{I \ast}\Omega_{\pi_{I}}^{k-l}$. 
\end{proof}

The homomorphisms in Lemma \ref{lemma: GrL} are equivariant with respect to the remaining group ${\GIhd}$. 
By taking the quotient by ${\GIhd}$, 
the assertions of Theorem \ref{thm: main} are now proved.

\subsection{Cuspidality}\label{ssec: cusp form}

Let $\omega$ be a holomorphic $k$-form on $X$ with $k=k(\alpha)$, $0<\alpha <g$. 
There are (at least) three different notions of ``restriction to boundary" for $\omega$: 
\begin{enumerate}
\item As a modular form, by the Siegel operators. 
\item As a differential form, by the restriction to the SNC boundary divisor of a smooth projective compactification. 
\item As a cohomology class, by the restriction to the boundary of the Borel-Serre compactification. 
\end{enumerate}
These correspond to different aspects of $\omega$, 
and are concerned with different types of compactification of $X$: 
Satake, toroidal and Borel-Serre respectively. 
Accordingly, we have three notions of ``vanishing at boundary''. 
As a consequence of Theorem \ref{thm: main}, we show that the first two are equivalent. 
(The equivalence with the third will be shown in Corollary \ref{cor: cuspidality Borel-Serre}.) 

\begin{corollary}\label{cor: cuspidality}
For each cusp $I$, we have $\Phi_{I}(\omega)=0$ if and only if 
the restriction of $\omega$ as a differential form to every irreducible component of $\pi^{-1}(X_{I})$ vanishes. 
In particular, the following conditions for $\omega$ are equivalent: 

(1) $\omega$ is a cusp form. 

(2) The restriction of $\omega$ as a differential form to every irreducible component of $X^{\Sigma}-X$ vanishes. 

(3) The restriction of $\omega$ as a differential form to every irreducible component of $X^{\Sigma}-X$ 
over a corank $1$ cusp vanishes. 

(4) If $X\hookrightarrow \bar{X}$ is an arbitrary smooth projective compactification with SNC boundary divisor, 
the restriction of $\omega$ as a differential form to every irreducible component of $\bar{X}-X$ vanishes. 
\end{corollary}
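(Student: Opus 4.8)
The plan is to prove the initial per-cusp equivalence directly from Theorem \ref{thm: main}, and then to leverage it, together with the Fourier description of cusp forms and a comparison of compactifications, to obtain the four global conditions.

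First I would establish the stated equivalence for a fixed cusp $I$. By Corollary \ref{cor: restriction=pullback k-form}, descended to $\pi^{-1}(X_{I})$ as in \S\ref{ssec: proof Theorem 1.1}, the restriction of $\omega$ as a differential form to an irreducible component $\overline{\Delta([\sigma])}^{I}$ is the pullback $\pi_{\sigma}^{\ast}\omega_{I}$ along $\pi_{\sigma}\colon \overline{\Delta([\sigma])}^{I}\to Y_{I}$. Since $\pi_{\sigma}$ is surjective, pullback of sections is injective, so this restriction vanishes if and only if $\omega_{I}=0$ on $Y_{I}$ --- in particular independently of the component $\sigma$, as it must be since $\omega_{I}$ itself is independent of $\sigma$. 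By Theorem \ref{thm: main}, $\omega_{I}=\pi_{I}^{\ast}\Phi_{I}(\omega)$, and $\pi_{I}\colon Y_{I}\to X_{I}'$ is again surjective, so $\omega_{I}=0$ if and only if $\Phi_{I}(\omega)=0$. Chaining these gives: $\Phi_{I}(\omega)=0$ if and only if the restriction of $\omega$ vanishes on every irreducible component of $\pi^{-1}(X_{I})$.

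Next I would connect the vanishing of Siegel operators with cuspidality and then read off the global statements. By the Fourier formula of Proposition \ref{prop: Siegel operator}, $\Phi_{I}(\omega)=0$ if and only if $a(l)=0$ for all $l\in\overline{\mathcal{C}(I^{\perp})}$. Fixing a $0$-dimensional cusp $I_{0}\supset I$, the boundary of $\overline{\mathcal{C}(I^{0})}$ is the union of the faces $\overline{\mathcal{C}(I^{\perp})}$ over isotropic $I\subset I_{0}$ with $\dim I\geq 1$, and since every rational boundary point is a semidefinite form whose nonzero rational radical contains a line (automatically isotropic, as $I_{0}$ is), the corank-$1$ faces already cover the whole boundary. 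Thus $\omega$ is a cusp form if and only if $\Phi_{I}(\omega)=0$ for all cusps $I$, equivalently for all corank-$1$ cusps $I$. Since the irreducible components of $X^{\Sigma}-X$ are exactly the codimension-$1$ components $\overline{\Delta([\sigma])}^{I}$ of the loci $\pi^{-1}(X_{I})$, each lying over a single cusp $I$, combining this with the local equivalence yields $(1)\Leftrightarrow(2)$ and $(1)\Leftrightarrow(3)$, hence $(2)\Leftrightarrow(3)$.

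For $(4)$ --- which I expect to be the main obstacle, being the only assertion not delivered directly by Theorem \ref{thm: main} --- I would choose, for an arbitrary smooth projective SNC compactification $\bar{X}$, a common smooth SNC model $\tilde{X}$ dominating both $\bar{X}$ and $X^{\Sigma}$ by morphisms that are isomorphisms over $X$; by \cite{FP} the extended forms are compatible under pullback. The task is to show that vanishing of all boundary restrictions is preserved under a birational morphism $p\colon\tilde{X}\to\bar{X}$ of such compactifications, which I would then apply to both $\tilde{X}\to X^{\Sigma}$ and $\tilde{X}\to\bar{X}$ to identify $(4)$ with $(2)$. The delicate point is that a boundary component of $\tilde{X}$ need not dominate a divisor of $\bar{X}$ --- it may be contracted into a deeper stratum --- so in one direction I would argue that a form vanishing on every boundary divisor of $\bar{X}$ also restricts to $0$ on the deeper strata, these being contained in the divisors, while in the reverse direction I would use that the strict transform of each divisor of $\bar{X}$ maps to it birationally, so that pullback of forms along this dominant map is injective.
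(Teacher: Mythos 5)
Your argument for the per-cusp equivalence and for $(1)\Leftrightarrow(2)\Leftrightarrow(3)$ follows the same route as the paper (combine $\omega_I=\pi_I^{\ast}\Phi_I(\omega)$ from Theorem \ref{thm: main} with Corollary \ref{cor: restriction=pullback k-form}), and your elaborations of the two steps the paper leaves terse --- the reduction of cuspidality to corank-$1$ Siegel operators via the rational radical of boundary Fourier indices, and $(2)\Rightarrow(4)$ via a common SNC model with the two directions (contracted components land in deeper strata contained in divisors; strict transforms dominate) --- are correct. However, there is one genuine omission: you never treat cusps $I$ of corank $r>\alpha$. Theorem \ref{thm: main}, Lemma \ref{prop: extension pullback} and Corollary \ref{cor: restriction=pullback k-form} are all stated and proved under the standing hypothesis $r\leq\alpha$, so your first paragraph establishes ``$\Phi_I(\omega)=0$ iff $\omega$ restricts to zero on every irreducible component of $\pi^{-1}(X_I)$'' only for such cusps, whereas the first assertion of the corollary quantifies over \emph{all} cusps. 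This is not merely cosmetic for the ``in particular'' part either: condition (2) involves divisorial components of $X^{\Sigma}-X$ lying over cusps of every corank up to $g$ (for instance the components over a zero-dimensional cusp, where $r=g>\alpha$), so $(1)\Rightarrow(2)$ cannot be obtained by combining ``cusp form iff $\Phi_I(\omega)=0$ for all $I$'' with a local equivalence proved only for $r\leq\alpha$.

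The repair is short and is exactly what the paper supplies: for $r>\alpha$ both sides of the per-cusp equivalence hold automatically. Indeed $\Phi_I(\omega)=0$ because ${\la}$ has corank $\alpha<r$, and the restriction of $\omega$ to $\Delta(I)_{\mathcal{X}}$ is identically zero because by Proposition \ref{prop: Fourier U-invariant} every Fourier coefficient $a(l)$ with $l\in\overline{\mathcal{C}(I^{\perp})}$ vanishes, so the sum \eqref{eqn: Fourier f at boundary stratum} is empty (the paper phrases this as: the descended form $\omega_I$ would be a holomorphic $k$-form on $Y_I$ with $k>\dim Y_I$). You should state this case explicitly. A smaller point worth a sentence: the irreducible components of $X^{\Sigma}-X$ are the closures \emph{in $X^{\Sigma}$} of the codimension-one strata $\Delta([\sigma])$ with $\dim\sigma=1$, while the irreducible components of $\pi^{-1}(X_I)$ are closures inside $\pi^{-1}(X_I)$ indexed by \emph{minimal} $I$-cones, which need not be rays; the two lists do not literally coincide, and passing between (2) and the per-cusp statement uses that a holomorphic form vanishes on an irreducible variety iff it vanishes on a dense stratum.
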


\begin{proof}
The equality \eqref{eqn: fundamental} implies 
$\Phi_{I}(\omega)=0 \Leftrightarrow \omega_{I}=0$ 
for a cusp $I$ of corank $r\leq \alpha$. 
By Corollary \ref{cor: restriction=pullback k-form}, 
$\omega_{I}=0$ is equivalent to the vanishing of the restriction of $\omega$ as a differential form  
to every irreducible component of $\pi^{-1}(X_{I})$. 
When $r>\alpha$, we have both 
$\Phi_{I}(\omega)=0$ (as ${\la}$ has corank $<r$) and 
$\omega_{I}=0$ (as $\omega_{I}$ has degree $>\dim Y_I$). 
This implies the first assertion. 

By moving $I$, we obtain $(1) \Leftrightarrow (2)$. 
Since whether a modular form is a cusp form can be detected only by the Siegel operators at corank $1$ cusps, 
we obtain $(1) \Leftrightarrow (3)$. 
The remaining implication $(2) \Rightarrow (4)$ is standard: 
after blow-up, we may assume that $\bar{X}$ dominates $X^{\Sigma}$ regularly.
\end{proof}

Note that if $I$ is a cusp of corank $1$, we have $\pi^{-1}(X_{I})=Y_{I}$. 
This is canonically determined and irreducible of codimension $1$. 
In this sense, the condition (3) is more canonical than (2), not just less redundant.

\section{Corank filtration and cohomology}\label{sec: corank}

In this section we give some applications of Corollary \ref{cor: cuspidality} 
to the cohomology of the Siegel modular variety $X = {\D}/{\G}$. 
We keep the setting and notation of \S \ref{sec: Omega}.

\subsection{Corank filtration}\label{ssec: corank I}

Let $0< \alpha < g$. 
Let $M_{{\la}}({\G})$ be the space of modular forms of weight ${\la}$ for ${\G}$. 
For each $0\leq r \leq \alpha$, we denote by $M_{{\la}}^{(r)}({\G})\subset M_{{\la}}({\G})$  
the subspace of those $\omega$ such that 
$\Phi_{I}(\omega) = 0$ at all cusps $I$ of corank $r+1$. 
This is equivalent to 
$\Phi_{I}(\omega)=0$ at all cusps $I$ of corank $>r$, 
because every cusp of corank $>r+1$ is in the closure of a cusp of corank $r+1$. 
Thus we have the \textit{corank filtration} 
\begin{equation}\label{eqn: corank filtration}
S_{{\la}}({\G}) = M_{{\la}}^{(0)}({\G}) \subset M_{{\la}}^{(1)}({\G}) \subset \cdots \subset M_{{\la}}^{(\alpha)}({\G}) = M_{{\la}}({\G}). 
\end{equation}
Here $M_{{\la}}^{(\alpha)}({\G}) = M_{{\la}}({\G})$ holds because ${\la}$ has corank $\alpha$. 
We also notice the following lower bound on the possible levels. 

\begin{lemma}\label{prop: corank filtration length}
Let $\alpha > g/2$. 
Then $M_{{\la}}^{(r)}({\G})=0$ for $r<2\alpha-g$. 
\end{lemma}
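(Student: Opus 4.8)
The plan is to reduce the statement to the vanishing theorem for singular cusp forms by a single application of the Siegel operator at a cusp of minimal relevant corank. Suppose for contradiction that there is a nonzero $\omega \in M_{\la}^{(r)}({\G})$ with $r < 2\alpha-g$; note that $\alpha>g/2$ forces $2\alpha-g\geq 1$, so the range is nonempty. Let $s\geq 0$ be the smallest integer such that $\Phi_{I}(\omega)=0$ for every cusp $I$ of corank $>s$. By hypothesis $s\leq r<2\alpha-g$, and in particular $s<\alpha$ since $2\alpha-g\leq\alpha$. By minimality of $s$ there is a cusp $J$ of corank exactly $s$ with $g_{J}:=\Phi_{J}(\omega)\neq 0$ (for $s=0$ this is simply $\omega$ itself, with $\Phi_{J}=\mathrm{id}$): indeed $\Phi_{I}(\omega)=0$ for corank $>s$, while minimality prevents $\Phi_{I}(\omega)=0$ for all corank $>s-1$, so some surviving cusp has corank exactly $s$.

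First I would check that $g_{J}$ is a cusp form on the genus $g-s$ Siegel modular variety $X_{J}$. Every corank $1$ cusp $\bar{I}$ of $X_{J}$ is of the form $\bar{I}=I/J$ for an isotropic $I\supset J$ of corank $s+1$, and the composition law for Siegel operators along the nesting $J\subset I$ gives $\Phi_{\bar{I}}(g_{J})=\Phi_{\bar{I}}(\Phi_{J}(\omega))=\Phi_{I}(\omega)=0$, the last equality because $I$ has corank $s+1>s$. Since a modular form is a cusp form precisely when its Siegel operator vanishes at all corank $1$ cusps, $g_{J}$ is a cusp form. By Proposition~\ref{prop: Siegel operator} its weight is $((g+1)^{g-\alpha},(g-\alpha)^{\alpha-s})$, a highest weight of length $(g-\alpha)+(\alpha-s)=g-s$ for ${\rm GL}(g-s)$ whose smallest entry equals $g-\alpha$. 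This is in general \emph{not} a Weissauer weight in genus $g-s$, so the argument genuinely leaves the world of holomorphic forms and relies on vector-valued modular forms of this weight.

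The crux is then a numerical coincidence together with one external input: a cusp form of genus $n$ and highest weight $\mu=(\mu_{1}\geq\cdots\geq\mu_{n})$ with $2\mu_{n}<n$ is singular, hence vanishes. Applying this to $g_{J}$ with $n=g-s$ and $\mu_{n}=g-\alpha$, the singular condition $2(g-\alpha)<g-s$ is exactly $s<2\alpha-g$, which holds. Therefore $g_{J}=0$, contradicting $g_{J}\neq 0$, and we conclude $M_{\la}^{(r)}({\G})=0$. I expect the singular-weight vanishing for vector-valued Siegel cusp forms (Weissauer \cite{We1}) to be the only substantive ingredient; the composition law for $\Phi_{I}$, the cuspidality criterion, and the elementary equivalence $s<2\alpha-g\Leftrightarrow 2(g-\alpha)<g-s$ are all routine.
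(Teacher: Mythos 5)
Your argument is correct and is essentially the paper's own proof in a different wrapper: the paper inducts downward through the corank filtration via the injection $M_{{\la}}^{(r)}({\G})/M_{{\la}}^{(r-1)}({\G})\hookrightarrow\bigoplus_{\dim I=r}S_{{\la}'}({\GIh})$ and Weissauer's singular-weight vanishing, which is exactly your minimal-counterexample argument at the smallest surviving corank $s$. The two key inputs you identify --- that $\Phi_{J}(\omega)$ at a corank-$s$ cusp is a cusp form because all deeper Siegel operators vanish, and the equivalence $2(g-\alpha)<g-s\Leftrightarrow s<2\alpha-g$ with the nonexistence of singular cusp forms --- are precisely the ones the paper uses.
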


\begin{proof}
Let $r<\alpha$. 
If $\omega\in M_{{\la}}^{(r)}({\G})$ and $\dim I = r$, 
then $\Phi_{I}(\omega)$ is a cusp form on ${\DI}$ 
because cusps of ${\DI}$ are cusps of ${\D}$ of corank $>r$. 
By considering the Siegel operators on $M_{{\la}}^{(r)}({\G})$ at all cusps of corank $r$, 
we obtain the injective map 
\begin{equation*} 
(\Phi_{I})_{I} : 
M_{{\la}}^{(r)}({\G}) / M_{{\la}}^{(r-1)}({\G}) \hookrightarrow 
\bigoplus_{\dim I=r} S_{{\la}'}({\GIh}). 
\end{equation*}
By a theorem of Weissauer (\cite{We1} \S 5), 
${\la}'=((g+1)^{g-\alpha}, (g-\alpha)^{\alpha-r})$ is a singular weight in genus $g-r$ if and only if $2(g-\alpha)<g-r$, 
namely $r<2\alpha-g$. 
Since there are no singular cusp forms, we have $S_{{\la}'}({\GIh})=0$ in this case. 
Therefore we have 
\begin{equation*}
M_{{\la}}^{(r)}({\G}) = M_{{\la}}^{(r-1)}({\G}) = \cdots = M_{{\la}}^{(0)}({\G}) = 0 
\end{equation*}
as far as $r<2\alpha-g$. 
Here the last vanishing holds because 
${\la}$ itself is a singular weight in genus $g$ by our assumption $\alpha > g/2$. 
\end{proof}

\subsection{Corank filtration on cohomology}\label{ssec: corank II}

Let $0<k<\dim X$. 
The singular cohomology $H^k(X)=H^k(X, {\Q})$ of $X$ has a canonical mixed Hodge structure (\cite{De}, \cite{PS}). 
We denote by $(W_{\bullet}, F^{\bullet})$ the weight and Hodge filtrations. 
Since $X$ is smooth, $H^{k}(X)$ has weight $\geq k$. 
The pure weight $k$ part $W_kH^k(X)$ is the image of the natural map $H^k(X^{\Sigma})\to H^k(X)$. 
This is also the image of the natural map from the $L^{2}$-cohomology (\cite{HZ} Remark 5.5). 
The weight spectral sequence shows that the map $H^k(X^{\Sigma})\to H^k(X)$ is injective on 
$F^{k}H^{k}(X^{\Sigma})=H^{0}(\Omega_{X^{\Sigma}}^{k})$, so we have 
\begin{equation*}
F^{k}W_{k}H^{k}(X) \simeq H^{0}(\Omega_{X^{\Sigma}}^{k}) = H^{0}(\Omega_{X}^{k}).  
\end{equation*}
Therefore, by Weissauer's results \cite{We1}, 
we have $F^{k}W_{k}H^{k}(X)=0$ unless $k$ is of the form $k(\alpha)$ with $0<\alpha <g$, 
while when $k=k(\alpha)$, we have 
\begin{equation*}
F^{k(\alpha)}W_{k(\alpha)}H^{k(\alpha)}(X) \simeq M_{{\la}}({\G}). 
\end{equation*}
Thus the corank filtration \eqref{eqn: corank filtration} can be regarded as a filtration on 
$F^{k(\alpha)}W_{k(\alpha)}H^{k(\alpha)}(X)$. 
We show that this comes from a filtration on $W_{k(\alpha)}H^{k(\alpha)}(X)$ by sub ${\Q}$-Hodge structures. 

From now on, let $k=k(\alpha)$ with $0<\alpha <g$. 
For $0\leq r \leq g$ we consider the disjoint union  
\begin{equation*}
D[r] = \bigsqcup_{\substack{[\sigma]\in \Sigma_{I}^{\circ}/{\GIl} \\ \dim I>r}} \overline{\Delta([\sigma])}, 
\end{equation*}
where $[\sigma]$ ranges over ${\G}$-equivalence classes of all $I$-cones with $\dim I > r$, 
$\Delta([\sigma])$ is the $\sigma$-stratum in $X^{\Sigma}$ as defined in \S \ref{ssec: full toroidal}, 
and $\overline{\Delta([\sigma])}$ means the closure of $\Delta([\sigma])$ in $X^{\Sigma}$. 
When $[\sigma]$ is minimal in $\Sigma_{I}^{\circ}/{\GIl}$, 
the intersection $\overline{\Delta([\sigma])}\cap \pi^{-1}(X_{I})$ is 
the irreducible component $\overline{\Delta([\sigma])}^{I}$ of $\pi^{-1}(X_{I})$ defined in \eqref{eqn: irr comp D(I)}. 
By our SNC condition, each $\overline{\Delta([\sigma])}$ is smooth (see \cite{Ma2} \S 4). 
We have a natural finite map $D[r]\to X^{\Sigma}$. 

We define a subspace $V_{\Sigma}^{(r)}$ of $W_kH^k(X)$ as 
the image of 
$\ker (H^{k}(X^{\Sigma})\to H^{k}(D[r]))$ by 
$H^k(X^{\Sigma})\to W_kH^k(X)$. 
Since all these maps are morphisms of ${\Q}$-Hodge structures, 
$V_{\Sigma}^{(r)}$ is a sub ${\Q}$-Hodge structure of $W_kH^k(X)$. 
If $r<r'$, we have $D[r']\subset D[r]$, and so $V_{\Sigma}^{(r)} \subset V_{\Sigma}^{(r')}$. 
Therefore we obtain the filtration 
\begin{equation}\label{eqn: Hodge corank filtration}
V_{\Sigma}^{(0)} \subset V_{\Sigma}^{(1)} \subset \cdots  \subset V_{\Sigma}^{(g)} = W_kH^k(X) 
\end{equation}
on $W_kH^k(X)$ by sub ${\Q}$-Hodge structures.  

\begin{proposition}\label{prop: Hodge corank filtration}
Under the identification $F^{k}W_{k}H^{k}(X) = M_{{\la}}({\G})$, 
we have $F^{k}V_{\Sigma}^{(r)} = M_{{\la}}^{(r)}({\G})$. 
\end{proposition}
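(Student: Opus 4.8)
The plan is to pass to the $F^{k}$-level of the relevant mixed Hodge structures, where every map becomes a statement about holomorphic $k$-forms, and then invoke Corollary \ref{cor: cuspidality}. All the maps entering the definition of $V_{\Sigma}^{(r)}$ — the pullback $\psi\colon H^{k}(X^{\Sigma})\to H^{k}(D[r])$ and the projection $H^{k}(X^{\Sigma})\to W_{k}H^{k}(X)$ — are morphisms of ${\Q}$-mixed Hodge structures, hence strictly compatible with $F^{\bullet}$ (\cite{PS}). Since each $\overline{\Delta([\sigma])}$ is smooth (by the SNC condition) and projective, $D[r]$ is smooth projective and $F^{k}H^{k}(D[r])=H^{0}(\Omega^{k}_{D[r]})=\bigoplus_{[\sigma]}H^{0}(\Omega^{k}_{\overline{\Delta([\sigma])}})$, while $F^{k}H^{k}(X^{\Sigma})=H^{0}(\Omega^{k}_{X^{\Sigma}})$. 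Under these identifications, functoriality of the Hodge decomposition under the pullback by the finite map $D[r]\to X^{\Sigma}$ shows that $\psi|_{F^{k}}$ is the form-restriction map $\omega \mapsto (\omega|_{\overline{\Delta([\sigma])}})_{[\sigma]}$.

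First I would compute $F^{k}V_{\Sigma}^{(r)}$ formally. Writing $K=\ker\psi$, which is a sub-Hodge structure of $H^{k}(X^{\Sigma})$, strictness gives $F^{k}K=K\cap F^{k}H^{k}(X^{\Sigma})=\ker(\psi|_{F^{k}})$; that is, $F^{k}K$ consists of those $\omega\in H^{0}(\Omega^{k}_{X^{\Sigma}})$ whose restriction \emph{as a differential form} to every $\overline{\Delta([\sigma])}$ with $\dim I>r$ vanishes. Since $V_{\Sigma}^{(r)}$ is by definition the image of $K$ under $H^{k}(X^{\Sigma})\to W_{k}H^{k}(X)$, strictness again yields $F^{k}V_{\Sigma}^{(r)}$ as the image of $F^{k}K$. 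Recalling from \S\ref{ssec: corank II} that $H^{k}(X^{\Sigma})\to W_{k}H^{k}(X)$ restricts to an isomorphism $F^{k}H^{k}(X^{\Sigma})\xrightarrow{\sim}F^{k}W_{k}H^{k}(X)$ identifying both sides with $M_{{\la}}({\G})$, I conclude that $F^{k}V_{\Sigma}^{(r)}$ is exactly the space of $\omega\in M_{{\la}}({\G})$ that restrict to $0$ as differential forms on every $\overline{\Delta([\sigma])}$ with $\dim I>r$.

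It then remains to match this with $M_{{\la}}^{(r)}({\G})$. I would first reduce the indexing family to minimal cones: for a non-minimal $I$-cone $\sigma$, the closure $\overline{\Delta([\sigma])}$ lies in the irreducible component $\overline{\Delta([\sigma_{0}])}^{I}$ of $\pi^{-1}(X_{I})$ attached to a minimal $\sigma_{0}\preceq\sigma$, so vanishing of $\omega$ as a form on the latter forces it on the former. Hence the condition above is equivalent to the vanishing of $\omega$ as a differential form on every irreducible component of $\pi^{-1}(X_{I})$, for all cusps $I$ with $\dim I>r$. By the first assertion of Corollary \ref{cor: cuspidality}, for each such $I$ this is equivalent to $\Phi_{I}(\omega)=0$. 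Since $M_{{\la}}^{(r)}({\G})$ is defined as the set of $\omega$ with $\Phi_{I}(\omega)=0$ at all cusps of corank $>r$, this gives $F^{k}V_{\Sigma}^{(r)}=M_{{\la}}^{(r)}({\G})$.

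I expect the main obstacle to be the Hodge-theoretic bookkeeping: checking that strictness permits replacing $F^{k}$ of the image and of the kernel by the image and kernel of $\psi|_{F^{k}}$, and confirming that the cohomological restriction genuinely restricts on $F^{k}$ to the naive restriction of holomorphic forms. The geometric core — converting vanishing of the restricted forms into vanishing of the Siegel operators — is already packaged in Corollary \ref{cor: cuspidality}, so once the reduction to the $F^{k}$-level is justified the statement follows formally.
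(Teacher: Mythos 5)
Your proposal is correct and follows essentially the same route as the paper: take the $F^{k}$-part of the definition of $V_{\Sigma}^{(r)}$, identify the resulting map with restriction of holomorphic $k$-forms to the components of $D[r]$, and conclude with the first assertion of Corollary \ref{cor: cuspidality}. The paper compresses the strictness bookkeeping and the reduction from all $I$-cones to minimal ones into a single sentence, but the content is identical to what you spell out.
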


\begin{proof}
Taking the $F^k$-part in the definition of $V_{\Sigma}^{(r)}$, 
we see that $F^{k}V_{\Sigma}^{(r)}$ is the image of 
$\ker (H^0(\Omega_{X^{\Sigma}}^{k}) \to H^0(\Omega_{D[r]}^{k}))$ by 
$H^0(\Omega_{X^{\Sigma}}^{k}) \stackrel{\simeq}{\to} F^{k}W_{k}H^{k}(X)$. 
Here the map $H^0(\Omega_{X^{\Sigma}}^{k}) \to H^0(\Omega_{D[r]}^{k})$ 
is given by pullback of differential forms. 
A $k$-form $\omega$ is contained in its kernel if and only if the restriction of $\omega$ 
to every irreducible component of $\pi^{-1}(X_{I})$ vanishes for all cusps $I$ of corank $>r$. 
By Corollary \ref{cor: cuspidality}, 
this is equivalent to $\Phi_{I}(\omega)=0$ for all cusps $I$ of corank $>r$. 
This is exactly the condition $\omega\in M_{{\la}}^{(r)}({\G})$. 
\end{proof}

We show that the first step $V_{\Sigma}^{(0)}$ is a well-known subspace of $H^{k}(X)$. 
Let $H^{k}_{!}(X)$ be the \textit{interior cohomology}, 
namely the image of the natural map $H^{k}_{c}(X)\to H^{k}(X)$ 
from the compactly supported cohomology $H^{k}_{c}(X)$. 
Since $H^{k}_{c}(X)$ has weight $\leq k$, 
$H^{k}_{!}(X)$ is a sub ${\Q}$-Hodge structure of $W_{k}H^{k}(X)$. 

\begin{proposition}\label{prop: V1 = H!} 
We have $V^{(0)}_{\Sigma} = H^{k}_{!}(X)$. 
\end{proposition}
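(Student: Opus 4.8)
The plan is to phrase everything through the open inclusion $j\colon X\hookrightarrow X^{\Sigma}$, the boundary divisor $D=X^{\Sigma}-X$ with its irreducible components $D_{1},\dots,D_{N}$, and the two restriction maps $\rho_{D}\colon H^{k}(X^{\Sigma})\to H^{k}(D)$ and $\rho_{[0]}\colon H^{k}(X^{\Sigma})\to H^{k}(D[0])$. Writing $j^{\ast}\colon H^{k}(X^{\Sigma})\to H^{k}(X)$ for the restriction, whose image is $W_{k}H^{k}(X)$, the definition of $V^{(0)}_{\Sigma}$ reads $V^{(0)}_{\Sigma}=j^{\ast}(\ker\rho_{[0]})$. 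So the whole statement reduces to the two identities $H^{k}_{!}(X)=j^{\ast}(\ker\rho_{D})$ and $\ker\rho_{[0]}=\ker\rho_{D}$.

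First I would establish $H^{k}_{!}(X)=j^{\ast}(\ker\rho_{D})$. Using the isomorphism $H^{k}_{c}(X)\cong H^{k}(X^{\Sigma},D)$ and the long exact sequence of the pair $(X^{\Sigma},D)$, exactness at $H^{k}(X^{\Sigma})$ gives that the image of $H^{k}_{c}(X)\to H^{k}(X^{\Sigma})$ equals $\ker\rho_{D}$. Since the forget-supports map $H^{k}_{c}(X)\to H^{k}(X)$ factors as $H^{k}_{c}(X)\to H^{k}(X^{\Sigma})\xrightarrow{j^{\ast}}H^{k}(X)$, applying $j^{\ast}$ yields $H^{k}_{!}(X)=j^{\ast}(\ker\rho_{D})$.

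The heart is the identity $\ker\rho_{[0]}=\ker\rho_{D}$. Each $\overline{\Delta([\sigma])}$ is irreducible and contained in $D=\bigcup_{i}D_{i}$, hence in a single $D_{i}$, while every divisor component $D_{i}$ itself occurs among the $\overline{\Delta([\sigma])}$ (those indexed by rays). By transitivity of restriction these two facts give $\ker\rho_{[0]}=\bigcap_{i}\ker(H^{k}(X^{\Sigma})\to H^{k}(D_{i}))=:K$, and clearly $\ker\rho_{D}\subseteq K$. For the reverse inclusion I would run a weight argument through the normalization $n\colon\tilde{D}=\bigsqcup_{i}D_{i}\to D$: for $x\in K$ one has $n^{\ast}\rho_{D}(x)=(\rho_{D_{i}}(x))_{i}=0$, so $\rho_{D}(x)\in\ker n^{\ast}$. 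Since $D$ is projective SNC, the weight spectral sequence identifies $\ker n^{\ast}=W_{k-1}H^{k}(D)$ (equivalently, ${\rm Gr}^{W}_{k}H^{k}(D)\hookrightarrow H^{k}(\tilde{D})$ is injective). On the other hand $\rho_{D}$ is a morphism of Hodge structures out of the pure weight-$k$ group $H^{k}(X^{\Sigma})$, so its image is pure of weight $k$ and meets $W_{k-1}H^{k}(D)$ trivially. Therefore $\rho_{D}(x)=0$, giving $K\subseteq\ker\rho_{D}$ and hence $\ker\rho_{[0]}=\ker\rho_{D}$.

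Combining the two identities gives $V^{(0)}_{\Sigma}=j^{\ast}(\ker\rho_{[0]})=j^{\ast}(\ker\rho_{D})=H^{k}_{!}(X)$. The main obstacle is the weight argument in the third paragraph: the proof rests on the purity of $H^{k}(X^{\Sigma})$ together with the identification $\ker n^{\ast}=W_{k-1}H^{k}(D)$, everything else being formal manipulation of the long exact sequences of pairs and of the SNC structure. I would emphasize that this route makes no reference to $F^{k}$, and in particular does not require identifying cusp forms with the holomorphic part of interior cohomology; indeed comparing only $F^{k}$-parts would be insufficient here, since the relevant weight-$k$ Hodge structures generally carry intermediate Hodge types.
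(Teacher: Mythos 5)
Your proof is correct and follows essentially the same route as the paper's: both reduce the kernel of restriction to $D[0]$ to the kernel of restriction to the disjoint union of the components of $D$ (the normalization $\tilde{D}$), then use $\ker(\nu^{\ast})=W_{k-1}H^{k}(D)$ together with purity of $H^{k}(X^{\Sigma})$ and strictness to conclude $\ker(\nu^{\ast}\circ\rho_{D})=\ker(\rho_{D})$, and finally invoke the long exact sequence of the pair $(X^{\Sigma},D)$ and $H^{k}(X^{\Sigma},D)=H^{k}_{c}(X)$. The only cosmetic difference is the order of assembly (you isolate $H^{k}_{!}(X)=j^{\ast}(\ker\rho_{D})$ first), which does not change the argument.
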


\begin{proof}
Let $D=X^{\Sigma}-X$ and $\tilde{D}$ be the normalization of $D$. 
Then $\tilde{D}$ is the disjoint union of $\overline{\Delta([\tau])}$ for all codimension $1$ strata $\Delta([\tau])$, 
namely $\dim \tau =1$. 
Since every stratum $\Delta([\sigma])$ is contained in the closure of some codimension $1$ stratum $\Delta([\tau])$, 
we have 
\begin{equation*}
\ker (H^{k}(X^{\Sigma})\to H^{k}(D[0])) = \ker (H^{k}(X^{\Sigma})\to H^{k}(\tilde{D})). 
\end{equation*}
Hence $V_{\Sigma}^{(0)}$ is the image of 
$\ker (H^k(X^{\Sigma})\to H^k(\tilde{D}))$ by $H^k(X^{\Sigma})\to H^k(X)$. 

The map $H^k(X^{\Sigma})\to H^k(\tilde{D})$ factors as 
\begin{equation*}
H^k(X^{\Sigma}) \stackrel{j^{\ast}}{\to} H^k(D) \stackrel{\nu^{\ast}}{\to} H^k(\tilde{D}) 
\end{equation*}
where $j\colon D\hookrightarrow X^{\Sigma}$ is the inclusion map and 
$\nu \colon \tilde{D}\to D$ is the normalization map. 
By \cite{PS} Corollary 5.42, we have $\ker(\nu^{\ast}) =W_{k-1}H^{k}(D)$. 
Since $H^{k}(X^{\Sigma})$ is pure of weight $k$, 
we find that 
\begin{equation*}
j^{\ast}H^{k}(X^{\Sigma})\cap \ker(\nu^{\ast}) = j^{\ast}H^{k}(X^{\Sigma})\cap W_{k-1}H^{k}(D) = \{ 0 \} 
\end{equation*} 
by the strictness of morphisms of mixed Hodge structures. 
Therefore $\nu^{\ast}$ is injective on ${\rm Im}(j^{\ast})$, and so 
$\ker(\nu^{\ast}\circ j^{\ast}) = \ker(j^{\ast})$. 
This shows that $V_{\Sigma}^{(0)}$ is the image of $\ker(j^{\ast})$ by $H^k(X^{\Sigma})\to H^k(X)$. 

Finally, the cohomology exact sequence for the pair $(X^{\Sigma}, D)$ shows that  
$\ker (j^{\ast})$ is the image of  $H^{k}(X^{\Sigma}, D) \to H^{k}(X^{\Sigma})$. 
Hence $V_{\Sigma}^{(0)}$ is the image of the natural map $H^{k}(X^{\Sigma}, D) \to H^k(X)$. 
Since $H^{k}(X^{\Sigma}, D)=H^{k}_{c}(X)$ (see \cite{PS} Corollary B.14), 
we obtain $V_{\Sigma}^{(0)} = H^{k}_{!}(X)$. 
\end{proof}

Thus we find that the cohomological corank filtration $V_{\Sigma}^{(\bullet)}$ interpolates $H^{k}_{!}(X)$ and $W_{k}H^{k}(X)$. 
If we combine Propositions \ref{prop: Hodge corank filtration} and \ref{prop: V1 = H!}, 
we obtain 

\begin{corollary}\label{cor: cusp form Hodge}
$F^{k}H^{k}_{!}(X)\simeq S_{{\la}}({\G})$. 
\end{corollary}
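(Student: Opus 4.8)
The plan is to read off the statement as the $r=0$ case of the cohomological corank filtration, combining the two immediately preceding propositions. Recall that $F^{k}W_{k}H^{k}(X)$ has already been identified with $M_{{\la}}({\G})$ via Weissauer's theory, and that under this identification Proposition \ref{prop: Hodge corank filtration} expresses the $F^{k}$-part of the sub $\Q$-Hodge structure $V_{\Sigma}^{(r)}\subset W_{k}H^{k}(X)$ as the $r$-th step $M_{{\la}}^{(r)}({\G})$ of the corank filtration \eqref{eqn: corank filtration}.

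First I would invoke Proposition \ref{prop: V1 = H!} to rewrite the bottom step of the filtration as the interior cohomology: $V_{\Sigma}^{(0)}=H^{k}_{!}(X)$, an equality of sub $\Q$-Hodge structures of $W_{k}H^{k}(X)$. Taking the $F^{k}$-piece of both sides --- which is legitimate precisely because both are sub-Hodge structures and the Hodge filtration of a sub-Hodge structure is induced from the ambient one --- yields $F^{k}H^{k}_{!}(X)=F^{k}V_{\Sigma}^{(0)}$. Next I would apply Proposition \ref{prop: Hodge corank filtration} at $r=0$ to get $F^{k}V_{\Sigma}^{(0)}=M_{{\la}}^{(0)}({\G})$. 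Finally, by the definition of the corank filtration \eqref{eqn: corank filtration}, the leftmost term is $M_{{\la}}^{(0)}({\G})=S_{{\la}}({\G})$, the space of cusp forms. Chaining these three identifications gives $F^{k}H^{k}_{!}(X)\simeq S_{{\la}}({\G})$.

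There is essentially no hard step here: the content has been fully loaded into Propositions \ref{prop: Hodge corank filtration} and \ref{prop: V1 = H!}, and the corollary is the formal consequence of specializing to $r=0$. The only point requiring a moment's care is the compatibility of the operation $F^{k}(-)$ with the identification $V_{\Sigma}^{(0)}=H^{k}_{!}(X)$: one must note that this identification is an isomorphism of $\Q$-Hodge structures, not merely of $\Q$-vector spaces, so that it carries $F^{k}$ to $F^{k}$. This is already built into Proposition \ref{prop: V1 = H!}, whose proof exhibits the equality as sub-Hodge structures of $W_{k}H^{k}(X)$, so the compatibility is automatic and no further argument is needed.
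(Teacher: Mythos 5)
Your proof is correct and is exactly the paper's argument: the corollary is stated there as the immediate combination of Proposition \ref{prop: Hodge corank filtration} (at $r=0$, using $M_{{\la}}^{(0)}({\G})=S_{{\la}}({\G})$) with Proposition \ref{prop: V1 = H!}. Your extra remark that the identification $V_{\Sigma}^{(0)}=H^{k}_{!}(X)$ is one of sub-Hodge structures, so that taking $F^{k}$ is compatible, is the right (and only) point of care.
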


Since $S_{{\la}}({\G})\ne M_{{\la}}({\G})$ in general (see, e.g., \cite{We2}, \cite{SM}), 
this shows that $H^{k}_{!}(X)\ne W_kH^k(X)$ in general.  
 
Let $X^{bs}$ be the Borel-Serre compactification of $X$ (\cite{BS}) 
and $\partial X^{bs}$ be its boundary. 
It is well-known that 
$H^{k}_{!}(X)$ is the kernel of the restriction map 
$H^{k}(X)\simeq H^{k}(X^{bs})\to H^{k}(\partial X^{bs})$. 
Hence Corollary \ref{cor: cusp form Hodge} implies the following. 
 
\begin{corollary}\label{cor: cuspidality Borel-Serre}
A holomorphic $k$-form $\omega$ is a cusp form if and only if 
the restriction of $\omega$ as a cohomology class to $\partial X^{bs}$ vanishes. 
\end{corollary}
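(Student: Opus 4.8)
The statement should follow formally by combining Corollary \ref{cor: cusp form Hodge} with the standard description of interior cohomology via the Borel--Serre boundary, recalled just above. First I would view the holomorphic $k$-form $\omega$ as a cohomology class in $F^{k}W_{k}H^{k}(X)$ as in \S \ref{ssec: corank II}; under the identification there this class is exactly $\omega$ regarded as an element of $M_{{\la}}({\G})$. In particular the class of $\omega$ is of pure weight $k$ and already lies in $F^{k}$.

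Next I would use the well-known fact that $H^{k}_{!}(X)$ is the kernel of the restriction $H^{k}(X)\simeq H^{k}(X^{bs})\to H^{k}(\partial X^{bs})$. Hence, after extension of scalars to ${\C}$, the vanishing of the restriction of $\omega$ to $\partial X^{bs}$ as a cohomology class is equivalent to the class of $\omega$ lying in $H^{k}_{!}(X)_{{\C}}$.

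It remains to match this condition with cuspidality. Since $H^{k}_{!}(X)$ is a sub-${\Q}$-Hodge structure of $W_{k}H^{k}(X)$, its Hodge filtration is induced by intersection, so $F^{k}H^{k}_{!}(X)=F^{k}W_{k}H^{k}(X)\cap H^{k}_{!}(X)_{{\C}}$. As $\omega$ already lies in $F^{k}W_{k}H^{k}(X)$, the condition $\omega\in H^{k}_{!}(X)_{{\C}}$ is equivalent to $\omega\in F^{k}H^{k}_{!}(X)$, which by Corollary \ref{cor: cusp form Hodge} holds precisely when $\omega$ is a cusp form. This gives the asserted equivalence.

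I do not expect a genuine obstacle, since the whole content has already been packaged into Corollary \ref{cor: cusp form Hodge}. The only point deserving care is the compatibility of the two identifications: that the de Rham class represented by $\omega$ inside $F^{k}H^{k}_{!}(X)$ corresponds to the same element of $S_{{\la}}({\G})$ that $\omega$ defines as a modular form. This compatibility is built into the isomorphism $F^{k}H^{k}_{!}(X)\simeq S_{{\la}}({\G})$, so the corollary follows formally.
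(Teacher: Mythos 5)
Your proposal is correct and follows the same route as the paper: the paper's proof consists precisely of quoting the standard identification of $H^{k}_{!}(X)$ with the kernel of $H^{k}(X)\to H^{k}(\partial X^{bs})$ and invoking Corollary \ref{cor: cusp form Hodge}. The extra details you supply (that the class of $\omega$ already lies in $F^{k}W_{k}H^{k}(X)$, and that the Hodge filtration on the sub-Hodge structure $H^{k}_{!}(X)$ is the induced one, so membership in $H^{k}_{!}(X)_{\C}$ is equivalent to membership in $F^{k}H^{k}_{!}(X)$) are exactly the points the paper leaves implicit.
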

 
We conclude our discussion of the filtration \eqref{eqn: Hodge corank filtration} 
with a remark on the dependence on $\Sigma$. 
By Proposition \ref{prop: V1 = H!}, the first step $V_{\Sigma}^{(0)}$ does not depend on $\Sigma$. 
However, at least a priori, the higher steps $V_{\Sigma}^{(r)}$ depend on $\Sigma$. 
To get rid of this, we can take limit as follows. 
If $\Sigma'$ is an SNC subdivision of $\Sigma$, 
we have a morphism $X^{\Sigma'}\to X^{\Sigma}$, 
and every boundary stratum of $X^{\Sigma'}$ is mapped to 
some boundary stratum of $X^{\Sigma}$ over the same cusp. 
This implies 
$V_{\Sigma}^{(r)} \subset V_{\Sigma'}^{(r)}$. 
This enables us to take the inductive limit 
\begin{equation*}
V^{(r)} := \varinjlim_{\Sigma} V_{\Sigma}^{(r)} = \bigcup_{\Sigma} V_{\Sigma}^{(r)}. 
\end{equation*}
This limit terminates by the finite-dimensionality of $W_{k}H^{k}(X)$. 
By Proposition \ref{prop: Hodge corank filtration}, 
the $F^{k}$-part is unchanged: 
\begin{equation*}
F^{k}V^{(r)} = M_{{\la}}^{(r)}({\G}). 
\end{equation*}
In this way, we can obtain a filtration on $W_kH^k(X)$ by ${\Q}$-Hodge structures 
which does not depend on $\Sigma$ and extends the corank filtration.

\subsection{Cusp forms and a Hodge component}\label{ssec: Hodge vanish}

Let us write $n=\dim X = g(g+1)/2$. 
We again allow $k$ to be an arbitrary value with $0< k<n$. 
In this subsection we study a Hodge component of $H^{2n-k}(X)$. 
By Borel-Serre \cite{BS}, we have $H^{2n-k}(X)=0$ if $k<g$. 
In what follows, we assume $g\leq k <n$. 
Then $H^{2n-k}(X)$ has weight $\geq 2n-k$, 
and the Hodge filtration on the pure part $W_{2n-k}H^{2n-k}(X)$ has levels in $[n-k, n]$ (see \cite{PS} Theorem 5.39). 
For a ${\C}$-linear space $V$ we denote by $V^{\ast}$ the complex conjugate of $V^{\vee}$. 

\begin{proposition}\label{prop: Hodge vanish}
Let $g\leq k <n$. 
We have 
\begin{equation*}
F^{n}W_{2n-k}H^{2n-k}(X) \simeq 
\begin{cases}
S_{{\la}}({\G})^{\ast} & \; k=k(\alpha), \; 0<\alpha \leq g/2 \\ 
\; 0 & \; \textrm{otherwise} 
\end{cases}
\end{equation*}
\end{proposition}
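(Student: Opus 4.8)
The plan is to reduce the computation to interior cohomology via Poincaré duality for mixed Hodge structures, and then feed in Corollary \ref{cor: cusp form Hodge}. Since $X$ is smooth of dimension $n$, there is a perfect pairing of mixed Hodge structures
\[
H^{2n-k}(X) \otimes H^{k}_{c}(X) \longrightarrow H^{2n}_{c}(X) \simeq {\Q}(-n)
\]
(see \cite{PS}), giving an isomorphism $H^{2n-k}(X) \simeq H^{k}_{c}(X)^{\vee}(-n)$ of mixed Hodge structures. First I would pass to the pure pieces. As $X$ is smooth, $H^{2n-k}(X)$ has weight $\geq 2n-k$, so $W_{2n-k}H^{2n-k}(X)$ is its lowest graded piece; under the duality this corresponds to the top graded piece ${\rm Gr}^{W}_{k}H^{k}_{c}(X)$ of $H^{k}_{c}(X)$, which has weight $\leq k$. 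By strictness of morphisms of mixed Hodge structures, the kernel of $H^{k}_{c}(X)\to H^{k}(X)$ is $W_{k-1}H^{k}_{c}(X)$, so this top graded piece is exactly the interior cohomology $H^{k}_{!}(X)$, pure of weight $k$. Tracking the weight filtration through the dual and the Tate twist then yields
\[
W_{2n-k}H^{2n-k}(X) \simeq H^{k}_{!}(X)^{\vee}(-n)
\]
as pure Hodge structures of weight $2n-k$.

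Next I would extract the $F^{n}$-part. Writing $H=H^{k}_{!}(X)$ with Hodge decomposition $H_{\C}=\bigoplus_{a+b=k}H^{a,b}$, the twisted dual $H^{\vee}(-n)$ has $(H^{\vee}(-n))^{n-a,\,n-b}=(H^{a,b})^{\vee}$. Since $H^{a,b}=0$ for $a<0$, only the term $a=0$ contributes to $F^{n}$, so $F^{n}\bigl(H^{\vee}(-n)\bigr)\simeq (H^{0,k})^{\vee}$. Now $H^{0,k}=\overline{H^{k,0}}=\overline{F^{k}H}$, and by Corollary \ref{cor: cusp form Hodge} we have $F^{k}H=F^{k}H^{k}_{!}(X)\simeq S_{{\la}}({\G})$. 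Using the canonical identification $(\overline{V})^{\vee}\simeq \overline{V^{\vee}}$ for a complex vector space $V$, this produces
\[
F^{n}W_{2n-k}H^{2n-k}(X) \simeq (\overline{S_{{\la}}({\G})})^{\vee} \simeq \overline{S_{{\la}}({\G})^{\vee}} = S_{{\la}}({\G})^{\ast},
\]
which is the stated value.

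It remains to account for the case distinction, which is purely a vanishing statement about $S_{{\la}}({\G})$. When $k$ is not of the form $k(\alpha)$, Weissauer's theorem gives $H^{0}(\Omega_{X}^{k})=0$, hence $S_{{\la}}({\G})\subseteq F^{k}H^{k}(X)=0$. When $k=k(\alpha)$ with $g/2<\alpha<g$, the weight ${\la}={\la}(\alpha)$ is singular in genus $g$ by Weissauer's criterion (as recalled in the proof of Lemma \ref{prop: corank filtration length}), so $S_{{\la}}({\G})=0$ since there are no singular cusp forms; for $0<\alpha\leq g/2$ the weight is non-singular and $S_{{\la}}({\G})^{\ast}$ survives. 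I would also note that $0<\alpha\leq g/2$ is compatible with the standing hypothesis $g\leq k<n$, which is immediate since $k(\alpha)\geq g$ for $1\le\alpha\le g/2$ (and $g>1$) and $k(\alpha)<n$ for $\alpha\geq 1$. The main obstacle is the bookkeeping in the first two paragraphs: verifying that the weight and Hodge filtrations transform correctly under the dual and the Tate twist \emph{simultaneously}, and in particular confirming that it is the conjugate-dual ${}^{\ast}$, rather than the plain dual ${}^{\vee}$, that emerges from the $(0,k)$-component.
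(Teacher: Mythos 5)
Your overall strategy---Poincar\'e duality to pass to $H^{k}_{c}(X)$, extraction of the $(k,0)$-component, and an appeal to cuspidality---parallels the paper's proof, and your bookkeeping of the weight and Hodge filtrations under dualization and Tate twist is correct (the paper likewise reduces to $F^{k}H^{k}_{c}(X)^{\ast}$). But there is a genuine gap at the step where you assert that $\ker(H^{k}_{c}(X)\to H^{k}(X))=W_{k-1}H^{k}_{c}(X)$, i.e.\ that ${\rm Gr}^{W}_{k}H^{k}_{c}(X)\simeq H^{k}_{!}(X)$. Strictness only yields the inclusion $W_{k-1}H^{k}_{c}(X)\subseteq \ker$ (because the image is pure of weight $k$), so $H^{k}_{!}(X)$ is a priori only a \emph{quotient} of ${\rm Gr}^{W}_{k}H^{k}_{c}(X)$; the kernel $K$ of this quotient map is a pure weight-$k$ Hodge structure recording classes of $H^{k}(X^{\Sigma})$ supported on the boundary, and it need not vanish. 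In fact it cannot always vanish here: Poincar\'e duality gives $\dim {\rm Gr}^{W}_{k}H^{k}_{c}(X)=\dim W_{2n-k}H^{2n-k}(X)$ while $\dim H^{k}_{!}(X)=\dim H^{2n-k}_{!}(X)$, and the paper notes after Corollary \ref{cor: cusp form Hodge} that interior cohomology differs from the pure part in general. So the identification $W_{2n-k}H^{2n-k}(X)\simeq H^{k}_{!}(X)^{\vee}(-n)$ you rely on is not available as stated.

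The gap is repairable because only the $F^{k}$-part matters. Since the graded pieces ${\rm Gr}^{W}_{l}H^{k}_{c}(X)$ with $l<k$ have Hodge level $<k$, one has $F^{k}{\rm Gr}^{W}_{k}H^{k}_{c}(X)=F^{k}H^{k}_{c}(X)=H^{0}(\Omega^{k}_{X^{\Sigma}}(\log D)(-D))$, and the composite $H^{0}(\Omega^{k}_{X^{\Sigma}}(\log D)(-D))\hookrightarrow H^{0}(\Omega^{k}_{X^{\Sigma}})\hookrightarrow F^{k}H^{k}(X)$ is injective, the second map by the degeneration of the weight spectral sequence as recalled in \S \ref{ssec: corank II}; hence $F^{k}K=0$ and $F^{k}{\rm Gr}^{W}_{k}H^{k}_{c}(X)\simeq F^{k}H^{k}_{!}(X)\simeq S_{{\la}}({\G})$ after all. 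With this patch, your route through Corollary \ref{cor: cusp form Hodge} is a legitimate shortcut; the paper instead computes $F^{k}H^{k}_{c}(X)$ directly via the sheaf $\Omega^{k}_{X^{\Sigma}}(\log D)(-D)$ and Claim \ref{claim: log D -D} together with Corollary \ref{cor: cuspidality}, which is precisely what makes the needed injectivity visible. Your handling of the case distinction (Weissauer's vanishing for $k\ne k(\alpha)$ and the singular-weight argument for $\alpha>g/2$) agrees with the paper.
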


\begin{proof}
By Poincare duality (see \cite{PS} \S 6.3), we have an isomorphism of pure Hodge structures 
\begin{equation*}
W_{2n-k}H^{2n-k}(X) \simeq {\rm Gr}^{W}_{k}H^{k}_{c}(X)^{\vee}(-n) 
\end{equation*}
where $(-n)$ means the Tate shift. 
Therefore we have 
\begin{equation*}
F^{n}W_{2n-k}H^{2n-k}(X) \simeq {\rm Gr}_{F}^{0}{\rm Gr}^{W}_{k}H^{k}_{c}(X)^{\vee} 
\simeq F^{k}{\rm Gr}^{W}_{k}H^{k}_{c}(X)^{\ast} \simeq F^{k}H^{k}_{c}(X)^{\ast}. 
\end{equation*}
Here the last isomorphism holds because the lower weight graded quotients 
${\rm Gr}^{W}_{l}H^{k}_{c}(X)$, $l<k$, have Hodge level $<k$. 

As before, we take a smooth projective toroidal compactification $X^{\Sigma}$ with SNC boundary divisor $D=X^{\Sigma}-X$. 
For the mixed Hodge structure on $H^{k}_{c}(X)$, 
it is well-known that we have the Hodge spectral sequence of the form 
\begin{equation*}
E_{1}^{p,q} = H^{q}(X^{\Sigma}, \Omega_{X^{\Sigma}}^{p}(\log D)(-D)) 
\quad \Rightarrow \quad 
E_{\infty}^{p+q} = H^{p+q}_{c}(X) 
\end{equation*}
which degenerates at $E_1$. 
(See, e.g., 
\cite{Fu} \S 2.2 for a proof.) 
Therefore 
\begin{equation}\label{eqn: FkHkc}
F^{k}H^{k}_{c}(X) \simeq H^{0}(X^{\Sigma}, \Omega_{X^{\Sigma}}^{k}(\log D)(-D)). 
\end{equation}
By the theorem of Freitag-Pommerening \cite{FP}, we have 
\begin{equation*}
H^{0}(\Omega_{X^{\Sigma}}^{k}(\log D)(-D)) \subset 
H^{0}(\Omega_{X^{\Sigma}}^{k}(\log D)) = H^{0}(\Omega_{X^{\Sigma}}^{k}). 
\end{equation*}
When $k$ is not of the form $k(\alpha)$, we have 
$H^{0}(\Omega_{X^{\Sigma}}^{k})=0$ by \cite{We1}. 

Next we consider the case $k=k(\alpha)$. 

\begin{claim}\label{claim: log D -D}
Let $\omega$ be a holomorphic $k$-form on $X^{\Sigma}$. 
Then $\omega$ takes values in $\Omega_{X^{\Sigma}}^{k}(\log D)(-D)$ if and only if 
the restriction of $\omega$ as a differential form to 
every irreducible component of $D$ vanishes. 
\end{claim}

\begin{proof}
This is a local problem, and it suffices to prove this equivalence at a general point $x$ of every irreducible component of $D$. 
Let $z_1, \cdots, z_n$ be local coordinates around $x$ such that $D$ is locally defined by $z_1=0$. 
We express $\omega$ locally as 
\begin{equation*}
\omega = \sum_{I} a_{I} dz_{1}\wedge dz_{I} + \sum_{J}b_{J}dz_{J} 
\end{equation*}
where $I, J \subset \{ 2, \cdots, n \}$ 
and $a_{I}$, $b_{J}$ are holomorphic functions around $x$. 
The $I$-components 
$a_{I}dz_{1}\wedge dz_{I} = z_{1}a_{I}(dz_{1}/z_{1})\wedge dz_{I}$ 
already take values in $\Omega_{X^{\Sigma}}^{k}(\log D)(-D)$. 
This implies that $\omega$ takes values in $\Omega_{X^{\Sigma}}^{k}(\log D)(-D)$ if and only if 
$b_{J}$ vanishes at $D$ for every $J$. 
Since the restriction $\omega|_{D}$ of $\omega$ to $D$ as a differential form is given by 
$\sum_{J}(b_{J}|_{D})dz_{J}$, this condition is equivalent to $\omega|_{D} \equiv 0$.  
\end{proof}
 
If we combine Claim \ref{claim: log D -D}, \eqref{eqn: FkHkc} and Corollary \ref{cor: cuspidality}, 
we obtain 
\begin{equation*}
F^{k(\alpha)}H^{k(\alpha)}_{c}(X) \simeq S_{{\la}}({\G}). 
\end{equation*}
By Lemma \ref{prop: corank filtration length}, we have $S_{{\la}}({\G})=0$ if $\alpha > g/2$. 
This finishes the proof of Proposition \ref{prop: Hodge vanish}. 
\end{proof}

\begin{remark}
Even when ${\G}$ is not neat, 
$H^{\ast}({\D}/{\G})$ still has a canonical mixed Hodge structure. 
This can be identified with the ${\G}/{\G}'$-invariant part of the mixed Hodge structure on $H^{\ast}({\D}/{\G}')$ 
for a neat subgroup ${\G}'\lhd {\G}$ (see \cite{Ma2} \S 7). 
Hence Proposition \ref{prop: Hodge vanish} is still valid in the non-neat case. 
\end{remark}

\section{The orthogonal case}\label{sec: orthogonal}

In this section we prove an analogue of Theorem \ref{thm: main} for orthogonal modular varieties. 
This is simpler than the Siegel case mainly because only $1$-dimensional cusps are involved 
and the toroidal compactification over them is unique. 
Actually this is where we started. 
Since the basic story is parallel to the Siegel case, we keep the exposition brief. 
We refer to \cite{Ma1} for a basic theory of vector-valued orthogonal modular forms.

\subsection{Orthogonal modular varieties}

Let $V$ be a ${\Q}$-linear space equipped with a rational quadratic form $( \cdot, \cdot )$ of signature $(2, n)$. 
The open set of the isotropic quadratic in ${\proj}V_{{\C}}$ defined by the condition $(v, \bar{v})>0$ consists of two connected components.  
Let ${\D}$ be one of them. 
Let ${\G}$ be a neat arithmetic subgroup of ${\rm O}(V)$. 
Then $X={\D}/{\G}$ has the structure of a smooth quasi-projective variety of dimension $n$, 
called an \textit{orthogonal modular variety}. 

The Baily-Borel compactification $X^{\ast}={\D}^{\ast}/{\G}$ is obtained by adjoining 
$0$-dimensional and $1$-dimensional cusps to ${\D}$, 
which correspond to isotropic lines $I$ and isotropic planes $J$ in $V$ respectively. 
An isotropic line $I$ determines the boundary point $[I_{{\C}}]\in {\proj}V_{{\C}}$ of ${\D}$. 
We denote by $p_{I}$ its image in $X^{\ast}$. 
An isotropic plane $J$ determines the boundary component $\mathbb{H}_{J}$ of ${\D}$ as 
the upper half plane in ${\proj}J_{{\C}} \subset {\proj}V_{{\C}}$. 
Dividing $\mathbb{H}_{J}$ by the stabilizer of $J$ in ${\G}$, 
we obtain the $J$-cusp $X_J$ in $X^{\ast}$ as a modular curve. 

We take a smooth projective toroidal compactification $X^{\Sigma}$ of $X$ with SNC boundary divisor. 
Here $\Sigma=(\Sigma_{I})_{I}$ is a collection of fans, 
one for each ${\G}$-equivalence class of $0$-dimensional cusps $I$. 
No choice is required for $1$-dimensional cusps $J$: 
it is canonical because $\dim U(J)=1$. 
Let $\pi\colon X^{\Sigma}\to X^{\ast}$ be the projection. 
For a $0$-dimensional cusp $I$, 
$\pi^{-1}(p_{I})$ is a union of smooth projective toric varieties. 
For a $1$-dimensional cusp $J$, 
$\pi^{-1}(X_J)$ is irreducible and of codimension $1$ in $X^{\Sigma}$; 
it is an abelian fibration over $X_J$. 
We denote $Y_{J}=\pi^{-1}(X_{J})$ and write $\pi_{J}\colon Y_{J}\to X_{J}$ for the projection.

\subsection{Holomorphic differential forms}

Our purpose is to study the Siegel operators for holomorphic differential forms on $X$. 
When $0<k<n/2$, there is no holomorphic $k$-form on $X$ (see \cite{Ma1} Chapter 9). 
In what follows, we assume $n/2 \leq k <n$. 
By the extension theorem of Pommerening \cite{Po}, 
every holomorphic $k$-form $\omega$ on $X$ extends holomorphically over $X^{\Sigma}$. 

In the orthogonal case, $\Omega_{{\D}}^{k}$ is already irreducible as an automorphic vector bundle. 
The Siegel operator at a $0$-dimensional cusp $I$ is trivial (\cite{Ma1} Proposition 3.8). 
On the other hand, the restriction of $\omega$ as a differential form to the irreducible components of $\pi^{-1}(p_{I})$ vanishes 
because they are rational varieties. 
Thus, from either point of view, 
only $1$-dimensional cusps are interesting as regards to the boundary behavior of holomorphic differential forms.  

Let $J$ be an isotropic plane in $V$. 
We write $V(J)=J^{\perp}/J$. 
The Siegel operator at $J$ for $\omega$ produces 
a $\wedge^{k-1}V(J)_{{\C}}$-valued cusp form $\Phi_{J}(\omega)$ of weight $k+1$ on the modular curve $X_{J}$ (\cite{Ma1} Chapter 6). 
Let $\mathcal{L}_{J}$ be the Hodge line bundle on $X_{J}$. 
We denote by $\omega|_{Y_{J}}$ the restriction of $\omega$ to $Y_{J}\subset X^{\Sigma}$ as a differential form. 
We can now state the result of this section. 

\begin{theorem}\label{thm: orthogonal}
The $k$-form $\omega|_{Y_{J}}$ takes values in the sub vector bundle 
\begin{equation}\label{eqn: sub VB orthogonal}
\pi_{J}^{\ast}\Omega^{1}_{X_{J}} \wedge \Omega^{k-1}_{Y_{J}} \simeq 
\pi_{J}^{\ast}(\Omega^{1}_{X_{J}} \otimes \pi_{J \ast}\Omega^{k-1}_{\pi_{J}}) \simeq 
\pi_{J}^{\ast}(\wedge^{k-1}V(J)_{{\C}} \otimes \mathcal{L}_{J}^{\otimes k+1}) 
\end{equation}
of $\Omega^{k}_{Y_{J}}$, and we have 
\begin{equation}\label{eqn: restriction=Siegel orthogonal}
\omega|_{Y_{J}} = \pi_{J}^{\ast} \Phi_{J}(\omega) 
\end{equation}
as sections of \eqref{eqn: sub VB orthogonal}. 
\end{theorem}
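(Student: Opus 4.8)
The plan is to run the argument of Section \ref{sec: Omega} in the present, simpler situation, where only the single isotropic plane $J$ and the canonical rank-one toroidal direction $U(J)$ intervene. First I would fix the Siegel domain realization of $\D$ at $J$,
\begin{equation*}
\D\hookrightarrow \D(J)\xrightarrow{\pi_1}\mathcal{V}_{J}\xrightarrow{\pi_2}\mathbb{H}_{J},
\end{equation*}
in which $\pi_2$ is an affine bundle with $(n-2)$-dimensional fibres modelled on $V(J)_{\C}$ and $\pi_1$ is a principal $U(J)_{\C}\simeq\C$-bundle. As in \eqref{eqn: Siegel domain filtration} this gives the three-step filtration $\pi_1^{\ast}\pi_2^{\ast}\Omega^1_{\mathbb{H}_{J}}\subset \pi_1^{\ast}\Omega^1_{\mathcal{V}_{J}}\subset\Omega^1_{\D}$, with successive quotients of ranks $1$ (the base direction), $n-2$ (the abelian direction $\simeq V(J)_{\C}$) and $1$ (the central $U(J)$-direction). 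Since $\dim U(J)=1$, the toroidal boundary $Y_J$ is cut out by the vanishing of the single torus coordinate $q$, and the $U(J)$-direction is exactly the transverse $dq/q$-direction. By Pommerening's theorem \cite{Po} the form $\omega$ extends holomorphically over $X^{\Sigma}$, so restricting it as a differential form to $Y_J$ annihilates the $U(J)$-direction; as in Lemma \ref{lem: descend section to VI} and Corollary \ref{cor: restriction=pullback k-form}, $\omega$ restricted to the boundary of $\overline{\mathcal{X}(J)}$ is the pullback of a $\overline{\Gamma(J)}$-invariant $k$-form $\omega_J$ on $\mathcal{V}_{J}$.

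The heart of the proof is the orthogonal analogue of Proposition \ref{prop: ElaI initial filter} and Lemma \ref{prop: Leray level}, namely that $\omega_J$ has pure holomorphic Leray level $1$ for $\pi_2$. Writing $\Omega^1_{\D}\simeq\mathcal{L}\otimes\mathcal{W}$, where $\mathcal{L}$ is the Hodge line bundle on $\D$ and $\mathcal{W}=\mathcal{L}^{\perp}/\mathcal{L}$ is the rank-$n$ self-dual bundle carrying the quadratic form, one has $\Omega^k_{\D}\simeq\mathcal{L}^{\otimes k}\otimes\wedge^k\mathcal{W}$; the plane $J$ equips $\mathcal{W}$ with the three-step filtration above, with graded pieces a line (base), $V(J)_{\C}$ (abelian) and a line (central). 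Via the boundary trivialization I would compute the $U(J)_{\C}$-invariant part of $\wedge^k\mathcal{W}$ --- the analogue of the invariants in \eqref{eqn: U-inv rep} and Proposition \ref{prop: Fourier U-invariant} --- and show, by a highest-weight vector together with a torus-weight argument as in Lemma \ref{prop: Leray level}, that it lies in $L^1\Omega^k_{\mathcal{V}_{J}}=\pi_2^{\ast}\Omega^1_{\mathbb{H}_{J}}\wedge\Omega^{k-1}_{\mathcal{V}_{J}}$ and injects into the associated graded. Concretely, this invariant part is the base line tensored with $\wedge^{k-1}V(J)_{\C}$: the central line is annihilated upon restriction, and the torus weight forces the base line to be used exactly once. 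I expect this representation-theoretic step to be the main obstacle, though it is lighter than its Siegel counterpart, since $\Omega^k_{\D}$ is already irreducible and no passage to a Schur component $\E_{\lambda}^{I}\subset\Omega^k$ is required.

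Granting the level computation, the descent is the specialization of Lemma \ref{lemma: GrL}. Pushing $\omega_J$ down by the lattice $\overline{W(J)}_{\Z}$ produces the abelian fibration $\pi_J\colon Y_J\to X_J$, whose fibres are abelian varieties, so $\Omega^{k-1}_{\pi_J}\simeq\pi_J^{\ast}\pi_{J\ast}\Omega^{k-1}_{\pi_J}$. As $X_J$ is a curve we have $L^2=0$, hence $L^1={\rm Gr}^{1}_{L}$ and
\begin{equation*}
L^1\Omega^k_{Y_J}\simeq\pi_J^{\ast}\bigl(\Omega^1_{X_J}\otimes\pi_{J\ast}\Omega^{k-1}_{\pi_J}\bigr),
\end{equation*}
which is the first isomorphism in \eqref{eqn: sub VB orthogonal}. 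Identifying the relative Hodge bundle $\pi_{J\ast}\Omega^1_{\pi_J}\simeq V(J)_{\C}\otimes\mathcal{L}_J$ and using the Kodaira--Spencer isomorphism $\Omega^1_{X_J}\simeq\mathcal{L}_J^{\otimes 2}$ then gives $\Omega^1_{X_J}\otimes\pi_{J\ast}\Omega^{k-1}_{\pi_J}\simeq\wedge^{k-1}V(J)_{\C}\otimes\mathcal{L}_J^{\otimes k+1}$, which is the second isomorphism.

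Finally, to prove \eqref{eqn: restriction=Siegel orthogonal} I would compare $\omega_J$ with $\Phi_J(\omega)$ through their Fourier--Jacobi expansions, exactly as in Proposition \ref{prop: Siegel operator}. The terms of $\omega$ surviving restriction to $Y_J=\{q=0\}$ are precisely those constant along $U(J)$, and under the identification of ${\rm Gr}^{1}_{L}\Omega^k_{Y_J}$ with $\pi_J^{\ast}(\wedge^{k-1}V(J)_{\C}\otimes\mathcal{L}_J^{\otimes k+1})$ these are the coefficients defining the weight-$(k+1)$, $\wedge^{k-1}V(J)_{\C}$-valued Siegel operator $\Phi_J(\omega)$ of \cite{Ma1} Chapter 6. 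This yields $\omega|_{Y_J}=\pi_J^{\ast}\Phi_J(\omega)$ and completes the proof.
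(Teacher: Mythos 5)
Your overall architecture matches the paper's: Siegel domain realization at $J$, the induced filtration on $\Omega^{k}_{{\D}}$, restriction to the boundary divisor, and descent through the abelian fibration $\pi_{J}$. Your verification of the isomorphisms in \eqref{eqn: sub VB orthogonal} (translation-invariance of forms on the abelian fibres, $\pi_{J\ast}\Omega^{1}_{\pi_{J}}\simeq V(J)_{{\C}}\otimes\mathcal{L}_{J}$, Kodaira--Spencer, and $L^{2}=0$ since $X_{J}$ is a curve) is exactly what the paper does. The difference is that at the crucial step the paper does not redo the representation theory: it identifies the filtration \eqref{eqn: filtration Omega1 orthogonal} with the $J$-filtration of \cite{Ma1} Chapter 8, observes that its last step is $\pi_{2}^{\ast}\Omega^{1}_{\mathbb{H}_{J}}\wedge\Omega^{k-1}_{\mathcal{V}_{J}}$ (\cite{Ma1} Example 8.8), and then quotes the theory of Siegel operators (\cite{Ma1} Theorem 6.1) for the facts that the boundary restriction of $\omega$ lands in that last step and equals $\pi_{2}^{\ast}\Phi_{J}(\omega)$. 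You propose instead to prove these facts from scratch, and that is where your sketch has a genuine gap.

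First, the group is wrong as stated. In the paper's notation $U(J)\simeq\wedge^{2}J$ is the one-dimensional \emph{center} of the unipotent radical $W(J)$ of the stabilizer of $J$; it is contained in $U(I)=(I^{\perp}/I)\otimes I$ and acts trivially on the $I$-trivialized fibre $U(I)^{\vee}_{{\C}}$ of $\Omega^{1}_{{\D}}$. Hence the ``$U(J)_{{\C}}$-invariant part of $\wedge^{k}\mathcal{W}$'' is all of $\wedge^{k}\mathcal{W}$, and the computation you describe would yield no constraint at all. The correct analogue of $U(I,I_{0})$ from \S\ref{ssec: Fourier coefficient} is the unipotent radical of the stabilizer of the isotropic line $J/I$ in ${\rm O}((I^{\perp}/I)_{{\C}})$ (equivalently, the image of $W(J)_{{\C}}$ in ${\rm GL}((I^{\perp}/I)_{{\C}})$); its invariants in $\wedge^{k}(I^{\perp}/I)_{{\C}}$ are indeed $(J/I)_{{\C}}\wedge\wedge^{k-1}(J^{\perp}/J)_{{\C}}$, of rank $\binom{n-2}{k-1}$, which is the answer you announce. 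Second, even with the right group, the substantive input is missing: one must show that the Fourier--Jacobi coefficients of $\omega$ surviving restriction to $\{q_{J}=0\}$ actually \emph{lie} in this invariant subspace. This is the orthogonal analogue of Weissauer's Proposition \ref{prop: Fourier U-invariant} and of Lemma \ref{lem: MF at boundary}; it follows from the transformation law of $\omega$ under $W(J)_{{\Z}}$ and a limiting argument, and is precisely what the paper imports from \cite{Ma1}. You assert it by analogy but do not prove it. With the group corrected and this invariance statement supplied (either by proof or by citing \cite{Ma1}), the rest of your argument goes through.
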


\begin{proof}
We first recall the structure of $\pi_{J}$ (cf.~\cite{Ma1} Chapter 5) 
and verify the isomorphism \eqref{eqn: sub VB orthogonal}. 
We choose a line $I\subset J$ and put $U(I)=(I^{\perp}/I)\otimes I$. 
The projection from the point $[I_{{\C}}]\in {\proj}V_{{\C}}$ realizes ${\D}$ as the tube domain in $U(I)_{{\C}}$. 
Then the linear projections $I^{\perp}/I \to I^{\perp}/J \to I^{\perp}/J^{\perp}$ induce a two-step fibration 
\begin{equation}\label{eqn: Siegel domain orthogonal}
{\D} \stackrel{\pi_1}{\to} \mathcal{V}_{J} \stackrel{\pi_2}{\to} \mathbb{H}_{J}, 
\end{equation}
where $\pi_1$ is a fibration of upper half planes and 
$\pi_2$ is an affine space bundle isomorphic to $V(J)_{{\C}}\otimes \mathcal{O}(1)$. 
This is the Siegel domain realization of ${\D}$ with respect to $J$. 
It is in fact independent of the choice of $I$. 

Let ${\G}(J)$ be the stabilizer of $J$ in ${\G}$. 
Dividing the $\pi_{1}$-fibers by a rank $1$ abelian subgroup $U(J)_{{\Z}}$ of ${\G}(J)$, 
we get a punctured-disc fibration $\mathcal{X}(J)\to \mathcal{V}_{J}$. 
The partial toroidal compactification $\overline{\mathcal{X}(J)}$ of $\mathcal{X}(J)$ is obtained 
by filling the origins of the punctured discs. 
The boundary divisor is identified with $\mathcal{V}_{J}$. 
Then $\pi_{J}\colon Y_J \to X_J$ is the quotient of 
$\pi_2 \colon \mathcal{V}_{J} \to \mathbb{H}_{J}$ by ${\G}(J)$. 
The $\pi_{J}$-fibers are quotients of the $\pi_{2}$-fibers by the translation by a lattice. 
This shows that 
\begin{equation*}
\Omega_{\pi_{J}}^{1}\simeq \pi_{J}^{\ast}\pi_{J \ast}\Omega_{\pi_{J}}^{1} 
\simeq \pi_{J}^{\ast} (V(J)_{{\C}}\otimes \mathcal{L}_{J}). 
\end{equation*}
Taking the exterior power and using $\Omega_{X_{J}}^{1} \simeq \mathcal{L}_{J}^{\otimes 2}$, 
we obtain \eqref{eqn: sub VB orthogonal}. 

The Siegel domain realization \eqref{eqn: Siegel domain orthogonal} defines the filtration 
\begin{equation}\label{eqn: filtration Omega1 orthogonal}
\pi_{1}^{\ast}\pi_{2}^{\ast}\Omega_{\mathbb{H}_{J}}^{1} \subset \pi_{1}^{\ast}\Omega_{\mathcal{V}_{J}}^{1} \subset \Omega_{{\D}}^{1} 
\end{equation} 
on $\Omega_{{\D}}^{1}$.  
The description of \eqref{eqn: Siegel domain orthogonal} as linear projections shows that 
via the $I$-trivialization 
$\Omega_{{\D}}^{1}\simeq U(I)_{{\C}}^{\vee}\otimes \mathcal{O}_{{\D}}$, 
the filtration \eqref{eqn: filtration Omega1 orthogonal} corresponds to the constant filtration 
$J/I \subset J^{\perp}/I \subset I^{\perp}/I$ on $I^{\perp}/I=(I^{\perp}/I)^{\vee}$ 
twisted by $I^{\vee}$. 
This implies that \eqref{eqn: filtration Omega1 orthogonal} coincides with 
the ``$J$-filtration'' considered in \cite{Ma1} Chapter 8. 
This induces the filtration 
\begin{equation}\label{eqn: filtration Omegal orthogonal}
\pi_{1}^{\ast}(\pi_{2}^{\ast}\Omega_{\mathbb{H}_{J}}^{1}\wedge \Omega_{\mathcal{V}_{J}}^{k-1}) 
\subset \pi_{1}^{\ast}\Omega_{\mathcal{V}_{J}}^{k} \subset \Omega_{{\D}}^{k} 
\end{equation}
on $\Omega_{{\D}}^{k}$. 
The sub vector bundle 
$\pi_{1}^{\ast}(\pi_{2}^{\ast}\Omega_{\mathbb{H}_{J}}^{1}\wedge \Omega_{\mathcal{V}_{J}}^{k-1})$ 
coincides with last step of the $J$-filtration on $\Omega_{{\D}}^{k}$ (cf.~\cite{Ma1} Example 8.8). 

We pass to the canonical extension 
$\Omega^{k}_{\overline{\mathcal{X}(J)}}(\log \mathcal{V}_{J})$ 
of $\Omega^{k}_{\mathcal{X}(J)}$ 
and take the restriction to the boundary divisor $\mathcal{V}_{J}$. 
Then \eqref{eqn: filtration Omegal orthogonal} induces the filtration 
\begin{equation*}
\pi_{2}^{\ast}\Omega_{\mathbb{H}_{J}}^{1}\wedge \Omega_{\mathcal{V}_{J}}^{k-1} 
\subset \Omega_{\mathcal{V}_{J}}^{k} 
\subset \Omega^{k}_{\overline{\mathcal{X}(J)}}(\log \mathcal{V}_{J})|_{\mathcal{V}_{J}}. 
\end{equation*}
Here the second inclusion is obtained by pulling back $k$-forms by $\overline{\mathcal{X}(J)}\to \mathcal{V}_{J}$ 
and restricting them again to $\mathcal{V}_{J}$ as sections of $\Omega_{\overline{\mathcal{X}(J)}}^{k}$. 
Restriction of $k$-forms to $\mathcal{V}_{J}$ as differential forms is given by 
a projection from 
$\Omega^{k}_{\overline{\mathcal{X}(J)}}|_{\mathcal{V}_{J}}$ 
to this \textit{sub} vector bundle. 

Now, by the theory of Siegel operators in \cite{Ma1}, 
the restriction of $\omega$ to the boundary divisor $\mathcal{V}_{J}$ of $\overline{\mathcal{X}(J)}$ 
as a section of the canonical extension 
$\Omega^{k}_{\overline{\mathcal{X}(J)}}(\log \mathcal{V}_{J})$ 
takes values in the last step of the $J$-filtration. 
As noted above, this is 
$\pi_{2}^{\ast}\Omega_{\mathbb{H}_{J}}^{1}\wedge \Omega_{\mathcal{V}_{J}}^{k-1}$. 
Since it is contained in $\Omega_{\mathcal{V}_{J}}^{k}$, 
we find that $\omega$ extends holomorphically over $\overline{\mathcal{X}(J)}$ (as guaranteed by the theorem of Pommerening) 
and that the restriction of $\omega$ to $\mathcal{V}_{J}$ as a differential form already coincides with the restriction as a section of 
$\Omega^{k}_{\overline{\mathcal{X}(J)}}$. 
We can denote it unambiguously by $\omega|_{\mathcal{V}_{J}}$. 
Then we have 
$\omega|_{\mathcal{V}_{J}} = \pi_{2}^{\ast}\Phi_{J}(\omega)$ 
by \cite{Ma1} Theorem 6.1. 
\end{proof}

As in the Siegel case, the equality \eqref{eqn: restriction=Siegel orthogonal} ensures that 
Corollary \ref{cor: intro} holds also in the orthogonal case.

\begin{remark}\label{remark: Pommerening orthogonal}
The above argument contains a direct proof of 
the Pommerening extension theorem over $1$-dimensional cusps. 
We can also give a direct proof over $0$-dimensional cusps as follows. 
We prove vanishing of the residues along the toroidal boundary strata inductively. 
The starting case is codimension $k$ strata. 
The residues there are components of the constant term of the Fourier expansion of $\omega$, 
which vanishes by \cite{Ma1} Proposition 3.8. 
Then induction proceeds because the closure of a stratum is a smooth projective toric variety 
which has no holomorphic forms of positive degree. 
\end{remark}


\end{document}